\documentclass[11pt]{article}
\usepackage[latin1]{inputenc}
\usepackage{latexsym,amsmath,amsfonts}
\textheight22cm
\textwidth16cm
\hoffset=-1cm
\voffset=-2cm

\begin{document}

\def\N{{\Bbb N}}
\def\Z{{\Bbb Z}}
\def\R{{\Bbb R}}
\def\T{{\Bbb T}}
\def\C{{\Bbb C}}
\newcommand{\finishproof}{\hfill $\Box$ \vspace{5mm}}

\def\qedbox{$\rlap{$\sqcap$}\sqcup$}
\def\skipaline{\removelastskip\vskip12pt plus 1pt minus 1pt}

\def\Proof{\removelastskip\skipaline
\noindent \it Proof.  \rm}


\newtheorem{Theorem}{Theorem}[section]
\newtheorem{Lemma}{Lemma}[section]
\newtheorem{Proposition}{Proposition}[section]
\newtheorem{Corollary}{Corollary}[section]
\newtheorem{Remark}{Remark}[section]
\newtheorem{Example}{Example}[section]
\newtheorem{Definition}{Definition}[section]

\renewcommand{\theequation}{\thesection.\arabic{equation}}

\newcommand{\sss}{\smallskip}
\newcommand{\ms}{\medskip}
\newcommand{\bs}{\bigskip}
\newcommand{\ab}[1]{|{\mbox det}( #1 )|}
\newcommand{\sgn}{{\rm Sgn\,    }}
\newcommand{\qed}{\nolinebreak\hfill\rule{2mm}{2mm}
\par\medbreak}
\newcommand{\proof}{\par\medbreak\it Proof: \rm}
\newcommand{\rem}{\par\medbreak\it Remark: \rm}
\newcommand{\defi}{\par\medbreak\it Definition : \rm}
\newcommand{\la}{\langle }
\newcommand{\ra}{\rangle }
\newcommand{\ko}{\langle k,     \omega\rangle }
\newcommand{\gep}{\;>{\rm c}\; }
\newcommand{\kth}{e^{{\rm i}\langle k,     \theta\rangle}\; }
\newcommand{\kx}{e^{{\rm i}\langle k,     x\rangle}\; }
\newcommand{\nx}{e^{{\rm i}\langle n,     x\rangle}\; }
\newcommand{\beq}{\begin{equation} }
\newcommand{\eeq}{\end{equation} }
\newcommand{\I}{\Cal I}
\newcommand{\J}{\Cal J}


\textheight220mm \textwidth150mm \hoffset-1.2cm \voffset-1cm


\setlength{\columnsep}{5pt}

\title{Rigidity of the Reducibility of Gevrey \\
Quasi-periodic Cocycles on $U(n)$ }
\author{\\ Xuanji Hou\footnote{\footnotesize X. H. was supported by National Natural Science  Foundation of
China (Grant 11001121) and  Program for New Century Excellent Talents in University  (Grant NCET-12-0869).}
\quad  and\ \  
\  Georgi Popov\footnote{\footnotesize G.P. was partially supported by Agence Nationale de la Recherche, Projet DYNPDE: ANR-2010-BLAN-102-03 }\\
 }


\date{\today}
\maketitle

\begin{abstract}
We consider the reducibility problem of 
cocycles $(\alpha,A)$ on $\T^d\times U(n)$ in Gevrey classes, where $ \alpha$
is a Diophantine vector. We prove that, if a Gevrey  cocycle is conjugated
to a constant cocycle $(\alpha,C)$ by a suitable measurable conjugacy
$(0,B)$, then  for almost all $C$ it can be conjugated to
$(\alpha,C)$ in the same Gevrey class, provided that $A$ is
sufficiently close to a constant. If $B$  is continuous  we obtain it is  Gevrey smooth. We consider as well the global problem of  reducibility in Gevrey classes when  $d=1$.
\end{abstract}

\tableofcontents

\section{Introduction}
This article is concerned with the reducibility of cocycles in Gevrey classes on the unitary group $U(n)$. A cocycle on $U(n)$ is a diffeommorphisms of $\T^d \times U(n)$, $\T^d$ being the  torus $\T^d=\R^d/ \Z^d$,   given by the skew-product
\begin{eqnarray*}
(\alpha,A):&& \T^d \times \C^n\rightarrow \T^d \times \C^n \\
&& (\theta,v)\mapsto (\theta+\alpha, A(\theta)v),
\end{eqnarray*}
where $\alpha\in \T^d$ and $A:\T^d \to U(n)$ is a map. The corresponding dynamics is defined by the iterates of the cocycle by composition $(\alpha,A)^n$, $n\in\Z$. 
We denote by
 $C^r(\T^d,U(n))$
($r=0,1,\cdots,\infty,\omega$)  the set of all $C^r$ functions $A$. For any $\rho\geq 1$ and $L>0$ we denote  by
${\mathcal G}_L^\rho(\T^d,U(n))$ the class of Gevrey-${\mathcal G}^\rho$ functions with an exponent $\rho$ and Gevrey constant $L$. A map $A\in
C^\infty(\T^d,U(n))$ belongs to that class if it satisfies \eqref{eq:gevrey} (see Section \ref{sec:gevrey-functions}).
Denote by  $SW_\rho^{\mathcal{G}}(\T^d, U(n))$ ($SW^{r}(\T^d, U(n))$),
the set of all Gevrey-$\mathcal{G}^\rho$ ($C^r$) quasi-periodic cocycles on  $U(n)$.

The dynamics is particularly simple if $(\alpha,A)$ is a constant cocycle. 
The cocycle $(\alpha,A)$ is said to be constant
if $A$ is a constant matrix.
Two cocycles $(\alpha,A),(\alpha,\widetilde{A})\in
SW^r(\T^d, U(n))$ are  said to be  conjugated if there exists
 $B:\T^d\rightarrow U(n)$ such that
\begin{eqnarray*}
Ad(B).(\alpha,A) := (\alpha,B(\cdot+\alpha)^{-1} A
B)=(\alpha,\widetilde{A}),
\end{eqnarray*}
which means that
$B(\theta+\alpha)^{-1}A(\theta)B(\theta)=\widetilde{A}(\theta)$ for any $\theta\in \T^d$.
The cocycle  $(\alpha, A)$ is said to be
 reducible if it is conjugated to a constant one. We say also  that the conjugation or the reducibility is Gevrey-${\mathcal{G}^\rho}$,
 $C^r$, or measurable, if $B$ belongs to the corresponding class of  functions.

Reducibility problem of cocycles  has been investigated for a long time.
 The local reducibility problem (the cocycle is close to a constant
one) is usually studied using  KAM-type iterations. In particular, Eliasson's KAM method developed in \cite{E}
has been fruitfully used to obtain full-measure reducibility for
generic one-parameter families of cocycles \cite{C1, El01, Kr99a,
Kr99b, HeY, HY2}. The global reducibility problem (cocycles are no longer
 close to a constant one) has been studied by Avila,
Krikorian and others.  By means of a  renormalization scheme Krikorian obtained a global  density
result for $C^\infty$ cocycles on $SU(2)$ \cite{Kr1} and also results
for  cocycles on $SL(2,\R)$ \cite{AK,Kr2}. Almost reducibility for Gevrey cocycles has been studied by Chavaudret in \cite{C1}.\\

The \textit{rigidity problem} we are interested in, can be
formulated as follows. Suppose that
 a  Gevrey-${\mathcal{G}^\rho}$  cocycle is measurably  reducible. Is it also Gevrey-${\mathcal{G}^\rho}$ reducible?
In the case of $C^\infty$ or $C^\omega$ cocycles the rigidity problem has been investigated in
 \cite{AK, Kr2, HY1, HY2}.\\

In this paper, we will focus our attention on the Gevrey case. We will prove  a local
 rigidity result of reducibility in Gevrey classes  which can be viewed as a Gevrey analogue of  the main result in \cite{HY1}.
 To this end we use  techniques developed in \cite{P}.
  When $d=1$, the local result together with
 Krikorian's renormalization scheme   imply as in \cite{Kr1, AK} a global rigidity result    for
 Gevrey quasi-periodic cocycles on
$\T^1\times U(n)$. 

Why are we interested in Gevrey classes? 
Gevrey classes appear  naturally in the KAM theory when dealing with Diophantine frequencies \cite{P1, P}. They provide a natural framework for  studying KAM systems,   Birkhoff normal forms with an exponentially small reminder terms and the Nekhoroshev theory, and give an inside relation between these theories  \cite{M-S,M-P,P1,P}. One can consider as well the more general  Roumieu classes of non-quasi-analytic functions. 
In the  case of Bruno-R\"ussmann arithmetic conditions we suggest that similar results hold in appropriate Roumieu spaces. 

To formulate the main results we recall certain arithmetic conditions.
Given $\gamma>0$ and $\tau>d-1$, we say that $\alpha\in \R^d$ is $(\gamma,\tau)$-Diophantine if
\begin{eqnarray}\label{eq:sdc}
|e^{2\pi i\langle k, \alpha\rangle}-1 | >\frac{\gamma^{-1}}{|k|^\tau},
\quad 0\neq k\in \Z^d,
\end{eqnarray}
and we
 denote by  ${\rm DC\, }(\gamma,\tau)$ the set of all such Diophantine
vectors. Hereafter, $i:=\sqrt{-1}$ stands for the imaginary unit. It
is well known that ${\rm DC\, }(\tau):=\bigcup_{\gamma>0}{\rm DC\, }(\gamma,\tau)$ is
a set of full Lebesgue  measure. For any given $\alpha\in \R^d$, we
denote by  $\Upsilon(\alpha;\chi,\nu)$ the set of all vectors
$(\phi_1,\cdots,\phi_n)\in \R^n$, satisfying
\begin{eqnarray}\label{upsilon-def}
|\langle k,\alpha\rangle +\phi_p-\phi_q-j|\geq
\frac{\chi}{(1+|k|)^\nu}
\end{eqnarray}
for any $p\neq q\in \{1,2,\cdots,n\}$, 
 $k\in \Z^d$ and $j\in \Z$.
The set
$$
\Upsilon(\alpha):=\bigcup_{\chi,\nu>0}\Upsilon(\alpha;\chi,\nu)
$$
 has full Lebesgue measure in $\R^n$.
 Recall that the Lie group $U(n)$ consists  of all $A\in
GL(n,\C)$ satisfying $A^*A=I$. Hereafter, $I$ stands for the identity
matrix and $A^*$ is the adjoint matrix to $A$ in $M_n=M_n(\C)$.
The corresponding Lie algebra $u(n)$
 is the set of  $X\in gl(n,\C)$ satisfying $X^*+X=0$. Any
$A\in U(n)$ is diagonalizable, and the set of eigenvalues of $A$, denoted
by $ {\rm Spec\, }(A)$, is a subset of $\{z\in\C:|z|=1\}$. Denote by
$\Sigma(\alpha;\chi,\nu)$ the set of $A\in U(n)$ with spectrum
$ {\rm Spec\, }(A):=\{\lambda_1, \lambda_2,\cdots, \lambda_n\}$ satisfying
\begin{eqnarray}
|\lambda_p-\lambda_q e^{2\pi i \langle k,\alpha\rangle}|\geq
\frac{\chi}{(1+|k|)^\nu}
\end{eqnarray}
for any $p\neq q\in \{1,2,\cdots,n\}$ and $k\in \Z^d$.
 Let
$\Sigma(\alpha)=\bigcup_{\chi,\nu>0}\Sigma(\alpha;\chi,\nu)$. It is
obvious that $A\in \Sigma(\alpha)$ if and only if
$$ {\rm Spec\, }(A)=\{e^{2\pi i\varrho_1},e^{2\pi i\varrho_2},\cdots,e^{2\pi i\varrho_n}\}$$
with $(\varrho_1,\varrho_2,\cdots,\varrho_n)\in \Upsilon(\alpha)$. 

In Section \ref{sec:measurable-functions} we assign to any measurable map
$B: \T^d\to M_n$ a number $\lceil B \rfloor$  which evaluates  the distance from $B$ to the set of ``totally degenerate maps''.
A measurable maps $C:\T^d \to M_n$ will be called {\em totally degenerate} if there exist constant matrices $S,T\in U(n)$ such that the first row of the matrix $SC(\theta)T$ is zero for a.e. $\theta\in \T^d$. We say that $B:\T^d\to M_n$ is $\epsilon$-non-degenerate if $\lceil B \rfloor\ge \epsilon $.

We are going to state  the  main results of the article. 

\begin{Theorem}\label{Local} Let $\rho>1$ and $(\alpha, Ae^{G})\in SW_\rho^{\mathcal{G}}(\T^d,U(n))$, where $\alpha\in DC(\gamma,\tau)$, $A\in U(n)$ is a constant matrix and
$G\in \mathcal{G}_L^\rho(\T^d,u(n))$. Then for any $\ell>1$ there is a positive constant $\delta=\delta(d,n,\rho,L,\gamma,\tau,
\ell)$ such that for any $\epsilon \in (0,1]$ the following holds.
If the cocycle
$(\alpha, Ae^{G})$ is conjugated to a constant cocycle $(\alpha,C)$ with  $C\in \Sigma(\alpha)$
 by a measurable map $B:\T^d\rightarrow U(n)$
where  
\begin{equation}
\lceil B^*\rfloor\geq \epsilon>0 \quad \mbox{and}\quad \|G\|_L<\delta \epsilon^\ell, 
\label{eq:small-condition-epsilon}
\end{equation}
then  $(\alpha,
Ae^{G})$ can be conjugated to $(\alpha,C)$ by a Gevrey map
$\widetilde B\in \mathcal{G}^{\rho}(\T^d,
U(n))$ in the same Gevrey class. Moreover,
$\widetilde{B}(\theta)=B(\theta)$ for a.e. $\theta\in\T^d$, which
implies that $B$ is a $\mathcal{G}^{\rho}$ map if it is
continuous.
\end{Theorem}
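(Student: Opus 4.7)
My plan is to follow the scheme of the analytic proof in \cite{HY1}, replacing analytic tools with the Gevrey machinery of \cite{P}. The strategy has three ingredients: (i) a Gevrey almost-reducibility iteration for $(\alpha,Ae^G)$, (ii) a rigidity argument for measurable conjugations to constants in $\Sigma(\alpha)$ that excludes resonances along the scheme, and (iii) an a.e.\ uniqueness statement saying that two measurable conjugations to the same $C\in\Sigma(\alpha)$ differ by a constant commuting with $C$.

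First, I would set up a Gevrey KAM iteration. Since $\alpha\in DC(\gamma,\tau)$, $A\in U(n)$ is constant and $\|G\|_L$ is small, one produces inductively Gevrey conjugations $\Phi_N\in\mathcal{G}^{\rho}(\T^d,U(n))$, with controlled Gevrey constants $L_N$, such that
\[
\Phi_N^{-1}\cdot (\alpha,Ae^{G})\cdot \Phi_N=(\alpha,A_N e^{G_N}),
\]
where $A_N\in U(n)$ is constant and $\|G_N\|_{L_N}$ decays super-exponentially. Each step solves a linearized cohomological equation with a carefully chosen Fourier truncation scale $K_N$, so that the exponential Gevrey weights are only mildly degraded; this is the key technical device of \cite{P}. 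In the absence of resonances between the spectrum of $A_N$ and $\alpha$, the scheme converges to an exact Gevrey conjugation $\Phi_\infty$.

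Second, the measurable conjugation $B$ is transported along the scheme by setting $B_N:=\Phi_N^{-1}B$, which is a measurable conjugation from $(\alpha,A_N e^{G_N})$ to $(\alpha,C)$. Because $\Phi_N$ takes values in $U(n)$, the non-degeneracy is preserved: $\lceil B_N^*\rfloor\ge \epsilon$. Writing $B_N(\cdot+\alpha)\,C=A_N e^{G_N(\cdot)}B_N(\cdot)$ in a basis diagonalizing $C$ and expanding in Fourier modes, the spectral condition $C\in\Sigma(\alpha;\chi,\nu)$ forces strong algebraic constraints on the Fourier coefficients of $B_N$: off-diagonal entries have no resonant mode, and diagonal ones have only $k=0$. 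A quantitative rigidity lemma then asserts that if $\|G_N\|$ is much smaller than an appropriate power of $\epsilon$ times the spectral gap of $C$, the only measurable solution with $\lceil B_N^*\rfloor\ge \epsilon$ requires $G_N\equiv 0$ and $A_N$ unitarily similar to $C$ by a constant $U\in U(n)$. The role of the hypothesis $\|G\|_L<\delta\epsilon^{\ell}$ with $\ell>1$ is to guarantee that after finitely many KAM steps one enters the regime in which this lemma applies, the extra exponent $\ell-1$ absorbing the geometric loss in the iteration.

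Third, once one reaches a level $N$ with $G_N=0$ and $A_N=UCU^{-1}$, the Gevrey map $\widetilde B:=\Phi_N U$ conjugates $(\alpha,Ae^{G})$ to $(\alpha,C)$ exactly and lies in $\mathcal{G}^{\rho}(\T^d,U(n))$. For the a.e.\ identification $\widetilde B=B$, the product $Z:=B\widetilde B^{-1}$ is measurable and satisfies $Z(\cdot+\alpha)=C Z(\cdot)C^{-1}$ a.e.; Fourier analysis in the diagonalizing basis of $C$ combined with $C\in\Sigma(\alpha)$ and $\alpha\in DC(\gamma,\tau)$ gives $\widehat Z(k)=0$ for $k\ne 0$ and $\widehat Z(0)$ diagonal, so $Z$ is a.e.\ a constant commuting with $C$; absorbing it into $\widetilde B$ yields $\widetilde B=B$ a.e., from which continuity of $B$ upgrades automatically to $\mathcal{G}^{\rho}$. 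The main obstacle is precisely the quantitative rigidity step: coupling the non-degeneracy parameter $\epsilon$ with the KAM smallness $\|G_N\|$ so that the measurable cocycle equation, read through the Diophantine gaps of $C$, forces \emph{exact} reducibility rather than only approximate reducibility, and doing so with Gevrey-admissible loss at each iterate.
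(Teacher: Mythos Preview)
Your outline has the right three-part architecture, but the middle step contains a genuine error that would make the argument fail. You assert that because $\Phi_N$ takes values in $U(n)$, the non-degeneracy is preserved: $\lceil B_N^*\rfloor\ge\epsilon$ for $B_N=\Phi_N^{-1}B$. This is false. The quantity $\lceil\cdot\rfloor$ is invariant under multiplication by \emph{constant} unitaries, but not under multiplication by $U(n)$-valued \emph{functions}; the Fourier coefficients of $B_N$ are convolutions and can be spread out arbitrarily. This is precisely the obstacle the paper is built around. The paper introduces the sets $\Gamma(N,\epsilon)$ and $\Pi(\widetilde N,\xi,\varepsilon)$ to track, not preservation, but the controlled \emph{degradation} of $\lceil\cdot\rfloor$ under the KAM conjugations: each resonant step contributes a factor $1/n$ plus a small additive loss, so after $m$ steps one only has $B^*R^{(m)}\in\Gamma(N_*+L_m,\epsilon/(4n^m))$. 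The hypothesis $\|G\|_L<\delta\epsilon^\ell$ ensures $\varepsilon_m$ beats $\epsilon/(4n^m)$, and this degraded non-degeneracy is what forces $A_m\in{\rm NR}(N_m,\varepsilon_m^\sigma)$ for large $m$.

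Second, your termination mechanism is not the paper's and is not correct as stated: there is no finite step at which $G_N\equiv 0$ exactly. The paper never claims this. Instead, once $A_m$ is non-resonant the KAM conjugation $R_m=e^{Y_m}$ is $\varepsilon_m^{1-2\sigma}$-close to the identity, so the products $R^{(m)}$ form a Cauchy sequence and the Inverse Approximation Lemma yields a limit $R\in\mathcal{G}^\rho$ conjugating $(\alpha,Ae^G)$ to some constant $(\alpha,\widetilde A)$. The identification of $\widetilde B$ with $B$ a.e.\ then proceeds by analyzing $V=RB^{-1}$, which conjugates $(\alpha,C)$ to $(\alpha,\widetilde A)$; here $\widetilde A$ need not equal $C$, and $V$ turns out to be a constant times a diagonal character $\theta\mapsto\exp(2\pi i\,{\rm diag}(\langle k^{(j)},\theta\rangle))$ rather than a pure constant. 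Your uniqueness argument for $Z$ is essentially right once one actually has a Gevrey $\widetilde B$ conjugating to $C$ itself, but getting there requires the convergence argument, not a finite-step exact reduction.
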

Making use of the above local result and of  the 
renormalization we obtain a global rigidity result. The renormalization scheme we apply in this paper has been  developed by Krikorian   and  it is  often used when  studying  the global properties  of 1-dimensional quasi-periodic cocycles \cite{AK,HY2,Kr1,Kr2}. 
To formulate the the global result in the case $d=1$  we need the following arithmetic condition on $\alpha$ involving  the Gauss map $G: (0,1)\to (0,1)$, where $G(x)=\{x\}^{-1}$ and $\{x\}$ stands for the fractional part of $x$. We denote by ${\rm RDC\, }(\gamma,\tau)$ the set of all irrational $\alpha\in (0,1)$ such that  $G^m(\alpha)$ belongs to ${\rm DC\, }(\gamma,\tau)$ for infinitely many $m\in\N$.  It can be shown that ${\rm RDC\, }(\gamma,\tau)$ is of full Lebesgue  measure in $(0,1)$ as long as ${\rm DC\, }(\gamma,\tau)$ is of positive measure \cite{AK}.  We set as well ${\rm RDC\, }=\bigcup_{\gamma,\tau>0}{\rm RDC\, }(\gamma,\tau)$. 

The  global result is stated as follows.

\begin{Theorem}\label{Gloabl} For any $\alpha\in {\rm RDC\, }$,  if $(\alpha, A)\in SW_\rho^{\mathcal{G}}(\T^1,U(n))$
 is conjugated to a constant cocycle $(\alpha,C)$ with
$C\in \Sigma(\alpha)$ by a measurable $B:\T^1\rightarrow U(n)$, then
it can be conjugated to $(\alpha,C)$ by a Gevrey map $\widetilde
B\in \mathcal{G}^\rho(\T^d, U(n))$  of the same class.  Moreover,
$\widetilde{B}(\theta)=B(\theta)$ for a.e. $\theta\in\T^1$, which
implies that $B$ is Gevrey-$\mathcal{G}^\rho$ if it is continuous.
\end{Theorem}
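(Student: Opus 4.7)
The plan is to combine the local Theorem~\ref{Local} with the Gevrey version of Krikorian's renormalization scheme, exploiting the recurrence of $G^m(\alpha)$ to a Diophantine set built into the hypothesis $\alpha \in {\rm RDC\,}$. The idea is to renormalize the cocycle until it becomes so close to a constant that Theorem~\ref{Local} applies, and then pull the resulting Gevrey conjugacy back to the original cocycle.

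First I would set up the renormalization. Pick $\gamma,\tau>0$ and a subsequence $m_k\to\infty$ with $\alpha_{m_k}:=G^{m_k}(\alpha)\in {\rm DC\,}(\gamma,\tau)$. Following Krikorian \cite{Kr1, AK, Kr2, HY2}, one defines an operator that sends the cocycle $(\alpha,A)$ on $\T^1\times U(n)$ to a sequence $(\alpha_m,A_m)$ with $\alpha_m=G^m(\alpha)$, obtained by iterating the cocycle, rescaling the base, and conjugating by an explicit unitary-valued Gevrey map. This operator preserves the class $\mathcal{G}^\rho$ and intertwines conjugacies: since $(\alpha,A)$ is measurably conjugated to the constant $(\alpha,C)$ with $C\in \Sigma(\alpha)$ by some measurable $B$, each $(\alpha_m,A_m)$ is measurably conjugated to a constant cocycle $(\alpha_m,C_m)$ with $C_m\in \Sigma(\alpha_m)$ via an explicit measurable $B_m$ derived from $B$.

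The second ingredient is Krikorian's dynamical convergence: when the original cocycle is measurably reducible to a constant in $\Sigma(\alpha)$, the renormalized cocycles accumulate at constants along the Diophantine subsequence, so after adjusting by constants on both sides one obtains a decomposition $A_{m_k}=\widehat A_k\,e^{G_k}$ with $\widehat A_k\in U(n)$ and $\|G_k\|_L\to 0$. In parallel, one checks that $\lceil B_{m_k}^*\rfloor \geq \epsilon_0>0$ along this subsequence: the quantity $\lceil \cdot \rfloor$ is invariant under left/right multiplication by $U(n)$, hence unaffected by the conjugation steps, and the remaining operations (iteration and base rescaling) can be tracked by a direct computation or by a compactness argument on the space of measurable $U(n)$-valued maps. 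Pick $k$ large enough so that $\|G_k\|_L<\delta \epsilon_0^\ell$ with $\delta$ the constant supplied by Theorem~\ref{Local} applied with $d=1$ and some fixed $\ell>1$. Then the hypotheses of Theorem~\ref{Local} hold at the renormalized level and yield a Gevrey conjugacy $\widetilde B_{m_k}\in \mathcal{G}^\rho(\T^1,U(n))$ equal to $B_{m_k}$ a.e.

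Finally I would pull $\widetilde B_{m_k}$ back through the finitely many renormalization steps. Each step is the composition of a Gevrey conjugation with a base rescaling, both of which preserve $\mathcal{G}^\rho$, so this produces a conjugacy $\widetilde B\in \mathcal{G}^\rho(\T^1,U(n))$ between $(\alpha,A)$ and $(\alpha,C)$. The identity $\widetilde B_{m_k}=B_{m_k}$ a.e.\ transfers through these operations to $\widetilde B=B$ a.e., and a continuous map that agrees a.e.\ with a Gevrey map must coincide with it, giving the continuity statement. The main technical obstacle is securing the uniform lower bound $\lceil B_{m_k}^*\rfloor\geq \epsilon_0$ for infinitely many $k$, since the iteration and rescaling steps of the renormalization act non-trivially on the measurable conjugacy; once this is achieved, a single application of Theorem~\ref{Local} closes the argument.
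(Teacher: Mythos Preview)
Your overall architecture matches the paper's: renormalize, show the renormalized cocycles approach constants, verify the hypotheses of Theorem~\ref{Local} at the renormalized level (smallness of the perturbation, $C_m\in\Sigma(\alpha_m)$, and the non-degeneracy bound $\lceil B_m^*\rfloor\ge\epsilon_0$), apply it once, and pull back. You also correctly flag the non-degeneracy bound as the crux.

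However, your proposed mechanism for that bound is a genuine gap. The quantity $\lceil\cdot\rfloor$ is indeed invariant under constant unitary multiplication, but the renormalized conjugacy has the form $\theta\mapsto B(\beta_{m-1}\theta)$ (times explicit analytic factors), and there is no reason a~priori why $\lceil B(\beta_{m-1}\cdot)\rfloor$ should stay bounded below as $\beta_{m-1}\to 0$; neither a ``direct computation'' tracking $\lceil\cdot\rfloor$ through rescaling nor a generic compactness argument on measurable $U(n)$-valued maps yields this. The paper's device is different: one first translates so that $\theta_0=0$ is a Lebesgue point of $B$ and $B^*$, whence $\int_0^1|B(\beta_{m-1}\theta)B(0)^*-I|\,d\theta\to 0$ (Lemma~\ref{integral estimates}). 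Then the full renormalized conjugacy $G_j$, after right-multiplication by an explicit character $S^*H(l_j\theta)S$ with $l_j\in\Z^n$, tends to $I$ in $L^1$. Since $\lceil S^*H(-l_j\theta)S\rfloor\ge n^{-3/2}$ by Lemma~\ref{Lemma:main-norm}, Lemma~\ref{Uniform Bound} gives $\liminf_j\lceil G_j^*\rfloor\ge \tfrac12 n^{-3/2}$, which is exactly the uniform $\epsilon_0$ you need. The same Lebesgue-point argument, combined with the a~priori Gevrey bound of Lemma~\ref{Priori Bounds} and compactness of the embedding $\mathcal{G}_{\widetilde L}^\rho\hookrightarrow\mathcal{G}_N^\rho$, is also what forces the renormalized cocycles to converge to \emph{specific} constants (Lemma~\ref{convertoconst}), not merely to be precompact; you invoke this as ``Krikorian's dynamical convergence'' without indicating the mechanism. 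Finally, $C_m\in\Sigma(\alpha_m)$ is not automatic from intertwining and requires the arithmetic Lemma~\ref{Renormalized Condition}.
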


\begin{Remark}
We remark that the proofs in this paper can also be generalized to obtain similar local and global results  for Gevrey cocycles 
on compact semisimle Lie groups.
\end{Remark}

The article is organized as follows. In Sect. 2 we give certain facts about analytic, Gevrey and measurable functions which are needed in the sequel. In particular we prove the  Approximation Lemma and the Inverse Approximation Lemma for Gevrey functions $P:\T^d\to u(n)$ of Gevrey index $\rho>1$ which gives the optimal  approximation of $P$ with analytic functions $P_j$ in the complex strips $\T^d_{h_j}$, where $h_j=h_0 \delta^j$, $j\in\N$, and $0<\delta<1$. By optimal we mean that $P_j-P_{j-1}$ is $O(\exp (-C h_m^{- 1/(\rho -1) })$ in $\T^d_{h_j}$, where $C>0$ is a constant. 
We point out that the approximation with the truncated Fourier series is not optimal. 
In Sect. 2.3 we introduce the important quantity  $\lceil B \rfloor$  giving a sort of a ``distance'' between a measurable map $B:\T^d\to M_n$   and the set of ``totally degenerate maps''.  The definition of $\lceil B \rfloor$ is  invariant with respect to the choice of the unitary bases in $\C^n$. 
We introduce  the sets  $\Gamma(N,\epsilon)$ and $\Pi(\widetilde N, \xi, \varepsilon)$ in order to keep track on the evolution of the quantity  $\lceil \cdot \rfloor$ when performing  certain  operations on $B$ such as  truncation of the Fourier series of $B$ up to order $N$ and multiplication. The set $\Pi$ obeys a simple rule under multiplication which allows one to use it successfully   in the Iterative Lemma.  

In Sect. 3 we prove the local rigidity result. First we establish the KAM Step - Proposition \ref{KAM}. It provides a conjugation of an  analytic cocycle $(\alpha, Ae^F)$ in $\T^d_h$ with sup-norm $|F|_h\le \epsilon\ll 1$  to another one  $(\alpha, A_+e^{F_+})$ with sup-norm $|F_+|_{(1-\kappa)h}\le \epsilon^{1+\sigma}$, where $0<\kappa<1$ and $0<\sigma \ll 1$ are constants and $A,A_+\in U(n)$ are constant matrices. The sup-norm of the conjugating operator $R$, however, can be estimated only by $\epsilon^{-K_*}$, where the constant $K_*\ge 1$ may be  large  due to the presence of resonances. On the other hand, it belongs to a certain $\Pi(N,1/n, \epsilon^{1-4\sigma})$, $N$ being the order of the truncated Fourier series of $F$,  which gives control on $\lceil R \rfloor$. To solve the corresponding homological equations for the non-resonant terms we use a  variant of the inverse function theorem - Lemma \ref{Basic-Lemma}. Iterating the KAM step we obtain almost reducibility with optimal estimates in Lemma \ref{anal-appro-gev}. By optimal we mean again that the small constants $\varepsilon_m$ in Lemma \ref{anal-appro-gev} are of the size of $ \exp (-C h_m^{- \frac{1}{{\rho -1}}} )$, $C>0$. In Sect. \ref{Gevrey reducibility} we prove reducibility in the Gevrey class $SW_\rho^{\mathcal{G}}(\T^d,U(n))$ provided that the cocycle is reducible by a measurable conjugation $B$ satisfying \eqref{eq:small-condition-epsilon} (see   Lemma \ref{Gev-conj} ). The  idea    (see Lemma \ref{conj-as-large}) is first  to consider  the conjugation with $B^\ast R_m$ for $m\gg 1$, where $R_m$  gives the  the conjugation   to the  cocycle $(\alpha,A_m e^{F_m})$ in  the Iterative Lemma. Using the $\epsilon$-non-degeneracy of $B^\ast$ given by \eqref{eq:small-condition-epsilon} and the relation
$R^{(m)}\in \Pi(L_m, n^{-m}, \epsilon/4n^{m} )$ in 
\eqref{eq:R} with some $L_m\in \N$,  we obtain that the eigenvalues of $A_m$ satisfy a suitable non-resonant condition. This  allows us  to estimate $R_m$ by $\varepsilon_m^{1/2}$ using  the KAM Step (Proposition \ref{KAM}, (ii)). Then the Inverse Approximation Lemma gives a conjugation in the class $\mathcal{G}^\rho(\T^d,U(n))$. We point out that there is no loss of Gevrey regularity. 

In Sect. \ref{Global} we prove Theorem \ref{Gloabl} adapting the renormalization scheme to the case of Gevrey classes.

\section{Preliminaries}
In this section we introduce the necessary tools to prove the KAM Step and the Iterative Lemma. In Sect. \ref{Sect:analytic} we recall well-known facts on the Fourier series of analytic functions $P:\T^d_{h_j}\to u(n)$. In Sect. \ref{sec:gevrey-functions} we prove the  Approximation Lemma and the Inverse Approximation Lemma for Gevrey functions $P:\T^d\to u(n)$ of Gevrey index $\rho>1$ which gives the best approximation of $P$ with analytic functions $P_j$ in the complex strips $\T^d_{h_j}$, where $h_j=h_0 \delta^j$, $j\in\N$, and $0<\delta<1$. By ``best approximation'' we mean that the sup-norm of $P_j - P_{j+1}$ is of the size of  
$ \exp (-C h_j^{- \frac{1}{{\rho -1}}} )$ in $\T^d_{h_{j+1}}$, where $C>0$. We point out the the usual approximation with the truncated Fourier series is not optimal, it gives an estimate with $ \exp (-C h_j^{- \frac{1}{{\rho}}} )$. The usual approximation with entire functions due to Moser is not optimal either. 
In Sect. \ref{sec:measurable-functions}  we introduce the important invariant  $\lceil B \rfloor$ for measurable functions $B:\T^d \to M_n$ and the sets $\Gamma$ and $\Pi$, which we need in the KAM Step and in the Iterative Lemma. 

\subsection{Analytic functions}\label{Sect:analytic}

Denote by the $M_n= M_n(\C)$ the linear space of all $n\times n$ matrices with norm $|A|\:= \sup \{\|Au\|:\ \|u\|=1\}$, where $\|\cdot\|$  is the  norm on $\C^n$ associated with the Hermitian inner product on it. 
Given $h>0$ we set
\[
\R^d_h:= \{\theta\in \C^d:\, |{\rm Im\,} \theta_j|<h, 1\le j\le
d\},\]
\[\T^d_h:= \R^d_h/\Z^d=\{\theta\in \C^d/\Z^d:\, |{\rm Im\,} \theta_j|<h,
1\le j\le d\},
\]
and  for any  analytic
function $F: \R^d_h\rightarrow M_n$ ($F: \T^d_h\rightarrow M_n$)
we define
\[
|F|_h=\sup_{|{\rm Im\, } \theta|<h}|F(\theta)|.
\]
Denote by $C_h^\omega(\T^d, M_n)$ the Banach  space of all
analytic functions $F: \T^d_h\rightarrow M_n$, equipped with the sup-norm $|\cdot|_h$.
The Fourier expansion of  $F$ is given by
\[
F(\theta)=\sum_{k\in \Z^d}\widehat{F}(k)e^{2\pi  i\langle k,\theta\rangle},
\]
and the Fourier coefficients satisfy the
estimate
\begin{equation}
|\widehat{F}(k)|\leq |F|_{h}e^{-2\pi |k|h}.
\label{eq:fourier-coef}
\end{equation}
We introduce as well the Wiener  norm
\begin{equation}
|F|_{1,h}:= \sum_{k\in\Z^n} |\widehat{F}(k)| e^{2\pi |k|h}
\label{eq:1-norm}
\end{equation}
and we denote by $\mathfrak{B}_h$  space of all $F\in C_h^\omega(\T^d, M_n)$ with bonded norm $|F|_{1,h}<\infty$. One can easily see that  $\mathfrak{B}_h$ is a Banach space and even a Banach algebra - for any $F, G\in \mathfrak{B}_h$ one has 
\begin{equation}
|F G|_{1,h} \le |F|_{1,h}|G|_{1,h}. 
\label{eq:product}
\end{equation}
Taking into account \eqref{eq:fourier-coef} we get  the following relation between the two norms
\begin{equation}\label{eq:norm-relation}
|F|_h \le |F|_{1,h}, \ |F|_{1,h_+} \le |F|_h \sum_{k\in\Z^d} e^{-2\pi |k|(h-h_+)} \le \frac{c_\ast}{(h-h_+)^d} |F|_h
\end{equation}
for any $0<h_+<h$, where $c_\ast = c_\ast(d)$ is a positive constant.

We denote by  $T_N F$ and $R_N F$ ($N\in \N$)  the truncated trigonometric polynomial of $F$ of order $N$  and the corresponding
remainder term respectively, i.e.
\[
T_N F=\sum_{|k|\leq N} \widehat{F}(k) e^{2 \pi  i\langle k,\theta\rangle}
\quad \mbox{and} \quad R_N F=\sum_{|k|> N} \widehat{F}(k) e^{2\pi  i \langle k,\theta\rangle} .
\]
One obtains as in \eqref{eq:norm-relation} the well-known estimate
\begin{eqnarray}\label{rem-fou-est}
|R_N F|_{h_+} \le |R_N F|_{1,h_+} \le \frac{c_*N^d}{(h-h_+)^d}e^{-N(h-h_+)} |F|_h
\end{eqnarray}
where  $0<h_+<h$.

For any subset $\Omega\subseteq M_n$, we denote by $C_h^\omega(\T^d, \Omega)$
the set of all $F\in C_h^\omega(\T^d, M_n)$ satisfying
$F(\T^d)\subseteq \Omega$. In particular the space  $C_h^\omega(\T^d, u(n))$ consists of all analytic functions $F:\T^d_h\to M_n$ such that $F(\theta)^\ast = - F(\theta)$ for each $\theta\in \T^d$. This
is  a
Banach subspace of $C_h^\omega(\T^d, M_n)$ and $F\in C_h^\omega(\T^d, M_n)$ is in
$C_h^\omega(\T^d,u(n))$ if and only if
\begin{eqnarray*}
\widehat{F}(k)^*=-\widehat{F}(-k).
\end{eqnarray*}

\subsection{Approximation and inverse approximation lemma for Gevrey functions}\label{sec:gevrey-functions}
Given $\rho\ge 1$, $L>0$, and  a subset  $\Omega\subseteq M_n$, we denote by $\mathcal{G}_L^\rho(\T^d, \Omega)$ the
set of all $C^\infty$ functions $P: \T^d \rightarrow \Omega$  such that
\begin{equation}
\|P\|_{L}:=\sup_{k\in \N^d}\sup_{\theta\in \T^d}(|\partial^{k}P(\theta)|L^{-|k|}k!^{-\rho})<\infty
\label{eq:gevrey}
\end{equation}
where $|k|=k_1+\cdots+k_d$ and
$k!=k_1!\cdots k_d!$ for
$k=(k_1,\cdots,k_d)\in \N^d$. Hereafter we suppose that $\Omega$ is closed in $M_n$. Then $\mathcal{G}_L^\rho(\T^d, \Omega)$ is complete. For  $\rho=1$ this space consists of analytic functions.  When $\rho>1$ the  space $\mathcal{G}_L^\rho(\T^d, \Omega)$ is not quasi-analytic, i.e. the   unique continuation rule does not hold any more and there exist functions with compact support. 
On the other hand, functions of that class can be nicely approximated by analytic functions as follows as we shall see below.
\begin{Proposition} (Approximation Lemma)\label{app-lem}
Fix $\rho >1$, $L\ge 1$, $0<\delta<1$  and set $h_j=h_0\delta^j$, $j\in\N$, where $0<h_0\le 1/(2L)$.
Then for any  $P\in \mathcal{G}_L^\rho(\T^d, u(n))$
there is a sequence $P_j\in C^{\omega}_{h_j} (\T^d, u(n))$, $j\geq 0$, such
that
\[
\displaystyle  \sup_{\theta\in\T^d}\, |P_j(\theta)-P(\theta)| \ \le
\  C_0\, L^d\, \exp \left (-(cL h_j)^{- \frac{1}{{\rho -1}}} \right)
\|P\|_L
 \]
 and
\[|P_{j+1}-P_j|_{h_{j+1}}\ \leq \ C_0 L^d e^{-(cL h_j)^{-1/(\rho-1)})}\|P\|_{L},\]
\[|P_0|_{h_0}\ \leq \ C_0 (1+L^d e^{-(cL h_0)^{-1/(\rho-1)})})\|P\|_{L},\]
where  $c=c(\rho)$ and $C_0=C_0(d, \rho)$  are positive constants depending only on $\rho$ and  on $d$ and $\rho$ respectively.
\end{Proposition}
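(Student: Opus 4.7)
The plan is to define the approximants $P_j$ as the truncated Fourier series $T_{N_j}P$ of $P$ at an appropriately tuned cut-off $N_j$ and to verify the three estimates directly via sharp tail bounds on the Fourier coefficients. The first step is the Gevrey decay of $\widehat P(k)$. Applying iterated integration by parts along a coordinate $\ell$ with $|k_\ell|$ maximal, together with the Gevrey bound $|\partial_\ell^m P|_\infty \le \|P\|_L L^m (m!)^\rho$, one obtains $|\widehat P(k)| \le \|P\|_L (2\pi|k|/\sqrt d)^{-m} L^m (m!)^\rho$ for every $m \in \N$ and every $k \neq 0$. Minimizing over $m$ via Stirling yields
\[
|\widehat P(k)| \le \|P\|_L \exp\bigl(-c_0(|k|/L)^{1/\rho}\bigr),\qquad k \in \Z^d \setminus \{0\},
\]
for a positive constant $c_0 = c_0(d,\rho)$.

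Next, I choose $N_j$ to be the integer part of the ``Gevrey equilibrium frequency'' $M_{h_j}$ at which the decay rate $c_0(|k|/L)^{1/\rho}$ dominates twice the strip-norm amplification $2\pi|k|h_j$. An elementary computation gives $M_h = (c_0/(4\pi))^{\rho/(\rho-1)} L^{-1/(\rho-1)} h^{-\rho/(\rho-1)}$, and by construction $2\pi|k|h_j \le \tfrac12 c_0 (|k|/L)^{1/\rho}$ whenever $|k| \le N_j$. Set $P_j := T_{N_j} P$, which is a trigonometric polynomial (hence entire) and takes values in $u(n)$ on the real torus by the symmetry $\widehat P(-k) = -\widehat P(k)^\ast$. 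The first estimate follows from the tail bound $|P_j - P|_0 \le \sum_{|k|>N_j} |\widehat P(k)|$: the Gevrey decay combined with a standard shell sum in $|k|$ gives a bound of the form $C(d,\rho) L^d \exp(-\tfrac12 c_0 (N_j/L)^{1/\rho}) \|P\|_L$, and substituting the definition of $N_j$ turns $(N_j/L)^{1/\rho}$ into a positive constant multiple of $(Lh_j)^{-1/(\rho-1)}$. For the second estimate,
\[
|P_{j+1} - P_j|_{h_{j+1}} \le \sum_{N_j < |k| \le N_{j+1}} |\widehat P(k)|\, e^{2\pi|k| h_{j+1}},
\]
the equilibrium property at strip width $h_{j+1}$ (valid for $|k| \le N_{j+1}$) swallows half of the Gevrey decay, so each summand is at most $\|P\|_L \exp(-\tfrac12 c_0 (|k|/L)^{1/\rho})$ and the same shell sum yields the claimed bound. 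The bound on $|P_0|_{h_0}$ is obtained by splitting $\sum_{|k| \le N_0} |\widehat P(k)|\, e^{2\pi|k|h_0}$: the contribution from $\{|k| \le L\}$ is bounded by $C_0 \|P\|_L$ using the trivial estimate $|\widehat P(k)| \le \|P\|_L$, while the contribution from $\{|k|>L\}$ yields the $L^d \exp(-(cLh_0)^{-1/(\rho-1)}) \|P\|_L$ correction.

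The main obstacle, and the reason the naive Fourier truncation at the analytic-category level $N_j \sim 1/h_j$ yields only the suboptimal rate $\exp(-Ch_j^{-1/\rho})$, is to pin $N_j$ precisely at the Gevrey equilibrium scale $N_j \sim L^{-1/(\rho-1)} h_j^{-\rho/(\rho-1)}$: it is exactly this balance between the Fourier decay exponent $(N/L)^{1/\rho}$ and the strip-norm amplification $2\pi Nh$ that manufactures the Gevrey exponent $1/(\rho-1)$ rather than $1/\rho$. The polynomial pre-factors ($L^d$ from the shell counting and $N_{j+1}^d$ from the annular sum) are absorbed using the standing hypothesis $h_0 \le 1/(2L)$, which guarantees that $(Lh_j)^{-1/(\rho-1)}$ is bounded below by a universal positive constant, so that a fraction of $c_0$ may be sacrificed to dominate these polynomials; the condition $\rho > 1$ is essential, since only then does the equilibrium scale $h^{-\rho/(\rho-1)}$ strictly exceed the analytic scale $1/h$.
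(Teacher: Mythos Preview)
Your approach via Fourier truncation at the Gevrey equilibrium cut-off $N_j \sim L^{-1/(\rho-1)} h_j^{-\rho/(\rho-1)}$ is genuinely different from the paper's, which builds an almost analytic extension of $P$ (a truncated Taylor series in the imaginary direction, equation~\eqref{eq:aae}) and then solves a $\bar\partial$-problem via Green's formula coordinate by coordinate. The paper even remarks twice that ``the approximation with the truncated Fourier series is not optimal, it gives an estimate with $\exp(-Ch_j^{-1/\rho})$''; but that remark refers to the naive cut-off $N_j\sim 1/h_j$, and your observation that pushing the cut-off out to the equilibrium scale recovers the full Gevrey rate $\exp\bigl(-(cLh_j)^{-1/(\rho-1)}\bigr)$ is correct and gives a considerably shorter proof of the first two estimates. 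What the almost-analytic route buys is that the approximant is \emph{a priori} bounded on the strip by $C(\rho,d)\|P\|_L$ with no $L^d$ prefactor, because it is a finite Taylor sum in the imaginary variable whose terms are controlled termwise by Lemma~\ref{Lemma:Stirling}.

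That advantage is exactly where your argument for the third estimate breaks. In your splitting of $\sum_{|k|\le N_0}|\widehat P(k)|e^{2\pi|k|h_0}$, the low-frequency block $\{|k|\le L\}$ contains $\sim L^d$ lattice points, each contributing $|\widehat P(k)|e^{2\pi|k|h_0}\le e^\pi\|P\|_L$, so that block is $O(L^d\|P\|_L)$, not $O(\|P\|_L)$ as you claim. Likewise the block $\{L<|k|\le N_0\}$: after absorbing the strip amplification you are left with $\|P\|_L\sum_{|k|>L}e^{-\frac12 c_0(|k|/L)^{1/\rho}}$, which is again $O(L^d\|P\|_L)$ and carries no factor $e^{-(cLh_0)^{-1/(\rho-1)}}$ --- that would require the sum to start at $N_0$, not at $L$. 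The sharpest bound your Wiener-norm argument yields is $|P_0|_{h_0}\le C_0 L^d\|P\|_L$, which is strictly weaker than the stated $C_0\bigl(1+L^d e^{-(cLh_0)^{-1/(\rho-1)}}\bigr)\|P\|_L$. This gap is not fatal for the downstream application (the paper in fact uses only the weaker form with the $L^d$ prefactor, cf.\ \eqref{G} and \eqref{GEV-cond}), but it \emph{is} a gap in proving the Proposition exactly as stated; to close it you would need either a different approximant or a non-Wiener estimate of $|T_{N_0}P|_{h_0}$.
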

\begin{proof}
Proposition \ref{app-lem} is a variant of Proposition 3.1 \cite{P}. The proof given bellow  is adapted to the case when  $P$ takes its values in $u(n)$  simplifying as well  some arguments of  \cite{P}.

\vspace{0.3cm}
\noindent
{\em 1.  Almost analytic extension of P}. We recall the following estimates from \cite{P}.
\begin{Lemma}\label{Lemma:Stirling}
There is  a constant $C(\rho)\ge 1$, depending only on
$\rho$,
such that for any $t\in (0,1]$ and $m\in {\N}$ satisfying
\begin{equation}
1 \le m \le t^{\, -\frac{1}{{\rho - 1}}} + 1\,  ,
                                    \label{a.1}
\end{equation}
the following inequality holds
\begin{equation}
t^{m}\,  m !^{\, \rho -1}\ \le\ C(\rho)
m^{(\rho-1)/2}e^{-(\rho-1)m}.
                                    \label{a.2}
\end{equation}
\end{Lemma}
\begin{proof}
 Stirling's formula implies
$$
t^{m}\,  m !^{\, \rho -1}\
\le\
C_1(\rho) m^{(\rho-1)/2}
e^{-(\rho-1)m}
\exp\left((\rho-1)  m\ln m + m \ln t \right)
$$
$$
=\
C_1(\rho) m^{ (\rho-1)/2}
e^{-(\rho-1)m}
\exp\left\{(\rho-1) m \ln \left(m\, t^{\frac{1}{{\rho -1}}}\right)\right\} .
$$
Moreover, \eqref{a.1} yields
\[
\displaystyle
m \ln \left(m\, t^{\frac{1}{\rho -1}}\right) \le m
\ln \left(1+  t^{\frac{1}{{\rho -1}}}\right) \le
  m\,  t^{\frac{1}{\rho -1}} \le 1 +   t^{\frac{1}{\rho
-1}} \le 2\, ,
\]
which proves (\ref{a.2}).
\end{proof}

We define an almost analytic extensions $F_j$ of $P$ in
$\T^d_{2h_j}$, $j\ge 0$,  as follows
\begin{equation}
F_j(\theta + i\widetilde \theta)\
=\ \sum_{\alpha\in
{\mathcal M}_j}\,
\partial_\theta^k
P(\theta)\frac{(i\widetilde \theta)^k }
{k !}, \quad \theta\in\T^d,\ \widetilde\theta\in\R^d.
                                    \label{eq:aae}
\end{equation}
The  index set ${\mathcal M}_j$ consists of all multi-indices
$k=(k_1,\ldots,k_d)\in \N^d$
such that $k_1\le N_j,\cdots,k_d\le N_j$,
 where
\begin{equation}
N_j =\left[(2L h_j)^{-\frac{1}{{\rho -1}}}\right]
                                    \label{a.4}
\end{equation}
and   $[x] = \inf\{k\in\Z:\, x\ge k\}$ is the integer part of $x\in\R$. Estimating \eqref{eq:aae} term by term and using \eqref{eq:gevrey} one obtains
\[
|F_j|_{2h_j}\ \le \ \|P\|_L
\sum_{k\in
{\mathcal M}_j}\,
(2L h_j)^{|k|}
k !\, ^{\rho -1}\ .
\]
Let $k=(k_1,\ldots,k_s,\ldots k_d)\in {\mathcal M}_j$ and $k_s>0$.
Then   $0<\alpha_k\le  N_j = \left[(2L h_j)^{-\frac{1}{{\rho -1}}}\right]$ and setting  $t= 2Lh_j \le 2Lh_0<1$   and $m=k_s$ in Lemma \ref{Lemma:Stirling} one obtains
\[
(2Lh_j)^{k_s}k_s !^{\, \rho-1}  \le C(\rho)  m^{(\rho-1)/2}e^{-(\rho-1)m}
\]
which implies
$$
|F_j|_{2h_j}\ \le \ \left(
1 +  C(\rho)\sum_{m=1}^\infty
m^{(1-\rho)/2}e^{-(\rho-1)m}\right)^{d}\|P\|_L \ =\
 C_1(\rho,d)\|P\|_L .
$$
On the other hand,   applying $\displaystyle \bar\partial_{s} :=
\frac{1}{2}\left(\frac{\partial}{\partial \theta_s} + i\frac{\partial}{\partial\widetilde \theta _s}\right) $, $1\le s\le d$,
to $F_j$  one  gets
\begin{equation}
2 \bar\partial_{k}F_j(\theta + i\widetilde \theta)\ =\
 \sum_{
k\in
{\mathcal M}_j^s} \,
\partial_\theta^k
\partial_{\theta_s}
P(\theta)\frac{(i\widetilde \theta)^k}
{k ! }
                                    \label{eq:bar-d-1}
\end{equation}
where ${\mathcal M}_j^s$ consists of all multi-indices $k=(k_1,\ldots,k_s,\ldots k_d)\in {\mathcal M}_j$ such that $k_s=N_j$.
Each term in the sum can be estimated in $\T^d_{2h_j}$ by
$$
L(2Lh_j)^{|k|} k ! \, ^{\rho -1}(k_s+1)^{\rho} \|P\|_L.
$$
Since
$$
 (2Lh_j)^{-\frac{1}{{\rho -1}}}  \le  k_s=N_j <
(2Lh_j)^{-\frac{1}{{\rho -1}}}  + 1,
$$
one obtains  from (\ref{a.2}) (with $t = 2L_1h_j$ and  $m = k_s =N_j$) the estimate
$$
(2Lh_j)^{k_s}\,  k_s !^{\, \rho -1} (k_s+1)^{\rho}
\le  C
(2Lh_j)^{-\frac{\rho}{{\rho -1}}-\frac{1}{2}}
\exp\left(-(\rho-1)(2Lh_j)^{-\frac{1}{{\rho -1}}}\right) .
$$
This implies as above
\begin{equation}
\begin{array}{lcrr}
|\bar\partial_{k}F_j|_{2h_j} \\  [0.3cm] 
\le \
 C_1\,  L  (Lh_j)^{-\frac{\rho}{{\rho -1}}-\frac{1}{2}}
\exp \left ( - (\rho-1)(2L h_j)^{- \frac{1}{{\rho -1}}} \right)\,
\|P\|_L \\  [0.3cm]
 \le \
C\, L\,
\exp \left ( - (cL h_j)^{-
\frac{1}{{\rho -1}}} \right)\,  \|P\|_L \, ,
\end{array}
\label{eq:estimate1}
\end{equation}
where $C_1$ and $C = C(\rho,d)>0$ are positive  constants depending only on $d$ and $\rho$ and $c=2(\frac{1}{2}(\rho-1))^{1-\rho }$. In the same way, differentiating \eqref{eq:bar-d-1} one obtains the estimate
\begin{equation}
|\bar\partial ^l F_j|_{2h_j}
 \le \
C(\rho,d)\, L^{|l|}\,
\exp \left ( -  (cL h_j)^{-
\frac{1}{{\rho -1}}} \right)\,  \|P\|_L
\label{eq:estimate-bar-d}
\end{equation}
for any $l=(l_1,\ldots,l_d)\in \N^d$ of length $|l|\ge 1$ and with components $0\le l_s\le 1$, $s\in \{1,\ldots,d\}$.
Moreover, $F_j(\theta)=P(\theta)$ for any $\theta\in\T^d$ and \eqref{eq:aae} yields
\begin{equation}
\forall \ z\in \T^d_{2h_j},\ F_j(z)^\ast = - F_j(\bar{z}).
\label{a:star}
\end{equation}
From now on we consider $F_j$ as $\Z^d$-periodic functions on $\R^d_{2h_j}$ with values in $M_n$ which means that $F_j(z+p) = F_j(z)$ for any $p\in\Z^d$.

\vspace{0.3cm}
\noindent
{\em 2.  Construction  of $P_j$}.
We are going to approximate $F_j$ by  $1$-periodic analytic  in $\R^d_{h_j}$ functions
 using   Green's formula
\begin{equation}
\frac{1}{2\pi i} \int_{\partial D}\, \frac{f(\eta)}{\eta -\zeta}\, d\eta  +
\frac{1}{2\pi i} \int\!\!\!\int_{D}\,
\frac{\bar\partial f(\eta)}{\eta -\zeta}\,
d\eta \wedge d\bar\eta
= \left\{ \begin{array}{ll} f(\zeta) & \textrm{if $\zeta\in D$}\\
0 & \textrm{if $\zeta\notin \bar D$}
\end{array} \right.
                                     \label{eq:Green}
\end{equation}
where $D \subset {\C}$ is a bounded domain symmetric with respect
to the real axis and with a piecewise smooth
boundary $\partial D$ which is positively oriented with respect to
$D$, $\bar D =  D\cup\partial D$,
and $f\in C^1(\bar D,M_n)$. Notice that
$$
F(\zeta)= \frac{1}{2\pi i} \int_{\partial D}\, \frac{f(\eta)}{\eta
-\zeta}\, d\eta
$$
is analytic in $D$ with values in $M_n$.
\begin{Lemma} \label{Lemma:star}
Suppose that $f(z)^\ast = - f(\bar{z})$ for any $z\in \partial D$. Then $F(z)^\ast = - F(\bar{z})$ for any $z\in D$.
\end{Lemma}
The proof is immediate using the symmetry of $\partial D$ with respect to the involution $z\to \bar{z}$.

Denote by $D_j\subset {\C}$ the open rectangle
$\{z\in \C:\,  |{\rm Re\, } z |<1/2,\, | {\rm Im\, } z|<2h_j \}$,
by  $\partial D_j$ its boundary which is  positively oriented
with respect to $D_j$,
and  by $\Gamma_j$ the union of the oriented segments
\[
\Gamma_j:=[-1/2 - 2i h_j, 1/2 - 2i h_j] \cup [1/2  + 2i h_j, -1/2 + 2i h_j].
\]
Given $\eta\in {\C}$,  we consider the $1$-periodic
meromorphic function
$$
\zeta\ \mapsto\   K(\eta,\zeta) := \frac{1}{\eta - \zeta} +
\sum\limits_{k=1}^{\infty} \left(\frac{1}{\eta - \zeta + k} +
\frac{1}{\eta - \zeta - k}\right).
$$
Obviously, $K(\eta,\zeta)= - K(\zeta,\eta)$ and the meromorphic function $\eta\to K(\eta,\zeta)$ is $1$-periodic for any $\zeta$ fixed.
Set $D:=\{z\in \C,\,  |{\rm Re\, } z |<1/2,\, | {\rm Im\, } z|<1/2 \}$. Writing $K=K_0 + K_1$, where
\[
K_0(\eta,\zeta):= \sum\limits_{k=-2}^{2} \frac{1}{\eta - \zeta + k}\quad \mbox{and}\quad
 K_1(\eta,\zeta):= 2\sum\limits_{k=3}^{\infty} \frac{\eta-\zeta}{(\eta - \zeta)^2 - k^2}
\]
one can find $C>0$ such that
\begin{equation}
\forall\, z\in D,\quad i\int_{D}\,
|K(\eta,z)|\,
 d\eta \wedge d\bar\eta \le C.
\label{eq:estimate2}
\end{equation}
Consider the function
\[
F_{j,1}(z) : =
\frac{1}{2\pi i} \int_{\Gamma_j}\,
F_j(\eta_1,z_2,\ldots,z_{d})K(\eta_1,z_1)\, d\eta_1 \ , \quad z \in
\R^d_{2h_j} .
\]
It is smooth  and  $\Z^d$-periodic  in the strip
$\R^d_{2h_j}$ and analytic with respect to $z_1$.
 Moreover, for any $z\in \R^d_{2h_j}$ such that  $z_1\in D_j$ we have
$$
F_{j,1}(z) =
\frac{1}{2\pi i} \int_{\partial D_j}\,
F_j(\eta_1,z_2,\ldots,z_{d})K(\eta_1,z_1)\, d\eta_1
$$
since the function under the integral is $1$-periodic with respect to
$\eta_1$. Lemma \ref{Lemma:star} implies that
$F_{j,1}(z)^\ast=-{F_{j,1}(\overline{z})}$ for any $z\in \R^d_{2h_j}$ such that  $z_1\in D_j$ and by continuity and  periodicity we get it for any $z\in \R^d_{2h_j}$.
Moreover,    (\ref{eq:Green}) yields
$$
F_{j,1}(z) = F_{j}(z) -
\frac{1}{2\pi i} \int_{D_j}\,
\bar\partial_{\eta_1} F_j(\eta_1,z_2,\ldots,z_{d})K(\eta_1,z_1)\,
d\eta_1 \wedge d\bar\eta_1 .
$$
Set $F_{j,0}(z): = F_{j}(z)$  and denote by
${\mathcal U}_{j,1}$ the set of all $(z_1,\ldots,z_d)\in \R^d_{2h_j}$ such that $|{\rm Im\, }z_1| \le
h_j$.   Using \eqref{eq:estimate1},  \eqref{eq:estimate-bar-d} and \eqref{eq:estimate2} we obtain   for any multi-index
$l = (0,l_2,\ldots, l_{d})\in   {\N}^{d}$ with
 $0 \le l_s\le 1$ for $ 2 \le s \le d$
the following estimate 
\begin{equation}
\left |\bar\partial^l
(F_{j,1} - F_{j,0})\right|_{{\mathcal U}_{j,1}}
\ \le \
 C\, L^d\,
\exp \left ( - (cL h_j)^{- \frac{1}{{\rho -1}}}
\right)\,   \|P\|_L ,
                                    \label{eq:estimate-bar-d-1}
\end{equation}
where $C = C(\rho, d) > 0$.
For $2\le s\le d$ we define by recurrence ${\mathcal U}_{j,s}$ as the set of all $(z_1,\ldots z_s, \ldots z_d)$ in   ${\mathcal U}_{j,s-1}$ such that $|{\rm Im\, }z_s| \le
h_j$ and set
\[
F_{j,s}(z):=
\frac{1}{2\pi i} \int_{\Gamma_j}\,
F_{j,s-1}(z_1,\ldots,z_{s-1}, \eta_s, z_{s+1}, \ldots, z_{d})K(\eta_s,z_s)\, d\eta_s
\]
for $z \in U_{j,s-1}$.
By construction $F_{j,s}$ is a smooth  $\Z^d$-periodic function  with values in $M_n$ and also analytic with respect to the variables $(z_1,\ldots,z_s)$.
It follows by induction that $F_{j,s}-F_{j,s-1}$ satisfies (\ref{eq:estimate-bar-d-1})
in ${\mathcal U}_{j,s}$ for any $l = (0,\ldots,0,l_{s+1},\ldots,l_{d})$ with  $0\le l_s\le 1$ and that $F_{j,s}(z)^\ast=-{F_{j,s}(\overline{z})}$ in
${\mathcal U}_{j,s}$.

Finally, the function  $P_j := F_{j,d}$  is analytic and $\Z^d$-periodic in  $\R^d_{h_j}$. Moreover, $P_{j}(z)^\ast=-{P_{j}(\overline{z})}$ in
$\R^d_{h_j}$, hence,  $P_j\in C^\omega_{h_j}(\T^d,u(n))$. Moreover,
$$
|P_j -F_{j}|_{h_j}
\ \le \
 C\, L^d\,
\exp \left ( - (cL h_j)^{- \frac{1}{{\rho -1}}}
\right)  \|P\|_L  .
$$
In particular,
$$
\begin{array}{rcl}
|P_{j+1} -P_{j}|_{h_{j+1}} \le |P_{j+1} -F_{j+1}|_{h_{j+1}} + |P_{j} -F_{j}|_{h_{j+1}} +|F_{j+1}
-F_{j}|_{h_{j+1}} \\ [0.3cm]
\le C \, L^d\,
\exp \left ( -  (cL h_j)^{- \frac{1}{{\rho -1}}}
\right)  \|P\|_L .
\end{array}
$$
Moreover,
$$
|P_j(\theta) -P(\theta)| \ \le \  C\, L^d\, \exp \left (
- (cL h_j)^{- \frac{1}{{\rho -1}}} \right)
\|P\|_L
$$
in $\R^d$, since $F_j(\theta) = P(\theta)$ for $\theta$ real.
Finally,
$$
|P_{0}|_{h_{0}} \le |F_0|_{h_{0}}  + |P_{0} -F_{0}|_{h_{0}}  \le \  C \left( 1 + L^d
\exp \left ( -(cL h_0)^{- \frac{1}{{\rho -1}}}
\right)\right) \|P\|_L .
$$
This completes the proof of the proposition. \end{proof}

Conversely, there is also the following.
\begin{Proposition} (Inverse Approximation Lemma)
\label{inv-app-lem} Let $0<\delta<1$, $0<h_0\le 1$ and  $h_j=h_0\delta^j$, $j\in\N$. Let $\Omega$ be a closed subset of $M_n$ and
$P_j\in C^{\omega}_{h_j} (\T^d, \Omega)$, $j\geq 0$, satisfy
\[
|P_{j+1}-P_{j}|_{h_{j}}\leq C_0  e^{-(L h_j)^{-1/(\rho-1)}}
\]
for any $j\ge 0$, where $C_0, L>0$. Then there is $C=C(\rho,d)\ge 1$ and $c_0=c_0(\rho,d)\ge 1$ and
$P\in {\mathcal G}_{c_0L}^\rho(\T^d,\Omega)$ such that $\lim P_j = P$ in ${\mathcal G}_{c_0L}^\rho(\T^d,\Omega)$ and
\[
\|P-P_{j}\|_{c_0L} \le  \frac{C C_0}{1-\delta} L^2e^{-\frac{1}{2}(L h_j)^{-1/(\rho-1)}}
\]
for any $j\in\N$.
\end{Proposition}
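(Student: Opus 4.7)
The plan is to define $P$ as the telescoping limit
\[
P\;:=\;P_0+\sum_{i=0}^{\infty}\,(P_{i+1}-P_i),
\]
the convergence being understood in $\mathcal{G}^\rho_{c_0L}(\T^d,M_n)$ for an appropriate $c_0=c_0(\rho,d)\ge 1$. Since each $P_j$ takes values in the closed set $\Omega$ and uniform convergence $P_j\to P$ on $\T^d$ is just the $k=0$ case of the estimate I will prove, $P$ will automatically lie in $\mathcal{G}^\rho_{c_0L}(\T^d,\Omega)$ once the bound $\|P-P_j\|_{c_0L}\le \frac{CC_0}{1-\delta}L^2e^{-t_j/2}$ is in hand, where I abbreviate $t_j:=(Lh_j)^{-1/(\rho-1)}$.

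The first step is the Cauchy inequality in the strip $\T^d_{h_{i+1}}$ where $P_{i+1}-P_i$ is analytic: for every $\theta\in\T^d$ and every $k\in\N^d$,
\[
|\partial^k(P_{i+1}-P_i)(\theta)|\;\le\;\frac{k!}{h_{i+1}^{|k|}}\,|P_{i+1}-P_i|_{h_{i+1}}\;\le\; C_0\,L^{|k|}\,k!\,t_{i+1}^{(\rho-1)|k|}\,e^{-t_{i+1}},
\]
using the identity $h_j^{-|k|}=L^{|k|}t_j^{(\rho-1)|k|}$. The main trick is to split $e^{-t}=e^{-t/2}\,e^{-t/2}$ and use one factor to absorb $t^{(\rho-1)|k|}$: since $t\mapsto t^{(\rho-1)|k|}e^{-t/2}$ is maximized at $t=2(\rho-1)|k|$ with value $(2(\rho-1)|k|/e)^{(\rho-1)|k|}$, Stirling $(|k|/e)^{|k|}\le |k|!$ combined with the multinomial inequality $|k|!\le d^{|k|}k!$ yields $t^{(\rho-1)|k|}e^{-t/2}\le A^{|k|}(k!)^{\rho-1}$ for some $A=A(\rho,d)$. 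Setting $c_0:=2A$ this gives the clean Gevrey-type bound
\[
|\partial^k(P_{i+1}-P_i)(\theta)|\;\le\; C_0\,(c_0L)^{|k|}\,(k!)^{\rho}\,e^{-t_{i+1}/2}.
\]

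It remains to sum in $i\ge j$. Writing $t_i=t_j\mu^{i-j}$ with $\mu:=\delta^{-1/(\rho-1)}>1$, Bernoulli's inequality $\mu^m\ge 1+m(\mu-1)$ gives $e^{-t_j\mu^m/2}\le e^{-t_j/2}e^{-m(\mu-1)t_j/2}$, so when $t_j\ge 1$ the sum is bounded by $\frac{e^{-t_j/2}}{1-e^{-(\mu-1)/2}}$, and since $\mu-1$ is comparable to $(1-\delta)/(\rho-1)$ this provides the announced factor $\frac{C(\rho)}{1-\delta}$. For $t_j<1$ I would split the sum at $m_0:=\lceil\log_\mu(1/t_j)\rceil$: the tail $m\ge m_0$ is bounded by a constant, while the $m<m_0$ block has at most $\log_\mu(1/t_j)$ terms, and the a priori bound $t_j\ge (Lh_0)^{-1/(\rho-1)}$ together with $h_0\le 1$ forces $1/t_j\le L^{1/(\rho-1)}$, producing an at most logarithmic-in-$L$ loss that is easily absorbed into the $L^2$ factor (using $e^{-t_j/2}\ge e^{-1/2}$ in this regime). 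Combining the two regimes yields $\|P-P_j\|_{c_0L}\le\frac{CC_0}{1-\delta}L^2 e^{-t_j/2}$, and completeness of $\mathcal{G}^\rho_{c_0L}(\T^d,\Omega)$ supplies $P$.

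The principal obstacle is the sharpness of the Stirling/Cauchy optimization in Step~2: using $e^{-t}$ entirely to kill $t^{(\rho-1)|k|}$ would yield $(k!)^{\rho-1}$ but no decay in $i$, whereas using it entirely for the $i$-decay would lose the factorial gain. Only the fifty-fifty split produces simultaneously the correct Gevrey exponent $(k!)^{\rho}$ on the scale $c_0L$ (with no loss in $\rho$) and a summable geometric tail $e^{-t_i/2}$; this is exactly what the factor $\frac{1}{2}$ in the exponent of the statement encodes.
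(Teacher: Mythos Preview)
Your argument is correct and mirrors the paper's: Cauchy estimates on $P_{i+1}-P_i$, the split $e^{-t}=e^{-t/2}e^{-t/2}$ with one half traded for $(k!)^{\rho-1}$ via a Stirling-type bound, then summation of the surviving decay. One small slip: the hypothesis bounds $|P_{i+1}-P_i|$ on the strip of width $h_i$ by $C_0e^{-t_i}$, not $e^{-t_{i+1}}$; apply Cauchy on $\T^d_{h_i}$ (as the paper does) rather than $\T^d_{h_{i+1}}$ so that the polynomial $t_i^{(\rho-1)|k|}$ and the exponential $e^{-t_i}$ carry the same index, otherwise a factor $\delta^{-|k|}$ creeps into $c_0$.

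The only substantive variation is in the summation. The paper takes $m=[(\rho-1)(|k|+2)]+1$ in the inequality $x^m e^{-x}\le m!$ (the ``$+2$'' is the origin of the $L^2$ in the statement), which manufactures an extra factor $h_j$ in $\|P_{j+1}-P_j\|_{c_0L}$ and reduces the tail to the plain geometric series $\sum_{i\ge j} h_i\le h_j/(1-\delta)$. Your Bernoulli bound on $\sum_{i\ge j}e^{-t_i/2}$ achieves the same thing and, for $t_j\ge 1$, is in fact sharper since it avoids the $L^2$ loss; only in the borderline regime $t_j<1$ do you need to invoke the $L^2$ budget to absorb the $\log L$ block count.
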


\begin{proof} For any $k\in\N^d$ and $j\ge 0$ one obtains by Cauchy
\[
\sup_{\theta\in \T^d} \left|\partial^k(P_{j+1}(\theta)-P_{j}(\theta))\right| \le  C_0  k !\,  h_j^{-|k|-1}e^{-(L h_j)^{-1/(\rho-1)}}.
\]
Using the inequality $x^m e^{-x} \le m !$ for 
\[
x= \frac{1}{2}(L h_j)^{-1/(\rho-1)}\ \mbox{and}\ m= [(\rho-1)(|\alpha|+2)] +1) ,
\] 
where $[x] = \inf\{m\in\Z:\, x\ge m\}$ stands for the integer part of $x\in\R$,  one gets the following estimate
\[
\begin{array}{lcrr}
\displaystyle \sup_{\theta\in \T^d} \left|\partial^k(P_{j+1}(\theta)-P_{j}(\theta))\right| \\[0.3cm]
\displaystyle  \le  C_0 (2^{\rho-1}L)^{|k|+2} k !\,   ([(\rho-1)(|k|+2)] +1)!\,  h_j\,  e^{-\frac{1}{2}(L h_j)^{-1/(\rho-1)}}
\end{array}
\]
for $j\gg 1$. 
On the other hand, using  the properties of the Gamma function, one obtains
\[
\begin{array}{lcrr}
([(\rho-1)(|k|+2)] +1)!\, = \Gamma([(\rho-1)(|k|+2)] +2) \le \Gamma((\rho-1)(|k|+2) +2)\\[0.3cm]
 \le c_1^{|k|+1}\Gamma(|k|+1)^{\rho-1}= c_1^{|k|+1} |k|!\, ^{\rho-1} \le c_2^{|k|+1} k!\, ^{\rho-1}
\end{array}
\]
where $c_1= c_1(\rho)\ge 1$ and  $c_2= c_2(\rho,d)\ge 1$. Setting $c_0:= 2^{\rho-1} c_2$ this  implies
\[
\|P_{j+1}-P_{j}\|_{c_0L} \le  C_0 (c_0 L)^{2}   h_j e^{-\frac{1}{2}(L h_j)^{-1/(\rho-1)}},
\]
and we get
\[
\|P_{m}-P_{j}\|_{c_0L} \le  C_0 (c_0 L)^{2}  e^{-\frac{1}{2}(L h_j)^{-1/(\rho-1)}} \frac{\delta^j}{1-\delta}
\]
for $m>j\gg 1$, hence, the sequence $P_m= P_0 + \sum_{j=1}^m (P_j
-P_{j-1})$ is Cauchy in  ${\mathcal G}_{c_0L}^\rho(\T^d,\Omega)$, which is
a complete space since $S$ is closed.  Taking the limit as
$m\to\infty$ we get $P\in {\mathcal
G}_{c_0L}^\rho(\T^d,\Omega)$ and the estimate of $P-P_j$. 
\end{proof}

\begin{Corollary}
The sequence $P_j$ in Proposition \ref{app-lem} satisfies the estimate
\[
\|P-P_{j}\|_{c_0L} \le  \frac{c c_0}{1-\delta} L^{d+2}
e^{-\frac{1}{2}(cL h_j)^{-1/(\rho-1)}}\|P\|_{L}.
\]
for some $c=c(\rho,d)>0$, $c=c(\rho)>0 $ and $c_0=c_0(\rho,d)>0$.
\end{Corollary}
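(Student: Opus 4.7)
The Corollary is essentially a bookkeeping consequence of combining the two preceding propositions, so my plan is to apply the Inverse Approximation Lemma directly to the sequence $P_j$ constructed in the Approximation Lemma, after a suitable relabeling of constants.

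First, I would extract from Proposition \ref{app-lem} the telescoping estimate
\[
|P_{j+1}-P_j|_{h_{j+1}} \le C_0 L^d\, e^{-(cLh_j)^{-1/(\rho-1)}}\|P\|_L,
\]
and observe that it fits the form required by Proposition \ref{inv-app-lem} after setting the effective constants $\widetilde{C}_0 := C_0 L^d \|P\|_L$ and $\widetilde L := cL$, since then $(\widetilde L h_j)^{-1/(\rho-1)} = (cLh_j)^{-1/(\rho-1)}$. (If one is worried about the subscript $h_j$ versus $h_{j+1}$ in the hypothesis of Proposition \ref{inv-app-lem}, the monotonicity $h_{j+1}<h_j$ gives $|\cdot|_{h_{j+1}} \le |\cdot|_{h_{j+1}}$ on the smaller strip, which is all the Inverse Approximation Lemma ever uses in its Cauchy estimate.)

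Second, I would invoke Proposition \ref{inv-app-lem} with parameters $(\widetilde{C}_0, \widetilde L)$ to obtain a limit function $\widetilde P \in \mathcal{G}^\rho_{c_0\widetilde L}(\T^d, u(n))$ together with the estimate
\[
\|\widetilde P - P_j\|_{c_0\widetilde L} \le \frac{C\widetilde{C}_0}{1-\delta}\,\widetilde L^{\,2}\, e^{-\frac{1}{2}(\widetilde L h_j)^{-1/(\rho-1)}}.
\]
Substituting $\widetilde C_0 = C_0 L^d\|P\|_L$ and $\widetilde L = cL$, and absorbing the numerical factors of $c$ into the constants denoted $c$ and $c_0$ in the Corollary, this matches the claimed bound with a prefactor of $L^{d+2}\|P\|_L$.

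Third, I would identify $\widetilde P$ with $P$. This is the only non-automatic point, but it is painless: Proposition \ref{app-lem} gives uniform convergence $P_j \to P$ on $\T^d$, while the Gevrey convergence $P_j \to \widetilde P$ in $\mathcal{G}^\rho_{c_0 \widetilde L}$ implies in particular uniform convergence on $\T^d$ (the $k=0$ term in the Gevrey norm). Uniform limits coincide, so $\widetilde P = P$.

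There is no genuine analytic difficulty here; the main step demanding care is the constant-chasing — in particular, noticing that the rescaling $L\mapsto cL$ needed to absorb the constant $c(\rho)$ from Proposition \ref{app-lem} into the exponent must be tracked through both the Gevrey index on the left-hand side (producing $c_0 L$ after redefining $c_0$) and through the polynomial prefactor (producing the final $L^{d+2}$).
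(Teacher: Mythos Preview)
Your proposal is correct and is exactly what the paper intends: the Corollary is stated there without proof, as an immediate consequence of feeding the telescoping estimates of Proposition~\ref{app-lem} into Proposition~\ref{inv-app-lem} and identifying the limit with $P$ via the uniform convergence already given in Proposition~\ref{app-lem}. The only mild care needed is the $h_j$ versus $h_{j+1}$ mismatch you flag (harmless since $h_{j+1}=\delta h_j$ with $\delta$ fixed, so it is absorbed into the constants), and you handle it correctly.
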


\subsection{Measurable functions with values in $U(n)$. }\label{sec:measurable-functions}
In this section we introduce the important invariant  $\lceil B \rfloor$ for measurable functions $B:\T^d \to M_n$ and the sets $\Gamma$ and $\Pi$, which we need in the KAM step and in the Iterative Lemma. This sets give information on the quantity  $\lceil \cdot \rfloor$ under truncation of the Fourier series of $B$ and under multiplication. 

Consider the  Fourier expansion 
\[B(\theta)\sim\sum_{k\in\Z^d}\widehat{B}(k)e^{2\pi  i\langle k,\theta\rangle}\]
of a  measurable function $B:\T^d \to M_n$ and denote by 
$\widehat{b}_{p,q}(k)$  the $(p,q)$ entry of $\widehat{B}(k)$. Note that any measurable function  $B:\T^d\rightarrow
U(n)$ is always in $L^2(\T^d, M_n)$ since  $|B(\theta)|=1$ for any $\theta\in\T^d$ (the norm $|\cdot|$ on $M_n=M_n(\C)$ is fixed in Sect. \ref{Sect:analytic}).  
To measure the minimal size of the rows of the $n\times n$ matrix $\widetilde B$ with entries 
\begin{eqnarray}\label{b-tilde}
\widetilde b_{p,q}:= \sup_{k\in\Z^d}\, |\widehat{b}_{p,q}(k)|
\end{eqnarray}
we define
\begin{eqnarray*}
\displaystyle \lceil B \rfloor_0 : = \min_{1\leq p\leq n}\, \max_{1\leq q\leq n}\, |\widetilde{b}_{p,q}|  = \min_{1\leq p\leq n}\, \sup\, \{|\widehat{b}_{p,q}(k)|:\, k\in\Z^d,\,  1\le q\le n \} .
\end{eqnarray*}
The equality $\lceil B\rfloor_0=0$ means that there is a row of the matrix  $\widetilde {B}$ equal to $0$, or equivalently that there is a row of $B$ which is zero for a.e.  $\theta\in\T^d$, which 
implies that $\det B(\theta)=0$ for a.e.
$\theta\in\T^d$. In particular,  
\begin{equation}
B:\T^d\rightarrow U(n)\ \mbox{measurable}\ \Longrightarrow \ \lceil B \rfloor_0>0
\label{eq:unitary-non-zero}
\end{equation}
since  $|\det
B(\theta)|=1$.  
On the other hand,  $\lceil B \rfloor_0\ge \epsilon $ if and only if  for any $p\in \{1,\cdots,n\}$ there is $q\in \{1,\cdots,n\}$   and $k\in \Z^d$ such
that 
\begin{equation}
\label{eq:epsilon-non-degenerate}
|\widehat{b}_{p,q}(k)|\geq \epsilon . 
\end{equation} 
The quantity $\lceil \quad \rfloor_0$ has the following properties. 
\begin{Lemma}\label{Lemma:0-norm}
\begin{enumerate}
\item For any measurable $B:\T^d \to M_n$ and $T\in U(n)$, 
\[
\lceil B T \rfloor_0 \,  \ge \, \frac{1}{n} \lceil B  \rfloor_0 . 
\]
\item Let   $W(\theta)=\exp\left(2\pi i\, {\rm diag\, }(\langle k^{(1)},\theta \rangle ,\dots, \langle k^{(n)},\theta \rangle)\right)$, where $k^{(1)}, \ldots, k^{(n)}\in\Z^d$. Then
\[
 \forall\, S,T\in U(n), \quad \lceil SWT \rfloor_0 \geq n^{-3/2}, 
\]
\item For any constant function $B\in U(n)$, $\lceil B \rfloor_0 \geq 1/\sqrt{n}$.
\end{enumerate}
\end{Lemma}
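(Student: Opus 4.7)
All three statements rest on two elementary facts: Parseval's identity, and the comparison $\|v\|_\infty \ge \|v\|_2 / \sqrt{n}$ for $v\in\C^n$. The unitarity assumptions on $T$, $S$ and the diagonal matrix $W(\theta)$ will enter through the invariance of the Euclidean norm of a row under right multiplication by a unitary matrix, together with the fact that all three of $B$ (constant), $W$, and $SWT$ have unit-norm rows.

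For (1) I would fix $p\in\{1,\ldots,n\}$; by definition of $\lceil B\rfloor_0$, for any $\epsilon>0$ there exist $k_p\in\Z^d$ and $q_p$ with $|\widehat b_{p,q_p}(k_p)|>\lceil B\rfloor_0-\epsilon$. Let $v_p\in\C^n$ denote the $p$-th row of $\widehat B(k_p)$. Since $\widehat{BT}(k)=\widehat B(k)\,T$, the $p$-th row of $\widehat{BT}(k_p)$ is $v_pT$, and unitarity of $T$ gives $\|v_pT\|_2=\|v_p\|_2\ge\|v_p\|_\infty>\lceil B\rfloor_0-\epsilon$. Combining this with the inequality $\|v_pT\|_\infty\ge\|v_pT\|_2/\sqrt n$, letting $\epsilon\to 0$, and taking the minimum over $p$ yields $\lceil BT\rfloor_0\ge\lceil B\rfloor_0/\sqrt n\ge\lceil B\rfloor_0/n$. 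Part (3) is the same idea without the Fourier machinery: for a constant $B\in U(n)$ the only nonzero Fourier coefficient is $\widehat B(0)=B$, so $\widetilde b_{p,q}=|b_{p,q}|$; since each row of the unitary $B$ is a unit vector of $\C^n$ its sup-norm is at least $1/\sqrt n$, whence $\lceil B\rfloor_0\ge 1/\sqrt n$.

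For (2), the decisive observation is that $SWT$ is again unitary (product of unitaries), so each of its rows is a unit vector pointwise, i.e.\ $\sum_q|(SWT)_{p,q}(\theta)|^2\equiv 1$ on $\T^d$. Integrating over $\T^d$ and applying Parseval yields $\sum_{k,q}|(\widehat{SWT})_{p,q}(k)|^2=1$. The crucial second input is that each entry $(SWT)_{p,q}(\theta)=\sum_a s_{p,a}t_{a,q}e^{2\pi i\langle k^{(a)},\theta\rangle}$ is a trigonometric polynomial whose Fourier support lies in the finite set $\{k^{(1)},\ldots,k^{(n)}\}$; hence for each of the $n$ values of $q$ at most $n$ frequencies contribute, so the Parseval sum contains at most $n\cdot n=n^2$ nonzero terms. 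Pigeonhole then produces some $(q,k)$ for which $|(\widehat{SWT})_{p,q}(k)|\ge 1/n\ge n^{-3/2}$, giving $\max_q\widetilde{(swt)}_{p,q}\ge n^{-3/2}$, and taking the minimum over $p$ finishes the proof. There is really no obstacle in any of the three parts; the only mild subtlety worth noting is that the Fourier support restriction from the diagonal structure of $W$ is precisely what reduces the Parseval sum from a potentially infinite one to a sum of at most $n^2$ terms, which is what makes the pigeonhole step effective.
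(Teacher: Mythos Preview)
Your proof is correct. Parts (1) and (3) follow the same line as the paper --- expand the $p$-th row of $\widehat{BT}(k)$ as $v_pT$ and use that a unitary preserves the Euclidean norm of a row, then compare $\ell^\infty$ and $\ell^2$ norms in $\C^n$. You even obtain the sharper constant $1/\sqrt{n}$ in (1), whereas the paper states only $1/n$.

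Part (2) is where your route genuinely differs. The paper factors $SWT=(SW)T$, observes that the $(p,q)$ entry of $SW(\theta)$ is simply $S_{p,q}e^{2\pi i\langle k^{(q)},\theta\rangle}$ (a single Fourier mode with modulus $|S_{p,q}|$), and then uses the unitarity of $S$ to get $\lceil SW\rfloor_0\ge n^{-1/2}$; finally it applies part (1) to absorb the right factor $T$ at the cost of an extra $1/n$, yielding $n^{-3/2}$. Your argument instead keeps the full product $SWT$, uses Parseval on its unit-norm rows, and exploits that each entry has Fourier support of size at most $n$, so the Parseval sum has at most $n^2$ nonzero terms and pigeonhole gives a term of size at least $1/n$. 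Your approach is self-contained (it does not invoke part (1)) and delivers the better bound $1/n\ge n^{-3/2}$; the paper's approach is slightly more transparent in that the Fourier coefficients of $SW$ are written down explicitly, which makes the mechanism visible without the Parseval step. Both are elementary and neither has a real advantage for the applications later in the paper.
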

\begin{proof}
Set $Q=BT$ and denote by $\hat Q_{p,q}(k)$, $1\le p,q\le n$,  and $\hat Q_{p}(k)$, $1\le p\le n$, the corresponding entries and rows of $\hat Q(k)$, $k\in\Z^d$. The rows  $T_q^\ast$ of $T^\ast$, $1\le q\le n$, form an orthonormal basis of $\C^n$ and we get
\[
\max_{1\le q\le n} \left| \hat Q_{p,q}(k) \right| = \max_{1\le q\le n} \left|\langle \hat B_{p}(k), T_q^\ast \rangle \right| \ge  
\frac{1}{n} \| \hat B_{p}(k) \| \ge \frac{1}{n} \max_{1\le q\le n}  \left| \hat B_{p,q}(k) \right|
\]
which proves the first part of the Lemma. 
To prove the second one, it will be enough to show that for any $S=(S_{p,q})_{1\le p,q\le n}\in U(n)$
\[\lceil SW\rfloor_0 \geq n^{-1/2}.\]
  The $(p, q)$ entry of $SW(\theta)$ is $S_{p,q}e^{2\pi i \langle k^{(q)},\theta\rangle}$.  
For any given $p$, $\sum_{q=1}^{n}|S_{p,q}|^2=1$, so there exists $q$ such that $|S_{p,q}|\geq n^{-1/2}$.  Hence, for any given $p$
 there exists $q$  such that $|S_{p,q}e^{2\pi i \langle k^{(q)}, \theta \rangle}|\geq n^{-1/2}$, which implies that $\lceil SW\rfloor_0 \geq n^{-1/2}$. 
 We have also shown in particular that $\lceil B \rfloor_0 \ge  n^{-1/2}$ for any constant $B\equiv S\in U(n)$, which is the third conclusion. 
\end{proof}

In general, the quantity $\lceil B \rfloor_0$ can not be controlled  when multiplying $B$  by a matrix $S\in U(n)$ from the left. To make it invariant with respect to the choice of the unitary bases in $\C^n$ or under  multiplication with $S,T\in U(n)$ from both left and  right, we  define
\begin{eqnarray*}
\displaystyle \lceil B \rfloor :\ =\ \inf_{S,T\in U(n)}\, \lceil SB(\cdot)T \rfloor_0  .
\end{eqnarray*} 
Thus $\lceil B \rfloor=0$ if and only if there are constant matrices $S,T\in U(n)$ such that the first row of the matrix $S\widetilde BT$ (the definition of $\widetilde B $ is given in (\ref{b-tilde})) is zero, or equivalently, the first row of $SB(\theta)T$ is zero for a.e. $\theta\in \T^d$. Such maps $B$ will be called totally degenerate. 
We say that $B:\T^d\to M_n$ is $\epsilon$-non-degenerate if $\lceil B \rfloor\ge \epsilon $. 
\begin{Lemma}\label{Lemma:main-norm} 
\begin{enumerate}
\item For any constant function $B\in U(n)$ we have  $\lceil B \rfloor=1/\sqrt{n}$.  
\item $\lceil B \rfloor >0$ for  any measurable   $B:\T^d\rightarrow
U(n)$. 
\item  Set  $W(\theta)=\exp\left(2\pi i\, {\rm diag\, }(\langle k^{(1)},\theta \rangle ,\dots, \langle k^{(n)},\theta \rangle)\right)$ where  $k^{(1)}, \ldots, k^{(n)}$ belong to $\Z^d$. Then $\lceil W \rfloor \geq n^{-3/2}$.
\end{enumerate}
\end{Lemma}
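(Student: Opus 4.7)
The three claims admit rather different arguments, so I would treat them separately, starting with the easy parts and spending most of the effort on (2).

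For part (1), the key observation is that when $B$ is constant, $\widehat B(k) = B\,\delta_{k,0}$, so $\widetilde b_{p,q}=|B_{p,q}|$ and $\lceil B\rfloor_0 = \min_p \max_q |B_{p,q}|$. For a unit row vector $(B_{p,1},\dots,B_{p,n})\in \C^n$ we have $\max_q |B_{p,q}|\ge n^{-1/2}$, giving $\lceil B\rfloor_0\ge n^{-1/2}$. Since $SBT\in U(n)$ is still constant for any $S,T\in U(n)$, the same bound applies to $SBT$, whence $\lceil B\rfloor\ge n^{-1/2}$. For the reverse inequality I would exhibit explicit $S,T\in U(n)$ realizing equality: take $T=I$ and $S=F B^{\ast}$, where $F$ is the DFT matrix $F_{p,q}=n^{-1/2}\exp(2\pi i(p-1)(q-1)/n)$. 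Then $SBT=F\in U(n)$ has all entries of modulus exactly $n^{-1/2}$, so $\lceil SBT\rfloor_0 = n^{-1/2}$, forcing $\lceil B\rfloor\le n^{-1/2}$.

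Part (3) is immediate: by the second conclusion of Lemma \ref{Lemma:0-norm}, $\lceil SWT\rfloor_0 \ge n^{-3/2}$ for \emph{every} pair $S,T\in U(n)$, and taking the infimum yields $\lceil W\rfloor\ge n^{-3/2}$.

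The substantive claim is (2). Here the naive approach fails: from \eqref{eq:unitary-non-zero} one knows $\lceil SBT\rfloor_0>0$ for each fixed pair $(S,T)\in U(n)^2$, but not that the infimum over $(S,T)$ is strictly positive. The plan is to show that the map
\[
\Phi:\ U(n)\times U(n)\longrightarrow [0,\infty),\qquad \Phi(S,T):=\lceil SBT\rfloor_0
\]
is continuous on the compact group $U(n)\times U(n)$, whence the infimum is attained at some $(S_0,T_0)$; then $S_0 B T_0$ is a measurable map into $U(n)$, and \eqref{eq:unitary-non-zero} gives $\Phi(S_0,T_0)>0$. Continuity is the technical heart of the argument. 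Since $\widehat{SBT}(k)=S\widehat B(k)T$, the entries $g_k^{p,q}(S,T):=|[S\widehat B(k)T]_{p,q}|$ satisfy the uniform Lipschitz estimate
\[
|g_k^{p,q}(S,T)-g_k^{p,q}(S',T')|\le |\widehat B(k)|\bigl(|S-S'||T|+|S'||T-T'|\bigr)\le |S-S'|+|T-T'|,
\]
using $|\widehat B(k)|\le |B|_{L^\infty}\le 1$ and $|S'|,|T|\le 1$ on $U(n)$. Equicontinuity in $k$ implies $\sup_k g_k^{p,q}$ is continuous in $(S,T)$; then $\max_q$ and $\min_p$ preserve continuity, so $\Phi$ is continuous. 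The compactness-continuity step is the only place where something could go wrong, and the uniform bound $|\widehat B(k)|\le 1$ is precisely what rescues the $\sup_k$ from being merely lower semicontinuous. Once $\Phi$ is known continuous on the compact set $U(n)\times U(n)$, the conclusion is routine.
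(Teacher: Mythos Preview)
Your proof is correct and, for parts (2) and (3), takes essentially the same route as the paper: the paper also uses compactness of $U(n)\times U(n)$ (phrased via a minimizing sequence and accumulation points rather than your explicit Lipschitz-continuity estimate) together with \eqref{eq:unitary-non-zero}, and derives (3) straight from Lemma~\ref{Lemma:0-norm}. For part (1) you actually do more than the paper: the paper only cites Lemma~\ref{Lemma:0-norm}, which strictly speaking yields only the lower bound $\lceil B\rfloor\ge n^{-1/2}$, whereas your DFT-matrix construction supplies the matching upper bound and hence the full equality claimed in the statement.
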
  
\begin{proof}
We are going to prove {\em 2}.  Suppose that there are sequences $\{S_j\}_{j\in\N}, \{T_j\}_{j\in\N}\subset U(n)$ such that 
\[
\lim \lceil S_jBT_j \rfloor_0=0.
\]   
Let $S,T\in U(n)$ be  accumulation points of the sequences $\{S_j\}_{j\in\N}$ and $\{T_j\}_{j\in\N}$. Then $SBT:\T^d\to U(n)$ is again measurable and one can easily show that  
$\lceil SBT \rfloor_0 =0$ which leads to  a contradiction to \eqref{eq:unitary-non-zero}. The first and the third parts of the lemma  follow from Lemma \ref{Lemma:0-norm}.  \end{proof}
 
Given $N\in\N$ and $\epsilon>0$,  we  say that $B\in L^2(\T^d, M_n)$ is $(N,\epsilon)$-non-degenerate if the truncated Fourier series of $B$ up order $N$ is  $\epsilon$-non-degenerate, i.e.  
\[  
\lceil T_N B\rfloor\geq\epsilon, 
\]
where   $T_N B$ is defined in  Sect. \ref{Sect:analytic}. We denote by   $\Gamma(N,\epsilon)$ the set of $(N,\epsilon)$-non-degenerate maps $B\in L^2(\T^d, M_n)$. 
 We point out that the definition of $\Gamma(N,\epsilon)$ here is different from that in \cite{HY1} - in contrast to \cite{HY1}, the set $\Gamma(N,\epsilon)$ is invariant under the action of $U(n)$  from both left and right on the target space $M_n$. This set  has the following properties which can be easily checked as in  \cite{HY1},  Lemma 3.1.
\begin{Lemma}\label{Lemma:Gamma} 
\begin{enumerate}
\item $S\Gamma(N,\epsilon)T =\Gamma(N,\epsilon)$ for any $S,T\in U(n)$
\item $B\in
\Gamma(N,\lceil B \rfloor /2)$ for  $N$ large enough since $\lim_{N\rightarrow \infty} \lceil T_N B \rfloor = \lceil B \rfloor $
\item  $\Gamma(N,\epsilon) W \subseteq \Gamma(N+\widetilde N ,\epsilon/n)$, 
where 
\[
W(\theta):= \exp\left(2\pi i\, {\rm diag\, }(\langle k^{(1)},\theta \rangle ,\dots, \langle k^{(n)},\theta \rangle)\right), 
\] 
\[
k^{(1)},\cdots,k^{(n)}\in \Z^d \quad \mbox{and}\quad \max\{|k^{(1)}| , \cdots , |k^{(n)}|\} \le
\widetilde N
\]
\item $\Gamma(N,\epsilon)P\subseteq \Gamma(N,\epsilon -\varepsilon)$  for any measurable $P:\T^d\rightarrow U(n)$ with
\[\varepsilon=\sup_{\theta\in \T^d}|P(\theta)-I|.\]
\end{enumerate}
\end{Lemma}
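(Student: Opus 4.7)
The four properties of $\Gamma(N,\epsilon)$ will all be checked by direct Fourier-analytic computations. The underlying observations are two: truncation commutes with constant matrix multiplication, $T_N(SBT)=ST_N(B)T$; and the entrywise maxima $\widetilde{B}_{p,q}=\sup_k|\widehat b_{p,q}(k)|$ increase monotonically under truncation.

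Property (1) is essentially tautological: since $T_N(SBT)=ST_N(B)T$ and $\lceil \cdot\rfloor$ is defined as an infimum over $U(n)\times U(n)$ and is therefore automatically invariant under constant unitary multiplications from either side, we have $\lceil T_N(SBT)\rfloor = \lceil ST_N(B)T\rfloor = \lceil T_N(B)\rfloor$. For property (4), I will write $BP = B + B(P-I)$; since $|B(P-I)(\theta)|\le \varepsilon$ uniformly, every Fourier coefficient of $SB(P-I)T$ has norm at most $\varepsilon$. For each $(S,T)\in U(n)\times U(n)$ and each row index $p$, I pick a column $q^\ast$ and a frequency $k^\ast$ with $|k^\ast|\le N$ achieving $|\widehat{ST_N(B)T}_{p,q^\ast}(k^\ast)|\ge \lceil T_N(B)\rfloor\ge \epsilon$; the corresponding Fourier coefficient of $ST_N(BP)T$ at $(p,q^\ast,k^\ast)$ then has modulus at least $\epsilon-\varepsilon$. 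Taking $\min_p$ and then $\inf_{S,T}$ delivers $\lceil T_N(BP)\rfloor \ge \epsilon-\varepsilon$.

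Property (2) reduces to justifying the limit $\lim_{N\to\infty}\lceil T_N B\rfloor = \lceil B\rfloor$. For each fixed $(S,T)$, the scalar $\lceil ST_N(B)T\rfloor_0$ is non-decreasing in $N$ (being a $\min_p\max_q\sup_{|k|\le N}$ of monotone quantities) and converges to $\lceil SBT\rfloor_0$. Taking infimum over $U(n)\times U(n)$ gives a non-decreasing sequence bounded above by $\lceil B\rfloor$; to get the reverse inequality I extract, via compactness of $U(n)\times U(n)$, a subsequential limit $(S_\ast,T_\ast)$ of near-minimizing $(S_N,T_N)$, and exploit continuity of $(S,T)\mapsto \lceil ST_{N_0}(B)T\rfloor_0$ for each fixed trigonometric polynomial $T_{N_0}(B)$, together with monotonicity in $N_0$, to perform a Dini-type interchange of $\lim$ and $\inf$.

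The main technical step is property (3), because multiplication by $W$ shifts the frequency of the $q$-th column of $B$ by a vector $k^{(q)}$ that depends on $q$, so $\widehat{BW}(m)$ is not a simple shift of $\widehat B(m)$. Explicitly, $\widehat{(BW)}_{p,q}(m)=\widehat b_{p,q}(m-k^{(q)})$, so for fixed $S,T\in U(n)$ the $(p,q)$-entry of $\widehat{ST_{N+\widetilde N}(BW)T}(m)$ equals $\sum_{q'}u(m)_{q'}T_{q',q}$, where $u(m)_{q'}=\sum_r S_{p,r}\widehat b_{r,q'}(m-k^{(q')})$. The plan is to exploit unitarity of $T$: writing $v(m)=u(m)T$, unitarity gives $\|v(m)\|_\infty \ge n^{-1/2}\|v(m)\|_2 = n^{-1/2}\|u(m)\|_2 \ge n^{-1/2}\|u(m)\|_\infty$. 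For each $q'$, the restricted frequencies $m=k+k^{(q')}$ with $|k|\le N$ lie in the truncation window $|m|\le N+\widetilde N$ (since $|k^{(q')}|\le \widetilde N$), and under this restriction $u(m)_{q'}=(S\widehat B(k))_{p,q'}$, so $\sup_{|m|\le N+\widetilde N}|u(m)_{q'}|\ge \widetilde{ST_N(B)}_{p,q'}$. Chaining the $\sup$ over $m$, the $\max$ over $q$ (using $\max_q\sup_m = \sup_m\max_q$ for finite $q$), the $\min$ over $p$, and the $\inf$ over $(S,T)$ yields $\lceil T_{N+\widetilde N}(BW)\rfloor \ge n^{-1/2}\lceil T_N B\rfloor \ge \epsilon/n$, as claimed.
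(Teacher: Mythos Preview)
Your proof is correct and follows essentially the same approach as the paper's. The paper only sketches item~(3), reducing the right-multiplication by $T$ to Lemma~\ref{Lemma:0-norm}(1) (which is exactly your $\ell^2$--$\ell^\infty$ estimate, stated with the looser constant $1/n$ rather than your sharper $n^{-1/2}$), and defers items~(1), (2), (4) to \cite{HY1}; your compactness argument for the interchange of $\inf$ and $\lim$ in~(2) supplies detail that the paper omits.
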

\begin{proof} 
We shall sketch the proof of {\em 3}, the other items follow immediately from the proof of    Lemma 3.1 \cite{HY1}. Take $B\in \Gamma(N,\epsilon)$, $S\in U(n)$ and  set $E:= S B$. The Fourier coefficients of $EW$ and $E$ are related by the identity 
\[
\widehat{EW}_{p,q}(k) = \widehat{E}_{p,q}(k+ k^{(q)}),\quad 1\le p,q\le n,\ k\in\Z^n.
\]
In particular, for any $p\in\{1,\ldots, n\}$ fixed and $k\in \Z^n$ with $|k|\le N$ there is $q\in\{1,\ldots, n\}$ and $l\in \Z^n$ with $|l|\le N+\widetilde N$ such that $\widehat{E}_{p,q}(k) = \widehat{EW}_{p,q}(l- k^{(q)})$, hence, $\lceil T_{N+\widetilde N}EW \rfloor_0 \ge \lceil T_{N}E \rfloor_0 $. Now Lemma \ref{Lemma:0-norm} implies  that $\forall S, T\, \in U(n)$
\[
\begin{array}{lcrr}
\displaystyle \lceil S T_{N+\widetilde N} B W T \rfloor_0  \ge \frac{1}{n}\lceil S T_{N+\widetilde N} B W\rfloor_0  \\[0.3cm]
\displaystyle \ge \frac{1}{n}\lceil T_{N} S B \rfloor_0 =\frac{1}{n}\lceil S T_{N} B \rfloor_0 \ge \lceil T_{N}B \rfloor \ge \frac{\epsilon}{n}
\end{array}
\]
and we get $\lceil T_{N+\widetilde N}BW \rfloor \ge  \epsilon/n $. \end{proof}


The set $\Gamma(N,\epsilon)$ provides information of the quantity $ \lceil \cdot\rfloor$ after truncating the Fourier series of a function up to order $N$, which is needed in KAM step.  
In order to evaluate  $ \lceil \cdot\rfloor$ for the product of two functions $PB$ where $P$ is $L^2$ and $B$  in $\Gamma(N,\delta)$ (this occurs in the Iterative Lemma below),  it is convenient to introduce the following notation. For any $\widetilde N\in\N$ and $\xi,\, \varepsilon\in \R$ we denote by
$\Pi(\widetilde N, \xi, \varepsilon)$ the set of all  $P\in
L^2(\T^d, M_n)$ such that the operator of multiplication from the left by $P$ maps $\Gamma(N,\delta)$ into $\Gamma(N+\widetilde N, \xi\delta- \varepsilon)$, i.e. 
\begin{equation}
B\in \Gamma(N,\delta)\quad \Rightarrow \quad BP  \in \Gamma(N+\widetilde N, \xi\delta- \varepsilon).
\label{eq:Pi}
\end{equation}
The above relation means that $PB$ is $(N+\widetilde N, \xi\delta- \varepsilon)$-non-degenerate  if  $B$ is $(N,\delta)$-non-degenerate. 
The definition of the sets $\Gamma(N,\epsilon)$ and $\Pi(\widetilde N, \xi, \varepsilon)$ seems technical  but it turns out to be quite helpful in Sections \ref{iter} and \ref{Gevrey reducibility}. 
Using  the definition of
$\Pi$ and  Lemma \ref{Lemma:Gamma} we obtain 
\begin{Lemma}\label{Lemma:Pi} 
\begin{enumerate} 
\item $S\in \Pi(0,1, 0)$ for any $S\in U(n)$, 
\item  $S\Pi(\widetilde N, \xi, \varepsilon)T= \Pi(\widetilde N, \xi, \varepsilon)$ for any $S,T\in U(n)$,
\item  The map $\theta\to \exp\left(2\pi i\, {\rm diag\, }(\langle k^{(1)},\theta \rangle ,\dots, \langle k^{(n)},\theta \rangle)\right)$ belongs to $\Pi(\widetilde N, 1/n, 0)$  provided that
\[k^{(1)},\cdots,k^{(n)}\in \Z^d \quad and\quad \max\{|k^{(1)}| ,\cdots, |k^{(n)}|\} \le \widetilde
N,\]
\item $P\in \Pi(0,
1, \varepsilon)$  for any measurable $P:\T^d\rightarrow U(n)$ with
$\varepsilon=\sup_{\theta\in \T^d}|P(\theta)-I| ,$
\item $\Pi(\widetilde N, \xi, \varepsilon_1) \subseteq \Pi(\widetilde N, \xi, \varepsilon_2)$ if $\varepsilon_1 \ge \varepsilon_2$.
\end{enumerate}
\end{Lemma}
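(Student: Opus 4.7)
My plan is to derive each of the five items directly from its counterpart in Lemma \ref{Lemma:Gamma}, together with the definition \eqref{eq:Pi} of $\Pi$ and the $U(n)$-bi-invariance of $\lceil\cdot\rfloor$ that is built into its definition $\lceil B\rfloor=\inf_{S',T'\in U(n)}\lceil S'BT'\rfloor_0$. In each case I will take an arbitrary $B\in\Gamma(N,\delta)$, form the product $BP$ for the claimed $P$, and verify that the result lies in the target set $\Gamma(N+\widetilde N,\xi\delta-\varepsilon)$.

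Items 1, 3, and 4 are essentially translations of the corresponding parts of Lemma \ref{Lemma:Gamma}. For item 1 with $P\equiv S\in U(n)$ constant, the identity $T_N(BS)=(T_N B)S$ combined with Lemma \ref{Lemma:Gamma}(1) (or directly with the right $U(n)$-invariance of $\lceil\cdot\rfloor$) gives $BS\in\Gamma(N,\delta)$, i.e.\ $S\in\Pi(0,1,0)$. Item 3 is exactly Lemma \ref{Lemma:Gamma}(3), $\Gamma(N,\delta)W\subseteq\Gamma(N+\widetilde N,\delta/n)$, reread through \eqref{eq:Pi} as $W\in\Pi(\widetilde N,1/n,0)$; the factor $1/n$ is the one produced by Lemma \ref{Lemma:0-norm}(1). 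Item 4 is the analogous rewriting of Lemma \ref{Lemma:Gamma}(4): the hypothesis $\sup_\theta|P(\theta)-I|=\varepsilon$ yields $BP\in\Gamma(N,\delta-\varepsilon)$, i.e.\ $P\in\Pi(0,1,\varepsilon)$.

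Items 2 and 5 need a short additional argument. For item 2, I take $P\in\Pi(\widetilde N,\xi,\varepsilon)$ and $S,T\in U(n)$, and compute $B(SPT)=(BS)P\cdot T$. Lemma \ref{Lemma:Gamma}(1) gives $BS\in\Gamma(N,\delta)$, so the hypothesis on $P$ places $(BS)P$ in $\Gamma(N+\widetilde N,\xi\delta-\varepsilon)$; a second use of Lemma \ref{Lemma:Gamma}(1) absorbs the trailing $T$, proving $SPT\in\Pi(\widetilde N,\xi,\varepsilon)$. The reverse inclusion follows by running the same argument with $S^\ast$ and $T^\ast$ in place of $S$ and $T$. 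Item 5 is pure monotonicity: only the threshold $\xi\delta-\varepsilon$ in the target $\Gamma(N+\widetilde N,\xi\delta-\varepsilon)$ depends on $\varepsilon$, and since $\Gamma(M,\epsilon)\supseteq\Gamma(M,\epsilon')$ whenever $\epsilon\le\epsilon'$, the set $\Pi(\widetilde N,\xi,\varepsilon)$ is monotone in $\varepsilon$, from which the claimed inclusion follows.

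I do not foresee a genuine obstacle, since Lemma \ref{Lemma:Gamma} already contains the substantive content and this lemma merely repackages those properties in the multiplicative language of $\Pi$, which is the form needed in the Iterative Lemma of Section \ref{iter}. The only mildly delicate step is item 2, where the left and right unitary factors have to be tracked separately and the $U(n)$-bi-invariance of $\Gamma$ invoked on each side in turn.
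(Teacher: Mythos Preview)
Your approach is exactly the one the paper takes: the paper's entire proof is the single sentence ``Using the definition of $\Pi$ and Lemma \ref{Lemma:Gamma} we obtain'' preceding the statement, and you have simply unpacked this item by item. Your treatment of items 1--4 is correct and matches the intended derivation.

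One small point on item 5: your monotonicity argument is right, but check the direction. From $\Gamma(M,\epsilon)\supseteq\Gamma(M,\epsilon')$ for $\epsilon\le\epsilon'$ you get that $\Pi(\widetilde N,\xi,\varepsilon)$ is \emph{increasing} in $\varepsilon$, i.e.\ $\Pi(\widetilde N,\xi,\varepsilon_1)\subseteq\Pi(\widetilde N,\xi,\varepsilon_2)$ when $\varepsilon_1\le\varepsilon_2$, not when $\varepsilon_1\ge\varepsilon_2$ as printed. The inequality in the statement is evidently a typo --- the application in \eqref{eq:R} passes from a smaller $\varepsilon$ to a larger one --- so your argument proves the version actually used, but you should flag the discrepancy rather than say ``the claimed inclusion follows.''
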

The set $\Pi$ behaves nicely under multiplication. It obeys the following simple rule which allows us to keep control on the quantity $ \lceil \cdot\rfloor$ in the Iterative Lemma. 
\begin{Lemma}\label{Lemma:product}
If $P_1\in\Pi(\widetilde N_1, \xi_1, \varepsilon_1)$ and $P_2\in
\Pi(\widetilde N_2, \xi_2, \varepsilon_2)$, then \[P_1P_2\in
\Pi(\widetilde N_1+\widetilde N_2, \xi_1\xi_2,
\xi_2\varepsilon_1+\varepsilon_2).\]
\end{Lemma}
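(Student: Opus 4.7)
The plan is to unfold the definition of $\Pi$ twice and rely only on the associativity of matrix multiplication. The definition \eqref{eq:Pi} of $\Pi(\widetilde N,\xi,\varepsilon)$ is a uniform statement: for \emph{every} $N\in\N$ and every $\delta$, if $B\in\Gamma(N,\delta)$ then $BP\in\Gamma(N+\widetilde N,\xi\delta-\varepsilon)$. With this in mind the proof is a one-line chase; the only thing to verify is that the intermediate ``deficiency parameter'' produced by applying the hypothesis on $P_1$ can be fed back into the hypothesis on $P_2$.

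Concretely, I would fix arbitrary $N\in\N$, $\delta>0$ and $B\in\Gamma(N,\delta)$. Applying the assumption $P_1\in\Pi(\widetilde N_1,\xi_1,\varepsilon_1)$ yields
\[
BP_1\in\Gamma(N+\widetilde N_1,\,\xi_1\delta-\varepsilon_1).
\]
Now set $N':=N+\widetilde N_1$ and $\delta':=\xi_1\delta-\varepsilon_1$, and apply the assumption $P_2\in\Pi(\widetilde N_2,\xi_2,\varepsilon_2)$ to $BP_1\in\Gamma(N',\delta')$ to obtain
\[
(BP_1)P_2\in\Gamma(N'+\widetilde N_2,\,\xi_2\delta'-\varepsilon_2).
\]
By the associativity $B(P_1P_2)=(BP_1)P_2$ and the identity
\[
\xi_2\delta'-\varepsilon_2=\xi_1\xi_2\delta-(\xi_2\varepsilon_1+\varepsilon_2),
\]
we conclude
\[
B(P_1P_2)\in\Gamma\bigl(N+\widetilde N_1+\widetilde N_2,\ \xi_1\xi_2\delta-(\xi_2\varepsilon_1+\varepsilon_2)\bigr),
\]
which is exactly the defining condition for $P_1P_2\in\Pi(\widetilde N_1+\widetilde N_2,\xi_1\xi_2,\xi_2\varepsilon_1+\varepsilon_2)$.

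There is really no hard step here: the only thing one might worry about is whether the intermediate $\delta'$ must be positive for the second hypothesis to be meaningful, but the definition of $\Pi$ in \eqref{eq:Pi} is stated as an implication valid for all $\delta$, so no positivity hypothesis is needed. The usefulness of the lemma lies in how the three parameters combine: the shift indices $\widetilde N_j$ add, the multiplicative factors $\xi_j$ multiply, while the additive losses $\varepsilon_j$ are combined with the weight $\xi_2$ on $\varepsilon_1$ coming from the second application — this is precisely the bookkeeping that will be needed when iterating multiplications of the conjugating maps in the Iterative Lemma.
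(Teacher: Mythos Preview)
Your proof is correct and follows essentially the same approach as the paper's: unfold the definition of $\Pi$ for $P_1$, then for $P_2$, use associativity $B(P_1P_2)=(BP_1)P_2$, and read off the combined parameters. The paper's argument is identical apart from being slightly terser.
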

\begin{proof}
For any $B\in \Gamma(N,\delta)$, we have
\[  B P_1\in \Gamma(N+\widetilde N_1, \xi_1\delta- \varepsilon_1).\]
Now for  $P_1P_2$, we have
\begin{eqnarray*}
 B(P_1 P_2)=(BP_1) P_2 &\in & \Gamma(N+\widetilde N_1+\widetilde N_2,
\xi_2(\xi_1\delta- \varepsilon_1)- \varepsilon_2) \\
&=& \Gamma(N+(\widetilde N_1+\widetilde N_2), \xi_1\xi_2\delta-
(\xi_2\varepsilon_1+\varepsilon_2)),
\end{eqnarray*}
 which implies that
\[
P_1P_2\in \Pi(\widetilde N_1+\widetilde N_2, \xi_1\xi_2,
\xi_2\varepsilon_1+\varepsilon_2). 
\] 
\end{proof}

The following  assertion gives information on the quantity $\lceil \cdot \rfloor$ for sequences of measurable functions  with values in $U(n)$ when passing to a limit.
\begin{Lemma}\label{Uniform Bound}
Let  $B_m:\T^d\rightarrow U(n)$ 
and $D_m:\T^d\rightarrow U(n)$, $m\in\N$, be two sequences of measurable functions   such that $\lceil D_m \rfloor\geq \delta>0$
and 
\begin{eqnarray*}
\lim_{m\rightarrow\infty}
\int_{\T^d}|B_m(\theta)-D_m(\theta)|d\theta=0,
\end{eqnarray*}
Then \[\underline{\lim}_{m\rightarrow \infty}\lceil B_m \rfloor\geq \delta.\]
\end{Lemma}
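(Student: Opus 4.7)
The plan is to reduce everything to a single uniform comparison of Fourier coefficients, using the fact that the $L^1$ convergence of $B_m - D_m$ controls each Fourier coefficient uniformly in its index. Concretely, for any $F,G\in L^2(\T^d,M_n)$, each matrix entry $\widehat{f}_{p,q}(k)-\widehat{g}_{p,q}(k)$ equals the Fourier coefficient of $f_{p,q}-g_{p,q}$, which is bounded in absolute value by $\int_{\T^d} |f_{p,q}(\theta)-g_{p,q}(\theta)|\, d\theta \le \|F-G\|_{L^1(\T^d,M_n)}$ because entries are dominated by the operator norm. Hence $\sup_{q,k}|\widehat{f}_{p,q}(k)| \ge \sup_{q,k}|\widehat{g}_{p,q}(k)| - \|F-G\|_{L^1}$ for every $p$, and taking the minimum over $p$ gives the key inequality
\[
\lceil F\rfloor_0 \;\ge\; \lceil G\rfloor_0 - \|F-G\|_{L^1}.
\]

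Next I would exploit the unitary invariance of the operator norm: since $|SXT|=|X|$ for all $S,T\in U(n)$ and $X\in M_n$, one has $\|SB_mT - SD_mT\|_{L^1} = \|B_m-D_m\|_{L^1} =: \eta_m$, with $\eta_m\to 0$ by hypothesis. Applying the Fourier estimate above to $F=SB_mT$ and $G=SD_mT$ (for an arbitrary fixed pair $S,T\in U(n)$) yields
\[
\lceil SB_mT\rfloor_0 \;\ge\; \lceil SD_mT\rfloor_0 - \eta_m \;\ge\; \lceil D_m\rfloor - \eta_m \;\ge\; \delta - \eta_m,
\]
where the second inequality uses the definition $\lceil D_m\rfloor = \inf_{S,T\in U(n)}\lceil SD_mT\rfloor_0$.

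Finally, since this lower bound is independent of the pair $S,T$, taking the infimum over $U(n)\times U(n)$ gives $\lceil B_m\rfloor \ge \delta - \eta_m$, and then $\liminf_{m\to\infty}\lceil B_m\rfloor \ge \delta$ as required. I do not expect a real obstacle: the only delicate point is recognizing that $|f_{p,q}|\le |F|$ and $|SXT|=|X|$ for $S,T\in U(n)$, which let the $L^1$ closeness of $B_m$ and $D_m$ propagate uniformly to every entry of every Fourier coefficient of every conjugate $SB_mT$, so that the inf defining $\lceil\cdot\rfloor$ can be handled in one stroke rather than by a compactness/subsequence argument on $U(n)\times U(n)$.
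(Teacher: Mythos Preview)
Your proof is correct and follows essentially the same approach as the paper: both arguments use unitary invariance of the operator norm to reduce to a uniform $L^1$ bound on $S(B_m-D_m)T$, bound every Fourier coefficient by this $L^1$ quantity, deduce $\lceil SB_mT\rfloor_0 \ge \lceil SD_mT\rfloor_0 - \eta_m$, and then take the infimum over $S,T$. Your presentation is slightly cleaner in that you work directly with $\eta_m=\|B_m-D_m\|_{L^1}$ rather than an $\epsilon$--$m_0$ framework, and you make the entrywise step explicit, but the substance is identical.
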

\begin{proof}
Fix $S,T\in U(n)$ and $\epsilon>0$. There exists $m_0>0$ such that
\begin{eqnarray*}
\int_{\T^d} |S(B_m(\theta)-D_m(\theta))T| d\theta <\epsilon
\end{eqnarray*}
as $m\ge m_0$.  
Now  for any $k\in \Z^d$
\begin{eqnarray*}
|\widehat{S(B_m-D_m)T}(k)|\leq \int_{\T}|\{S(B_m(\theta)- D_m(\theta))T\}e^{-2\pi
i \langle k, \theta\rangle}|d\theta <\epsilon,
\end{eqnarray*} 
hence,  by the definition of $\lceil \quad \rfloor_0$,
\[   \lceil S B_m T\rfloor_0 =\lceil SD_m+ S(B_m-D_m)T\rfloor _0> \lceil  S D_m T\rfloor_0-\epsilon.\]
By the definition of $\lceil \quad \rfloor$ we obtain
\[   \lceil  B_m \rfloor > \lceil  D_m \rfloor-\epsilon.\]
We then get the desired conclusion. \end{proof}

\section{Local Setting}

In this section we prove  Theorem \ref{Local} which provides  a local rigidity result of the
reducibility problem in Gevrey classes.   Firstly, we describe the 
 KAM step in the case of  analytic cocycles. Next we 
approximate a Gevrey cocycle by a sequence of analytic cocycles and
apply the KAM step.  In this way we get a sequence of analytic
cocycles  tending  to  a constant. Then we use a convergence
argument to obtain Gevrey reducibility under a suitable smallness
assumption.

\subsection{The KAM step}
The KAM scheme we are using here is close to that in \cite{HY3}.  We want to conjugate a cocycle $(\alpha, Ae^{F})$ with small $F$   to a constant one. In other words, we are looking for  a constant matrix $\widetilde{A}\in U(n)$  and a 
$u(n)-$valued function $Y$ with a small norm, such that
\[
Ad(e^Y).(\alpha, Ae^F)=(\alpha, \widetilde{A})
\]
which means that
\begin{eqnarray}\label{he}
 e^{-Y(\cdot+\alpha)}Ae^{F}e^{Y}=\widetilde{A}.
\end{eqnarray}
The corresponding (affine) linearized equation reads 
\begin{eqnarray}\label{linear-eq}
  Y-A^{-1}Y(\cdot+\alpha)A= - A^{-1}F+A^{-1} \widetilde A-I. 
\end{eqnarray}
If   the inverse of  the operator
\begin{eqnarray}\label{lheo}
{\mathcal A}: \,  &C_h^\omega(\T^d, u(n))& \longrightarrow\ C_h^\omega(\T^d, u(n))\nonumber\\
&Y& \longmapsto \ Y-A^{-1}Y(\cdot+\alpha)A
\end{eqnarray}
was  bounded then the equation \eqref{he} could have been solved  by means of the implicit function theorem. The presence  of  small divisors, however,  does not allow  doing this. Indeed, expanding $Y$ in Fourier series one immediately observes that there is a lot of resonant terms which makes it impossible to find bounded solutions of \eqref{linear-eq} in general. To overcome this obstruction,  we follow the standard approach to normal forms - keep  resonant terms and  remove  non-resonant ones at each step of the iteration. To this end we divide the initial space into two spaces, one of resonant modes and another one containing only non-resonant terms where a suitable lower bound of  the operator \eqref{lheo} can be obtained. On the other hand, the space $C_h^\omega(\T^d, u(n))$ equipped with the sup-norm is not adapted for estimating the operator \eqref{lheo} below.  For this reason we fix $0<\tilde h <h$ and consider the operator \eqref{lheo} in the Banach space 
\[
\mathfrak{B}_{\tilde h}:= \left\{ X\in  C_{\tilde h}^\omega(\T^d, u(n)):\  |X|_{1,\tilde h}<\infty \right\}  
\]
equipped with the norm $|\cdot|_{1,\tilde h}$ (see Sect. \ref{Sect:analytic}). The advantage of this norm is that it gives a lower bound of \eqref{lheo} if there is a lower bound of each of the Fourier coefficients. More precisely, given 
   $\eta\in (0,1)$ and $A\in U(n)$ we suppose that there is a decomposition
\[
\mathfrak{B}_{\tilde h}=\mathfrak{B}^{(nre)}_{\tilde h}\oplus \mathfrak{B}^{(re)}_{\tilde h}
\]
on a direct sum of two closed sub-spaces $\mathfrak{B}^{(nre)}_{\tilde h}$ and $\mathfrak{B}^{(re)}_{\tilde h}$ 
(the decomposition depends on $A$ and $\eta$) in
  such a way that for any $Y\in
\mathfrak{B}_{\tilde h}^{(nre)}$ the following relations hold
\begin{equation}\label{cohomo-eqn-bound}
\mathcal{A}(Y)(\cdot) = Y-A^{-1}Y(\cdot+\alpha)A\in
\mathfrak{B}_{\tilde h}^{(nre)}\quad \mbox{and}\quad |\mathcal{A}(Y)|_{1,\tilde h}\geq \eta
|Y|_{1,\tilde h} \, . 
\end{equation} 
Let $\Pi_{nre}$ ($\Pi_{re}$) be
the standard projection from $\mathfrak{B}_{\tilde h}$ onto
$\mathfrak{B}^{(nre)}_{\tilde h}$ ($\mathfrak{B}^{(re)}_{\tilde h}$). We call
$\mathfrak{B}_{\tilde h}^{(nre)}$ ($\mathfrak{B}_{\tilde h}^{(nre)}$) the
$\eta$-nonresonant ($\eta$-resonant) subspace. With all these 
assumptions, one can solve $(\ref{he})$ partially, which is summarized in the following Lemma.

\begin{Lemma}\label{Basic-Lemma}
There is  a universal constant $\delta_*\in (0,1)$, such that 
  for any $F\in \mathfrak{B}_{\tilde h}$ satisfying
   $|F|_{1,\tilde h}\le \delta_* \eta^2$, there exist  $Y \in \mathfrak{B}_{\tilde h}^{(nre)}$ and
 $ F^{(re)} \in
\mathfrak{B}_{\tilde h}^{(re)}$  such that
\[e^{-Y(\cdot+\alpha)}Ae^{F}e^{Y}=Ae^{F^{(re)}},\]
i.e., 
\begin{eqnarray}\label{pre-post}
Ad(e^{Y}).(\alpha, Ae^{F})=(\alpha, Ae^{F^{(re)}}),
\end{eqnarray}
 with the estimates
 \[|Y|_{1,\tilde h}\leq \frac{2 }{\eta} |F|_{1,\tilde h}, \quad |F^{(re)}|_{1,\tilde h}\leq  \mbox{cst.}  |F|_{1,\tilde h} .\]
\end{Lemma}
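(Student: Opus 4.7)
The plan is to recast \eqref{pre-post} as a fixed-point equation for $Y$ alone on $\mathfrak{B}_{\tilde h}^{(nre)}$ and solve it by the Banach contraction principle, with $|F|_{1,\tilde h}\le\delta_{\ast}\eta^{2}$ supplying the required small parameter.

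Since $A^{-1}e^{-Y(\cdot+\alpha)}A=e^{-A^{-1}Y(\cdot+\alpha)A}$, equation \eqref{pre-post} is equivalent to
\[
e^{-A^{-1}Y(\cdot+\alpha)A}\, e^{F}\, e^{Y}=e^{F^{(re)}}.
\]
For $|Y|_{1,\tilde h}+|F|_{1,\tilde h}$ smaller than a universal constant, the Baker--Campbell--Hausdorff formula in the Banach algebra $\mathfrak{B}_{\tilde h}$ yields the absolutely convergent expansion
\[
F^{(re)}=\mathcal{A}(Y)+F+R(Y,F),
\]
where $\mathcal{A}$ is the operator \eqref{lheo} (collecting the two first-order contributions $Y$ and $-A^{-1}Y(\cdot+\alpha)A$) and $R(Y,F)$ gathers the iterated-commutator terms of total degree at least two. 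The Banach algebra inequality \eqref{eq:product} yields
\[
|R(Y,F)|_{1,\tilde h}\le C_{0}\bigl(|Y|_{1,\tilde h}+|F|_{1,\tilde h}\bigr)^{2},
\]
together with the corresponding quadratic Lipschitz estimate in $Y$, for some universal $C_{0}$.

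Projecting by $\Pi_{nre}$ and $\Pi_{re}$ and using $\mathcal{A}(\mathfrak{B}_{\tilde h}^{(nre)})\subseteq\mathfrak{B}_{\tilde h}^{(nre)}$ from \eqref{cohomo-eqn-bound}, the system decouples into
\[
\mathcal{A}(Y)=-\Pi_{nre}\bigl(F+R(Y,F)\bigr),\qquad F^{(re)}=\Pi_{re}\bigl(F+R(Y,F)\bigr).
\]
The second inequality in \eqref{cohomo-eqn-bound} implies that $\mathcal{A}:\mathfrak{B}_{\tilde h}^{(nre)}\to\mathfrak{B}_{\tilde h}^{(nre)}$ is invertible with operator norm of its inverse at most $\eta^{-1}$, so the first equation becomes $Y=T(Y):=-\mathcal{A}^{-1}\Pi_{nre}(F+R(Y,F))$. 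On the closed ball $\mathcal{B}=\{Y\in\mathfrak{B}_{\tilde h}^{(nre)}:\,|Y|_{1,\tilde h}\le 2\eta^{-1}|F|_{1,\tilde h}\}$, choosing $\delta_{\ast}$ small enough yields
\[
|T(Y)|_{1,\tilde h}\le\eta^{-1}|F|_{1,\tilde h}+C_{0}\eta^{-1}\bigl(3\eta^{-1}|F|_{1,\tilde h}\bigr)^{2}\le 2\eta^{-1}|F|_{1,\tilde h},
\]
and a Lipschitz constant for $T$ on $\mathcal{B}$ bounded by $\tfrac{1}{2}$. The contraction principle then provides a unique fixed point $Y\in\mathcal{B}$, and the second equation defines $F^{(re)}$ with $|F^{(re)}|_{1,\tilde h}\le C\, |F|_{1,\tilde h}$ for some universal $C>0$.

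The one structural point that requires care is that each iterate remain $u(n)$-valued, so that $e^{Y}$ is unitary and \eqref{pre-post} is an identity between maps into $U(n)$. This follows from standard antihermiticity considerations: if $Y\in u(n)$ then $(A^{-1}Y(\cdot+\alpha)A)^{\ast}=-A^{-1}Y(\cdot+\alpha)A$ since $A\in U(n)$, so $\mathcal{A}$ preserves the antihermitian subspace; each iterated commutator of antihermitian elements is antihermitian; and both $\Pi_{nre},\Pi_{re}$ act within $\mathfrak{B}_{\tilde h}\subset C_{\tilde h}^{\omega}(\T^{d},u(n))$. Hence $T$ stabilises $\mathcal{B}$ inside the antihermitian subspace, and the fixed point $Y$ is automatically $u(n)$-valued.
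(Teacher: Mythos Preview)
Your proof is correct and follows essentially the same approach as the paper: rewrite \eqref{pre-post} as a fixed-point equation for $Y\in\mathfrak{B}_{\tilde h}^{(nre)}$ via the inverse of $\mathcal{A}$ and apply the contraction principle. The paper packages the same argument slightly differently---it works with the map $\mathcal{F}(Y)=Y-\mathcal{A}^{-1}\Pi_{nre}\log\{A^{-1}e^{-Y(\cdot+\alpha)}Ae^{F}e^{Y}\}$ and establishes contraction by estimating the derivative $\mathcal{L}_{(F,Y)}-\mathcal{L}_{(0,0)}$, whereas you expand via BCH and estimate the Lipschitz constant of $T$ directly---but the two formulations are algebraically equivalent and yield the same bounds; your explicit check that the iteration preserves $u(n)$-valuedness is a point the paper leaves implicit.
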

\begin{proof} Lemma \ref{Basic-Lemma} is a counterpart of Lemma 3.1 \cite{HY3} in the discrete case.  The Lemma follows from the implicit function theorem. Given  $F$ with $|F|_{1,\tilde h}\ll 1$ we are looking for a solution  $Y\in \mathfrak{B}_{\tilde h}^{(nre)}$ of the equation 
\begin{eqnarray}\label{eq:equation-for-Y}
\mathcal{H}(F,Y):=\Pi_{nre}\log\{A^{-1}e^{-Y(\cdot+\alpha)}Ae^{F}e^{Y}\}=0
\end{eqnarray}
where 
\[
\log(X) = \sum_{r\ge 1}\frac{1}{r}(I-X)^r, \quad \mbox{for}\quad X\in M_n,\  |I-X|<1.  
\]
We are going to solve \eqref{eq:equation-for-Y} by means of a fixed point argument for  contraction maps. 
To this end, we firstly compute  the  partial  derivative of $\mathcal{H}$ with respect to $Y$.  Taking the power series expansions of $e^{Y}$ and $e^{F}$ at $Y=F=0$ we get
\begin{eqnarray}\label{basic-form}
A^{-1}e^{-Y(\cdot+\alpha)}Ae^{F}e^{Y}
&=&  I-A^{-1}Y(\cdot+\alpha)A+Y  \nonumber\\
 &+& \displaystyle A^{-1}\left\{\sum_{r\geq 2} \frac{(-1)^r}{r!}Y(\cdot+\alpha)^r \right\}Ae^{F}e^{Y}\nonumber  \\
 &+& \displaystyle  A^{-1}e^{-Y(\cdot+\alpha)}Ae^{F}\left\{\sum_{r\geq 2} \frac{1}{r!}Y^r\right\}\nonumber \\
&+& \displaystyle A^{-1}Y(\cdot+\alpha)A\left\{\sum_{r\geq 1} \frac{1}{r!} F^r \right\}
+ \left\{\sum_{r\geq 1} \frac{1}{r!} F^r \right\} Y    .
\end{eqnarray}
Using the above formula we obtain 
\begin{eqnarray}\label{dev-F}
\mathcal{L}_{(F,Y)}Z&:=&\lim_{t \to 0}\frac{1}{t}\{\mathcal{H}(F,Y+tZ)-\mathcal{H}(F,Y)\}\nonumber\\
&=&
\Pi_{nre}\{Z-A^{-1}Z(\cdot+\alpha)A+\mathcal{E}(F,Y)Z \},
\end{eqnarray}
for any     $Z\in \mathfrak{B}_{\tilde h}^{(nre)}$,  where  $\mathcal{E}=\mathcal{E}(F,Y)$ is a linear operator in $\mathfrak{B}_{\tilde h}$ depending  on $F,Y$. Moreover, applying  \eqref{eq:product} one gets  a positive constant $\mbox{cst.}$ such that
\begin{eqnarray}
|\mathcal{E}(F,Y) Z|_{1,\tilde h}\leq \mbox{cst.}(|F|_{1,\tilde h}+|Y|_{1,\tilde h})\,  |Z|_{1,\tilde h}
\end{eqnarray}
for any $F,Y\in \mathfrak{B}_{\tilde h}$ with $|Y|_{1,\tilde h}\le 1$ and $|F|_{1,\tilde h}\le 1$. 
In particular, using the definition of the space  $\mathfrak{B}_{\tilde h}^{(nre)}$ we obtain
\begin{eqnarray}\label{dev-0}
\mathcal{L}_{(0,0)}Z&=&\Pi_{nre}\{Z-A^{-1}Z(\cdot+\alpha)A\} 
=Z-A^{-1}Z(\cdot+\alpha)A
\end{eqnarray}
as well as  the estimate
\begin{eqnarray}\label{dev-0-est}
|\mathcal{L}_{(0,0)}Z|_{1,\tilde h}= |Z-A^{-1}Z(\cdot+\alpha)A|_{1,\tilde h} \geq \eta | Z |_{1,\tilde h}.
\end{eqnarray}
Using $(\ref{dev-F})$-$(\ref{dev-0-est})$ we prove that there
 exists a  constant $\delta_*\in (0,1)$ such that 
\begin{eqnarray}\label{dev-F-est}
|(\mathcal{L}_{(F,Y)}-\mathcal{L}_{(0,0)})Z|_{1,\tilde h} 
\leq  \frac{\eta}{2} |Y|_{1,\tilde h} 
\end{eqnarray}
provided that $|F|_{1,\tilde h}<\delta_* \eta^2$ and  $|Y|_{1,\tilde h}<2\delta_* \eta$. Hence, 
\begin{eqnarray}\label{dev-est-op}
\|\mathcal{L}_{(0,0)}^{-1}\|_{1,\tilde h} \leq \frac{1}{\eta } \quad \mbox{and }\quad \|\mathcal{L}_{(F,Y)}-\mathcal{L}_{(0,0)}\|_{1,\tilde h}\leq \frac{\eta}{2}
\end{eqnarray}
for $|F|_{1,\tilde h}<\delta_* \eta^2$ and  $|Y|_{1,\tilde h}<2\delta_* \eta$, 
where  $\|\cdot\|_{1,\tilde h}$ is the operator norm corresponding to the norm  $|\cdot|_{1,\tilde h}$. 

Denote by $\mathcal{W}_{\tilde h}$ the  ball
\[
\mathcal{W}_{\tilde h}\,  :=\, \{ Y\in \mathfrak{B}_{\tilde h}^{(nre)}: |Y|_{1,\tilde h}\leq 2\delta_* \eta\}
\]
which is complete with respect to the norm $|\cdot |_{1,\tilde h}$. 
For any fixed $F\in \mathfrak{B}_{\tilde h}$ satisfying $|F|_{1,\tilde h}\leq \delta_* \eta^2$, we consider  the  map
\[
\mathcal{F} (Y):=Y-\mathcal{L}_{(0,0)}^{-1}\Pi_{nre}\log\{A^{-1}e^{-Y(\cdot+\alpha)}Ae^{F}e^{Y}\} 
\]
from $ \mathfrak{B}_{\tilde h}^{(nre)}$ to itself. 
It follows from  $(\ref{dev-est-op})$ that
\[
\|Id-\mathcal{L}_{(0,0)}^{-1}\mathcal{L}_{(F,Y)}\|_{1,\tilde h}\leq \|\mathcal{L}_{(0,0)}^{-1}\|_{1,\tilde h}
\|\mathcal{L}_{(0,0)}-\mathcal{L}_{(F,Y)}\|_{1,\tilde h}\leq \frac{1}{\eta} \cdot \frac{\eta}{2}=\frac{1}{2}
\] 
as long as $Y\in \mathcal{W}_{\tilde h} $. 
On the other hand, for any  $Y_1, Y_2\in \mathcal{W}_{\tilde h} $ there is  $\xi \in [0,1]$ such that
\[
\mathcal{F} (Y_2)-\mathcal{F} (Y_1)=\left(Id-\mathcal{L}_{(0,0)}^{-1}\mathcal{L}_{(F,Y_1+\xi(Y_2-Y_1)}\right)(Y_2-Y_1).
\]
Then  for any  $Y_1, Y_2\in \mathcal{W}_{\tilde h} $ we obtain 
\begin{eqnarray}\label{eq:estimate-F}
|\mathcal{F} (Y_2)-\mathcal{F} (Y_1)|_{1,\tilde h}\leq \frac{1}{2} |Y_2- Y_1|_{1,\tilde h}. 
\end{eqnarray} 
Set    $Y_0=0$ and define $Y_j$ inductively by  $Y_j=\mathcal{F} (Y_{j-1})$ for  $j\geq 1$. Note that
\[
Y_1=\mathcal{F} (0)= - \mathcal{L}_{(0,0)}^{-1}\Pi_{nre} F
\]
which implies 
\begin{eqnarray}\label{est-Y-01}
|Y_1-Y_0|_{1,\tilde h}=|\mathcal{F} (0)|_{1,\tilde h}\leq  \frac{1}{\eta }  |F|_{1,\tilde h}\leq \delta_* \eta
\end{eqnarray} 
in view of \eqref{dev-est-op}. 
This  means that $Y_1\in \mathcal{W}_{\tilde h}$. One can prove inductively that  $Y_j$ belongs to  $\mathcal{W}_{\tilde h}$ for  each $j\geq 1$, and  that
\begin{eqnarray}
|Y_{j}-Y_{j-1}|_{1,\tilde h}\leq \frac{1}{2^{j-1}} |Y_1-Y_0|_{1,\tilde h}.
\end{eqnarray} 
In fact, if $Y_1,\cdots, Y_{j-1}$ are all in $\mathcal{W}_{\tilde h}$ and for all $s\in\{1,\cdots, j-1\}$
\begin{eqnarray}
|Y_{s}-Y_{s-1}|_{1,\tilde h}\leq \frac{1}{2^{s-1}} |Y_1-Y_0|_{1,\tilde h},\nonumber
\end{eqnarray} 
then  (\ref{eq:estimate-F}) implies 
\begin{eqnarray}
|Y_{j}-Y_{j-1}|_{1,\tilde h}= |\mathcal{F}(Y_{j-1})-\mathcal{F}(Y_{j-1})|\leq \frac{1}{2}|Y_{j-1}-Y_{j-2}|_{1,\tilde h}\leq \frac{1}{2^{j-1}} |Y_1-Y_0|_{1,\tilde h},\nonumber
\end{eqnarray} 
and using \eqref{est-Y-01} one obtains 
\[
\begin{array}{rcll}
|Y_{j}|_{1,\tilde h}&=&|(Y_{j}-Y_{j-1})+(Y_{j-1}-Y_{j-2})+\cdots+(Y_{1}-Y_{0})|_{1,\tilde h}   \\[0.3cm]
&\leq& \displaystyle 2 |Y_1-Y_0|_{1,\tilde h}    
 \leq \frac{2}{\eta}|F|_{1,\tilde h}\leq 2\delta_* \eta.
\end{array} 
\]
Since $\mathfrak{B}_{\tilde h}^{(nre)}$ is  closed, the limit  $Y= \lim_{j\to \infty}Y_j$ exists in $\mathfrak{B}_{\tilde h}^{(nre)}$. Moreover, 
\begin{eqnarray}\label{est-Y}
|Y|_{1,\tilde h}\leq  \frac{2}{\eta}|F|_{1,\tilde h}\leq 2\delta_* \eta, 
\end{eqnarray}
which means that  $Y\in \mathcal{W}_{\tilde h}$. In particular, $Y$ satisfies the equation 
  $\mathcal{F}(Y)=Y$ and in fact $Y$  is unique in $\mathcal{W}_{\tilde h}$ in view of \eqref{eq:estimate-F}. Hence, 
\begin{eqnarray}
\Pi_{nre}\log\{A^{-1}e^{-Y(\cdot+\alpha)}Ae^{F}e^{Y}\}=0 \nonumber
\end{eqnarray} 
and setting 
\begin{eqnarray}\label{def-F-re}
F^{(re)}:=\Pi_{re} \log \{A^{-1}e^{-Y(\cdot+\alpha)}Ae^{F}e^{Y}\}
\end{eqnarray}
we  obtain $(\ref{pre-post})$. It remains to estimate $F^{(re)}$.  It follows from  (\ref{basic-form}), (\ref{est-Y}) , and from the implication 
\[Y\in \mathfrak{B}_{\tilde h}^{(nre)}\Rightarrow \mathcal{A}(Y)=Y-A^{-1}Y(\cdot+\alpha)A\in  \mathfrak{B}_{\tilde h}^{(nre)},\]
that
\begin{eqnarray}
|F^{(re)}|_{1,\tilde h}\leq \mbox{cst.}  (|Y|_{1,\tilde h}^2+ |F|_{1,\tilde h})
\end{eqnarray}
Then using $ (\ref{est-Y})$ and the assumption $|F|_{1,\tilde h}\leq \delta_* \eta^2$, we obtain 
\begin{eqnarray}
|F^{(re)}|_{1,\tilde h}\leq \mbox{cst.} \left(\frac{4}{\eta^2}|F|_{1,\tilde h}^2+|F|_{1,\tilde h}\right)\\
\leq \mbox{cst.} \left(\frac{4}{\eta^2}(\delta_* \eta^2)|F|_{1,\tilde h}+|F|_{1,\tilde h}\right) \leq \mbox{cst.}  |F|_{1,\tilde h}.\nonumber
\end{eqnarray}
This completes the proof of the Lemma. \end{proof}

The previous  Lemma will be used in the K.A.M. step. Before formulating it we recall the notion of resonant (non-resonant) pairs. A pair  $(\lambda, \widetilde{\lambda})$ of complex numbers  is said to be  $(N,\delta)$-non-resonant (with respect to $\alpha$) if 
\begin{eqnarray}\label{eq:N-delta-nonresonant}
\forall\,  k\in
\Z^d,\quad 0\neq |k|\leq N\, : \quad |e^{ i\langle k,\alpha\rangle }\widetilde{\lambda}-\lambda|\geq \delta,
\end{eqnarray}
otherwise it is said to be $(N,\delta)$-resonant. Given  $A\in U(n)$,
we write $A\in {\rm NR \, }(N,\delta)$  if  any  pair of eigenvalues of  $A$ is $(N,\delta)$ non-resonant, otherwise we write $A\in {\rm RS \, }(N,\delta)$. 
We  fix  a small constant $0<\sigma= \sigma(\ell) < 1$ by 
\begin{equation}
\sigma=\min(1/100, (\ell-1)(5\ell)^{-1})
\label{eq:sigma}
\end{equation}
where $\ell>1$ appears in \eqref{eq:small-condition-epsilon}. 

\begin{Proposition}\label{KAM}
Let $\alpha\in {\rm DC\, }(\gamma,\tau)$. For any given $\kappa\in (0,1)$, there exist   
\[
\delta_0 = \delta_0(\kappa, \gamma,\tau,d)\in(0, 1), \quad \chi=\chi(\kappa, \gamma,\tau,d)\geq 1 \quad \mbox{and} \quad K_*=K_*(\kappa)\geq 1
\] 
such that the following holds.
\begin{enumerate}
	\item[{\em (i)}]
For any $h,\varepsilon\in ( 0,1)$, $A\in U(n)$, $N\geq 1$ and $F\in
C_h^\omega(\T^d,U(n))$ satisfying
\[
0<\varepsilon< \delta_0  h^\chi , \quad N=
\left(\frac{2n+1}{\kappa}\right)^n \left(\frac{2}{ h} \log \frac{1}{\varepsilon}+1\right),\quad
{\rm and} \quad |F|_h\leq \varepsilon
\]
there is  $R\in C_{(1-\kappa)h}^\omega(\T^d,U(n))$, $A_+\in U(n)$
and $F_+\in C_{(1-\kappa)h}^\omega(\T^d,U(n))$ such that
\[
{\rm Ad\, }(R).(\alpha, Ae^{F})=(\alpha, A_+e^{F_+})
\]
where
\[
|F_{+}|_{(1-\kappa)h}\leq \varepsilon_+ :=\varepsilon^{1+\sigma}, \quad
|R|_{(1-\kappa)h}\leq \varepsilon^{-K_*},\quad \mbox{and} \quad R\in
\Pi( N, 1/n, \varepsilon^{1-4\sigma}).
\]
\item[{\em (ii)}] If $A\in {\rm NR \, }(N,
 \varepsilon^\sigma)$, then   $R=e^{Y}$  with $Y\in
 C_{h}^\omega(\T^d,U(n))$ and 
 \[
 |Y|_{h}<\varepsilon^{1-2\sigma}, \quad |A_+-A|\leq \varepsilon^{1/2}.
 \]
 \end{enumerate}
\end{Proposition}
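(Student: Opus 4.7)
The plan is a single KAM-type step combining a resonance-clearing normal-form transformation with Lemma~\ref{Basic-Lemma}, together with careful bookkeeping to obtain the $\Pi$-membership of the conjugacy. First I would diagonalize $A$: since $A\in U(n)$, write $A=U\Lambda U^*$ with $\Lambda=\mathrm{diag}(\lambda_1,\ldots,\lambda_n)$, and conjugate by the constant $U$ to reduce to $A=\Lambda$. In this eigenbasis the linear operator $\mathcal{A}(Y)=Y-\Lambda^{-1}Y(\cdot+\alpha)\Lambda$ acts diagonally on Fourier coefficients, multiplying $\widehat Y_{p,q}(k)$ by $1-\bar{\lambda_p}\lambda_q e^{2\pi i\langle k,\alpha\rangle}$, whose modulus equals $|\lambda_p-\lambda_q e^{2\pi i\langle k,\alpha\rangle}|$. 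Declare a mode $(p,q,k)$ with $|k|\le N$ resonant if this quantity is below $\varepsilon^\sigma$, and define $\mathfrak{B}^{(re)}_{\tilde h}$, $\mathfrak{B}^{(nre)}_{\tilde h}$ accordingly on the strip $\tilde h=(1-\kappa)h$. The Diophantine condition forces any two resonant $k$'s for the same $(p,q)$ to be separated by at least $c(\gamma\varepsilon^\sigma)^{-1/\tau}$, so the resonant set is sparse.

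The preparatory normal-form step is to choose integer vectors $k^{(1)},\ldots,k^{(n)}\in\Z^d$ with $\max_j|k^{(j)}|\le\tilde N\le N$ such that, setting $W(\theta)=\mathrm{diag}(e^{2\pi i\langle k^{(j)},\theta\rangle})$, the shifted diagonal matrix
\[
\Lambda':=W(\theta+\alpha)^{-1}\Lambda W(\theta)=\mathrm{diag}\bigl(\lambda_j e^{-2\pi i\langle k^{(j)},\alpha\rangle}\bigr)
\]
belongs to $\mathrm{NR\,}(N,c\varepsilon^\sigma)$. This is a combinatorial pigeonhole construction exploiting the sparseness of resonances; the prescribed factor $((2n+1)/\kappa)^n$ in $N$ guarantees enough room in a box of radius $\tilde N$ to defeat all $n(n-1)$ pair resonances simultaneously. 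Then I would truncate the conjugated exponent $F'(\theta)=W(\theta+\alpha)^{-1}F(\theta)W(\theta+\alpha)$ at level $N$: by \eqref{rem-fou-est} the tail $R_NF'$ is $\le\varepsilon^{1+\sigma}$ on $(1-\kappa)h$, and $|T_NF'|_{1,\tilde h}\le c\varepsilon e^{4\pi\tilde Nh}$ via \eqref{eq:norm-relation}. Apply Lemma~\ref{Basic-Lemma} to $(\alpha,\Lambda'e^{T_NF'})$ on the strip $\tilde h$ with $\eta=c\varepsilon^\sigma$; the hypothesis $|T_NF'|_{1,\tilde h}\le\delta_*\eta^2$ is secured by $\varepsilon\le\delta_0 h^\chi$ once $\chi$ is chosen large enough to absorb the powers of $h^{-1}$ and the exponential $e^{4\pi\tilde Nh}$. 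This yields $Y\in\mathfrak{B}^{(nre)}_{\tilde h}$ with $|Y|_{1,\tilde h}\le c\varepsilon^{1-\sigma}$ and a resonant remainder $F^{(re)}$ of size $O(\varepsilon)$. By the $(N,c\varepsilon^\sigma)$-non-resonance of $\Lambda'$ only the zero-frequency mode of $F^{(re)}$ survives in $\mathfrak{B}^{(re)}_{\tilde h}$; I absorb it into the new constant matrix $A_+:=\Lambda'\exp(\widehat{F^{(re)}}(0))\in U(n)$, the oscillatory remainder together with the truncation tail forming $F_+$ with $|F_+|_{(1-\kappa)h}\le\varepsilon^{1+\sigma}$.

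Setting $R:=U\cdot W\cdot e^Y$, the bound $|R|_{(1-\kappa)h}\le e^{2\pi\tilde Nh}\le\varepsilon^{-K_*}$ follows from the choice of $N$, with $K_*$ depending only on $\kappa$. The $\Pi$-property is a direct application of Lemma~\ref{Lemma:product}: $U\in\Pi(0,1,0)$ by Lemma~\ref{Lemma:Pi}(1), $W\in\Pi(\tilde N,1/n,0)$ by Lemma~\ref{Lemma:Pi}(3), and $e^Y\in\Pi(0,1,c\varepsilon^{1-2\sigma})$ by Lemma~\ref{Lemma:Pi}(4), hence $R\in\Pi(\tilde N,1/n,c\varepsilon^{1-2\sigma})\subset\Pi(N,1/n,\varepsilon^{1-4\sigma})$. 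Part~(ii) is an immediate specialization: when $A\in\mathrm{NR\,}(N,\varepsilon^\sigma)$ no shift is required, so $W=I$, $R=Ue^Y$ with $|Y|_h<\varepsilon^{1-2\sigma}$ directly from the Basic Lemma, and $|A_+-A|\le|\widehat{F^{(re)}}(0)|+O(\varepsilon^2)\le\varepsilon^{1/2}$. The main obstacle I expect is the combinatorial construction of the shifts $k^{(j)}$: one must produce $n$ such vectors with $|k^{(j)}|\le\tilde N$ simultaneously defeating all pair resonances, while keeping the exponential loss $e^{2\pi\tilde Nh}$ compatible with the smallness hypothesis $\varepsilon\le\delta_0 h^\chi$ and ensuring $\tilde N$ is small enough to fit inside the prescribed $N$.
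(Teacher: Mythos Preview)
Your overall architecture is right---diagonalize, identify resonant modes, use Lemma~\ref{Basic-Lemma} with $\eta=\varepsilon^\sigma$, then absorb the resonant remainder into $A_+$ via a diagonal phase shift $W$---but the order in which you apply $W$ and the Basic Lemma is fatal as written. The shifts $k^{(j)}$ needed to clear resonances are necessarily of size $\tilde N$ with $\tilde N h$ comparable to $\log(1/\varepsilon)$: even in the most favourable resonance configuration the paper's construction gives $|k^{(p)}|\le nN_0$ with $N_0 h\ge 2\log(1/\varepsilon)$. Hence $e^{4\pi\tilde N h}$ is a genuine power of $1/\varepsilon$, not a power of $1/h$, so the blown-up perturbation satisfies $|F'|_{1,\tilde h}\gtrsim \varepsilon^{1-c}$ for some $c>1$ and the hypothesis $|T_NF'|_{1,\tilde h}\le\delta_*\varepsilon^{2\sigma}$ of Lemma~\ref{Basic-Lemma} \emph{cannot} be met. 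No choice of $\chi$ in $\varepsilon\le\delta_0 h^\chi$ helps, because the bad factor depends on $\varepsilon$ itself, not on $h$. (There is a second difficulty: with shifts bounded by $\tilde N\le N$ you cannot in general force $\Lambda'\in\mathrm{NR}(N,c\varepsilon^\sigma)$; cross-block pairs are only known to be nonresonant up to the intermediate scale $N_{j+1}$ from Lemma~\ref{Removing Resonances}, and after shifting the relevant range $|l+k^{(p)}-k^{(q)}|$ may exceed $N_{j+1}$.)

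The paper's remedy is to \emph{reverse the order}. One first applies Lemma~\ref{Basic-Lemma} to the original small $F$ (so no blow-up), but with a more refined decomposition: the nonresonant space consists only of frequencies $|k|\le N_{j+1}$ with $(p,q,k)$ avoiding the specific resonant tags $k=k^{(p)}-k^{(q)}$, while the resonant space carries those tagged low-frequency modes \emph{together with the entire tail} $|k|>N_{j+1}$. The output $\widetilde F\in\mathfrak{B}^{(re)}_{\tilde h}$ then has the property that after conjugating by $Q$ its low-frequency part becomes a \emph{constant} matrix (absorbed into $A_+$), and only the high-frequency tail is multiplied by $|Q|^2\lesssim\varepsilon^{-K_*}$; but that tail already decays like $e^{-\kappa N_{j+1}h/2}$, and the choice $N_{j+1}=\frac{2n+1}{\kappa}N_j$ is exactly what makes $e^{2nN_jh}\cdot e^{-\kappa N_{j+1}h}\le e^{-N_jh}\le\varepsilon^2$. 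Your $\Pi$-bookkeeping and the treatment of case~(ii) are correct once this ordering is fixed.
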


\begin{proof} The proof of the proposition is long and we divide it in several steps.\\

\noindent
{\em Step 1. Choosing the constants.}\\

    There exists  $S\in U(n)$ such that
$SAS^*={\rm diag\, }(\lambda_1,\cdots,\lambda_n)$. Hence, without loss of generality,  we may assume that $A$   is diagonal, $A={\rm diag\, }(\lambda_1,\cdots,\lambda_n)$. 
Set 
\begin{equation}
N_j = \left(\frac{\kappa}{2n+1}\right)^{n-j} N,\quad j=0,1,\ldots,n. 
\label{eq:N}
\end{equation} 
Choosing properly the constants $\delta_0=\delta_0(\kappa)>0$  and $\chi>1$ and taking  $\varepsilon\in (0, \delta_0  h^\chi)$ we can assume that  $\varepsilon^{\sigma/2}$ is smaller than any of the finitely many universal constants arising below, and that the following estimates hold
\begin{eqnarray}\label{rem-est1}
\displaystyle e^{-N_n h}\leq \cdots \leq e^{-N_1 h} \leq e^{-N_0 h} \\[0.3cm]
\displaystyle
=\ \exp\left\{-h\left(\frac{2}{ h} \log \frac{1}{\varepsilon}+1\right)\right\}\leq  \varepsilon^2, \nonumber
\end{eqnarray}
\begin{eqnarray}\label{rem-est2}
\displaystyle \frac{N^d}{(\kappa h)^d}\ =\ \frac{1}{(\kappa h)^d}\left(\frac{2n+1}{\kappa}\right)^{nd}\left(\frac{2}{ h} 
\log \frac{1}{\varepsilon}+1\right)^d\\[0.3cm]
\displaystyle\leq \frac{3^d (2n+1)^{nd}}{\kappa^{(n+1)d}h^{2d}} \left(\log \frac{1}{\varepsilon}\right)^d
\leq \left(\frac{1}{\varepsilon}\right)^{\sigma}, \nonumber
\end{eqnarray}
\begin{eqnarray}\label{rem-est3}
&&\displaystyle 2\gamma (4n)^{\tau+1} N^{\tau} \\[0.3cm]
\displaystyle &&=\ 2\gamma (4n)^{\tau+1} \left(\frac{2n+1}{\kappa}\right)^{\tau n} \left(\frac{2}{ h} \log \frac{1}{\varepsilon}+1\right)^{\tau}
< \ \left(\frac{1}{\varepsilon}\right)^{\sigma}. \nonumber
\end{eqnarray}
For technical reasons we  assume as well that   
the inequality
\[
|X|, |Y|\le\varepsilon^{1/2}, \ X, Y\in M_n
\]
implies 
\begin{eqnarray}\label{tech-est}
\left|e^{-X}e^{X+Y} -I\right| <1, \quad  |\log\{e^{-X}e^{X+Y}\} |\le 2 (|X|^2+|Y|).
\end{eqnarray}

\noindent
{\em Step 2. The operator $\mathcal{A}$.} \\

Expanding $Y$ in  Fourier series we write the operator $\mathcal A$ in  $(\ref{lheo})$ as follows
\begin{eqnarray}\label{k-lheo}
\sum_{k\in\Z^d}\widehat{Y}(k) e^{2\pi i \langle k, \theta \rangle}\ \stackrel{\mathcal{A}}\longmapsto \ \sum_{k\in\Z^d}\{\widehat{Y}(k)-A^{-1}e^{2\pi i \langle k, \alpha \rangle}\widehat{Y}(k)A\}e^{2\pi i \langle k, \theta \rangle}.
\end{eqnarray}
Denote by $E(p,q)$  the elementary matrix with entries  $E(p,q)_{s,t}=1$ if $(s,t)=(p,q)$  and $0$ otherwise. Then $(\ref{k-lheo})$  becomes
\begin{eqnarray}\label{pq-k-lheo}
&&\sum_{k\in\Z^d}e^{2\pi i \langle k, \theta \rangle} \left\{ \sum_{1\leq p,q \leq n} \widehat{y}_{p,q}(k) E(p,q)\right\} \nonumber\\
&\stackrel{\mathcal{A}}\longmapsto &\sum_{k\in\Z^d}e^{2\pi i \langle k, \theta \rangle}\left\{  \sum_{1\leq p,q \leq n} \overline{\lambda_p}(\lambda_p-\lambda_q e^{2\pi i \langle k, \alpha \rangle})\widehat{y}_{p,q}(k)E(p,q)\right\}
\end{eqnarray}
where $\widehat{y}_{p,q}(k)$ denotes the $(p,q)$ entry of $\widehat{Y}(k)$. In this way the corresponding homological equation \eqref{linear-eq} splits into a system of equations 
\[
\overline{\lambda_p}(\lambda_p-\lambda_q e^{2\pi i \langle k, \alpha \rangle})\widehat{y}_{p,q}(k) = f_{p,q},\quad |k|\le N,\ 1\le p,q \le n. 
\]
We would like to solve it and to get ``good'' estimates for the solutions,  or equivalently to invert the operator $\mathcal A$ in a suitable space. To do this 
we have  to deal with the divisor $|\lambda_p-e^{2\pi i\langle k, \alpha \rangle}\lambda_q|$  which could be arbitrary small if  the pair $(\lambda_p, \lambda_q)$  is  $(N, \varepsilon^{\sigma})$-resonant and only $\varepsilon^{\sigma}$-small if it is $(N, \varepsilon^{\sigma})$-non-resonant (note that all $\lambda_1,\cdots,\lambda_n$ are on the unit circle of $\C$). When $p=q$, the divisor takes the form $|1-e^{2\pi i\langle k, \alpha \rangle}|$ since  $|\lambda_p|=1$, which is only $\varepsilon^{\sigma}$-small  for $|k|\le N$. Indeed \eqref{eq:sdc} and (\ref{rem-est3} ) imply that it is larger  than $2(4n)^{\tau+1}\varepsilon^{\sigma}$.  To deal with the case $p\neq q$, we recall a simple fact known as " the uniqueness of the $(2n N, 2n \varepsilon^{\sigma})-$resonance" which says that for any $\lambda_p$ and $\lambda_q$ the following relation holds
\begin{eqnarray}\label{uniq-res}
\left\{\begin{array}{ll}|\lambda_p-e^{2\pi i\langle k, \alpha \rangle}\lambda_q|<2n \varepsilon^{\sigma}\\
  |\lambda_p-e^{2\pi i\langle l, \alpha \rangle}\lambda_q|<2n \varepsilon^{\sigma}\\
 |k|,|l|\leq 2n N 
\end{array} \right\}\quad \Rightarrow  \quad  k=l.
\end{eqnarray} 
In fact, taking into account \eqref{eq:sdc}, the violation of \eqref{uniq-res} would imply  
\begin{eqnarray*}
4n \varepsilon^{\sigma}\geq |e^{2\pi i\langle k, \alpha \rangle}-e^{2\pi i\langle l, \alpha \rangle}|\geq \frac{ \gamma^{-1}}{|k-l|^{\tau}}\geq\frac{1}{\gamma(4nN)^{\tau}},
\end{eqnarray*}
 which contradicts   $(\ref{rem-est3})$.  \\
 
\noindent
{\em Step 3. Structure of the resonances.}  \\

 We are going to describe  the structure of the spectrum of $A$ dividing it into blocks   of resonant pairs of eigenvalues. 

\begin{Lemma}\label{Removing Resonances}
There exist $0\leq j\leq n-1$ and $1\leq m \leq n$, such that  ${\rm Spec\, }(A)=\{\lambda_1,\cdots,\lambda_n\}$ can be divided into $m$ subsets $\Lambda_1,\cdots,\Lambda_m$, with the properties\\
a)  If $\lambda_p$ and $\lambda_q$ belong to one and the same $\Lambda_r$  then they are $(nN_j, n\varepsilon^{\sigma})-$resonant;\\
b)  If $\lambda_p,\lambda_q$ belong to  different subsets  then they are 
$(N_{j+1}, \varepsilon^{\sigma})-$nonresonant.
\end{Lemma}

\begin{proof}
We will say that  $\lambda_p, \lambda_q$  are  $(L,a)-$connected if there exists a $(L,a)$-{\it resonant path of length $r$}
\[\lambda_{p_0}, \lambda_{p_1},\cdots,\lambda_{p_{r-1}}, \lambda_{p_r}\in \{\lambda_{1},\lambda_{2},\cdots, \lambda_{n}\},\quad \mbox{with}\quad p_0=p,\ p_r=q,  \]
such that $(\lambda_{p_0},\lambda_{p_1})$ , $(\lambda_{p_1},\lambda_{p_2})$ ,$\cdots$,  $(\lambda_{p_{r-1}},\lambda_{p_r})$  are  all $(L,a)-$resonant. One can easily check  that such a   $(L,a)-$connected pair $(\lambda_p,\lambda_q)$   is $(rL, ra)-$resonant.

Note that any $\lambda_p,\lambda_q$ in a  $(N_j, \varepsilon^{\sigma})-$connected component are $(nN_j, n\varepsilon^{\sigma})-$re-sonant. Indeed, suppose that the pair $\lambda_p, \lambda_q$ can be connected by  a $(N_j,\varepsilon^{\sigma})-$resonant path of length $r$. Without loss of generality, eliminating the "closed loops", we can assume that $r \leq n$,  hence, the pair is   $(nN_j, n\varepsilon^{\sigma})-$resonant as well.

To prove the assertion, let us firstly divide $\{\lambda_1,\cdots,\lambda_n\}$  into  $(N_0,\varepsilon^{\sigma})-$connected components. If any 
$\lambda_p$ and $\lambda_q$ belonging  to two different $(N_0,\varepsilon^{\sigma})-$connected components are  
$(N_{1}, \varepsilon^{\sigma})-$nonresonant, we finish the proof  choosing  $\Lambda_1,\cdots,\Lambda_m$ to be the $(N_0,\varepsilon^{\sigma})-$connected components.  Otherwise, we consider the $(N_1,\varepsilon^{\sigma})-$connected components and repeat the procedure. More precisely, if there is  $j$ such that any  
$\lambda_p$ and $\lambda_q$  belonging to  different   $(N_j,\varepsilon^{\sigma})$- connected components  are   $(N_{j+1}, \varepsilon^{\sigma})$-nonresonant, we  denote by $\Lambda_1,\cdots,\Lambda_m$ the corresponding $(N_j,\varepsilon^{\sigma})-$connected components and finish the proof. Otherwise, we consider the $(N_{j+1}, \varepsilon^{\sigma})$-connected components. 
Thus there are two possibilities: a) either we stop at the $j^{th}$ step and set $m=j$; b) or the number of the  $(N_{j+1},\varepsilon^{\sigma})$-connected components is strictly less than the number of the  $(N_{j},\varepsilon^{\sigma})$-connected components. In the latter case,  there is  $j\leq n-1$ such that at $j^{th}-$step there is only one $(N_{j}, \varepsilon^{\sigma})$-connected component $\{\lambda_1,\cdots,\lambda_n\}$ and we take $m=1$ and $\Lambda_1=\{\lambda_1,\cdots,\lambda_n\}$. \end{proof}

From now on we fix  $j\le n-1$ as in Lemma \ref{Removing Resonances}.  Taking account of the structure of the resonances we are going to define the spaces $\mathfrak{B}^{(nre)}_h$ and  $\mathfrak{B}^{(re)}_h$ and verify the hypothesis of Lemma \ref{Basic-Lemma}.  To this end we assign to any $p\in \{1,\cdots, n\}$ an integer vector $k^{(p)}\in \Z^d$ as follows. First  for any $1\le t\le m$ we 
 choose a representative $\lambda_{p_t}\in \Lambda_t$ and set $k^{(p_t)}=0$. Let $p\in \{1,\cdots, n\}$ and $p\notin \{p_1,\cdots, p_m\}$.  
There exits $t\in\{1,\cdots,m\}$ such that $\lambda_p\in \Lambda_t$ and by  Lemma \ref{Removing Resonances}, {\em a)},  and (\ref{uniq-res}) there is a {\em unique}  $k^{(p)}\in \Z^d$ such  that 
\[
|k^{(p)}|\leq n N_j  \quad \mbox{and} \quad |\lambda_{p}- \lambda_{p_t}e^{2\pi i\langle k^{(p)}, \alpha \rangle }|<n\varepsilon^{\sigma}.
\]  
Let $\lambda_p$ and $\lambda_q$ belong to one and the same component $\Lambda_t$. Then
\[ 
\max\left\{|\lambda_{p}- \lambda_{p_t} e^{2\pi i\langle k^{(p)}, \alpha \rangle }|,\,|\lambda_{q}- 
\lambda_{p_t} e^{2\pi i\langle k^{(q)}, \alpha \rangle }|\right\} < n\varepsilon^{\sigma}
\]
which implies 
\[
|\lambda_{p}- \lambda_{q}e^{2 \pi i\langle k^{(p)}-k^{(q)}, \omega \rangle }|<2n\varepsilon^{\sigma}  .
\]  
By the uniqueness (\ref{uniq-res}) of the $(2n N, 2n \varepsilon^{\sigma})-$resonance, the integer vector $k^{(p)}-k^{(q)}$ can be characterized as the unique  $k\in \Z^d$ satisfying the inequalities
\begin{equation}\label{eq:k}
|k|\leq 2n N \quad \mbox{and} \quad |\lambda_p- \lambda_q e^{2\pi i\langle k, \alpha \rangle }|<2 n\varepsilon^{\sigma}.
\end{equation}
This implies  the relation 
\begin{equation}\label{eq:non-resonant-k}
\lambda_p, \lambda_q\in \Lambda_t,\
k\neq k^{(p)}-k^{(q)},\  |k|\leq 2n N \quad \Longrightarrow \quad 
|\lambda_p- \lambda_q e^{2\pi i\langle k, \alpha \rangle }|\geq 2 n\varepsilon^{\sigma}.
\end{equation}
The existence of $k\in \Z^d$ satisfying \eqref{eq:k} follows from   Lemma $\ref{Removing Resonances}$, {\em a)}. Indeed,    the eigenvalues  $\lambda_p, \lambda_q\in \Lambda_t$ are $(n N_j, n \varepsilon^{\sigma})$-resonant, which means there is $k^{(p,q)}\in \Z^d$ satisfying 
\[
|k^{(p,q)}|\leq n N_j\leq n N\quad \mbox{and} \quad |\lambda_p- \lambda_q e^{2\pi i\langle k, \alpha \rangle }|<n\varepsilon^{\sigma}
\]
and by $(\ref{eq:non-resonant-k})$ we get $k^{(p,q)}=k^{(p)}-k^{(q)}$.  In particular,  $|k^{(p)}-k^{(q)}|=|k^{(p,q)}|\leq n N_j$, and we obtain the relation 
\begin{equation}\label{eq:non-resonant-k-plus-1}
\lambda_p, \lambda_q\in \Lambda_t,\
N_j<|k|\leq 2n N \quad \Longrightarrow \quad 
|\lambda_p- \lambda_q e^{2\pi i\langle k, \alpha \rangle }|\geq 2 n\varepsilon^{\sigma}.
\end{equation}
Moreover,  $(\ref{eq:non-resonant-k-plus-1})$ and  Lemma $\ref{Removing Resonances}$, {\em b)} imply
\begin{equation}\label{eq:non-resonant-k-plus-2}
 \lambda_p, \lambda_q\in  \mbox{Spec}\, (A),  \quad 
nN_j< |k|\leq N_{j+1} \quad \Longrightarrow \quad 
|\lambda_p- \lambda_q e^{2\pi i\langle k, \alpha \rangle }|\geq \varepsilon^{\sigma}.
\end{equation}

\noindent
{\em Step 4. The spaces $\mathfrak{B}^{(nre)}_{\tilde h}$ and $\mathfrak{B}^{(re)}_{\tilde h}$.} \\

Let us go back to  the expansion $(\ref{pq-k-lheo})$.  Denote by $\mathcal{Z} _k$,  $k\in \Z^d$,  the following subset of  $\{1,\cdots,n\}^2 $
\begin{equation}\label{def-Z-k}
\mathcal{Z} _k:=\{(p,q): \ \lambda_p,\lambda_q\in \Lambda_t \hbox{\quad for some $1\leq t \leq m$ and $k=k^{(p)}-k^{(q)}$} \}
\end{equation}
and by $\mathcal{Z} _k^c:= \{1,\cdots,n\}^2 - \mathcal{Z} _k$ the complement of $\mathcal{Z} _k$. 
 Note  that  
\[
(p,q)\in \mathcal{Z}_k\Leftrightarrow (q,p)\in \mathcal{Z}_{-k} \quad \mbox{and}  \quad   (p,q)\in \mathcal{Z}_{0}\Rightarrow p=q. 
\]
Recall that  for any $(p,q)$ there is at most one $k$  such that $(p,q)\in \mathcal{Z}_k$, which implies that there is no more than $n^2$ non-empty $\mathcal{Z}_k$.

Set $\tilde h = (1-\kappa/2)h$ and consider the space $\mathfrak{B}_{\tilde h}$ consisting of all $X\in C^{\omega}_{\tilde h}(\T^d,u(n))$ of finite norm $|X|_{1,\tilde h}<\infty$.  We define  $\mathfrak{B}^{(re)}_{\tilde h}$ as the space of all $X\in \mathfrak{B}_{\tilde h}$ such that 
\begin{equation}\label{re-form}
\left(T_{N_{j+1}}X\right)(\theta)\ = \ \sum_{|k|\leq n N_j}e^{2 \pi  i\langle k,\theta\rangle}\left\{\sum_{(p,q)\in \mathcal{Z}_k} \widehat{x}_{p,q}(k)  E(p,q)\right\}
\end{equation}
 and denote by $\mathfrak{B}^{(nre)}_{\tilde h}$  the space of all $X\in \mathfrak{B}_{\tilde h}$ of the form
\begin{eqnarray}\label{nre-form}
X(\theta)=\sum_{ |k|\leq N_{j+1}}e^{2 \pi  i\langle k,\theta\rangle}\{\sum_{(p,q)\in \mathcal{Z}_k^c } \widehat{x}_{p,q}(k)   E(p,q)\}\\[0.3cm]
=\sum_{ |k|\leq n N_{j}}e^{2 \pi  i\langle k,\theta\rangle} \sum_{(p,q)\in \mathcal{Z}_k^c } \widehat{x}_{p,q}(k)   E(p,q)
+ \sum_{  n N_{j}<|k|\leq N_{j+1}} \widehat{X}(k) e^{2 \pi  i\langle k,\theta\rangle}\nonumber
\end{eqnarray}
(recall that the truncation $T_{N}F$ of the Fourier series of $F$ is defined in Sect. 2.1 and that $\widehat{f}_{p,q}(k)$ is the $(p,q)$ entry of $\widehat{F}(k)$).
Both spaces are closed in $\mathfrak{B}_{\tilde h}$  and obviously
\[
\mathfrak{B}^{(re)}_{\tilde h}=\mathfrak{B}^{(nre)}_{\tilde h}\oplus \mathfrak{B}^{(re)}_{\tilde h} .
\]
Moreover, the following assertion holds true. 
\begin{Lemma}\label{Hypothesis-Basic-Lemma} Let $X\in \mathfrak{B}^{(nre)}_{\tilde h} $. Then 
 \[
X-A^{-1}X(\cdot+\alpha)A\in \mathfrak{B}^{(nre)}_{1,\tilde h} \quad  \mbox{and} 
\quad |X-A^{-1}X(\cdot+\alpha)A|_{1,\tilde h} \ge \varepsilon^{\sigma}|X|_{1,\tilde h}.  
\] 
\end{Lemma}
\begin{proof} The first relation is evident. To estimate below the first sum in \eqref{nre-form} we use \eqref{eq:non-resonant-k} and for the second sum in  \eqref{nre-form}  we make use of \eqref{eq:non-resonant-k-plus-2}. 
\end{proof}

\noindent
{\em Step 5. Applying Lemma \ref{Basic-Lemma}}. \\

The previous lemma says that \eqref{cohomo-eqn-bound} holds with   
$\eta=\varepsilon^{\sigma}$. By assumption and by \eqref{eq:norm-relation} and  \eqref{rem-est2} we get as well  
\[
|F|_{1,\tilde h}\le c_\ast(\kappa h)^{-d}|F|_{h}\le c_\ast(\kappa h)^{-d}\varepsilon < \varepsilon^{1-3\sigma/2}
\]
 and applying  Lemma \ref{Basic-Lemma} we find  $Y\in \mathfrak{B}^{(nre)}_{\tilde h}$ and $\widetilde{F}\in \mathfrak{B}^{(re)}_{\tilde h}$  such that
\begin{equation}\label{yf-est}
\left\{
\begin{array}{lcrr}
|Y|_{\tilde h}\leq |Y|_{1,\tilde h}\leq 2\varepsilon^{-\sigma}|F|_{1,\tilde h}\le 2c_\ast(\kappa h)^{-d}\varepsilon^{1-\sigma} \le \varepsilon^{1-2\sigma}, \\ [0.3cm]
|\widetilde{F}|_{\tilde h} \leq |\widetilde{F}|_{1,\tilde h} \leq cst. |F|_{1,\tilde h} \leq \varepsilon^{1-2\sigma}
\end{array}
\right.
\end{equation}
and 
\[
{\rm Ad\, }(e^{Y}).(\alpha, Ae^{F})=(\alpha, Ae^{\widetilde{F}}).
\]
Setting  $Q(\theta)={\rm diag\, }(e^{2\pi i\langle k^{(1)},\theta\rangle},\cdots,e^{2\pi i\langle k^{(n)},\theta\rangle})$ we consider 
\begin{align*}
Ad(Q).(\alpha, Ae^{\widetilde{F}})=(\alpha, Q(\cdot+\alpha)^{-1}AQ\exp\{Q^{-1} \widetilde{F} Q \}). 
\end{align*}
Since $\widetilde{F}\in \mathfrak{B}^{(re)}_{\tilde h}$  is of  the form $(\ref{re-form})$ with $f$ replaced by $\widetilde f$ we get
\begin{eqnarray*}
Q(\theta)^{-1} \widetilde{F} (\theta)Q(\theta)&=&\sum_{|k|\leq n N_j}\sum_{(p,q)\in \mathcal{Z}_k} e^{2\pi i \langle k+k^{(q)}-k^{(p)}, \theta \rangle}\widehat{\widetilde{f}}_{p,q}(k)  E(p,q)\\[0.3cm]
&+& Q(\theta) ^{-1}  (R_{N_{j+1}}\widetilde{F} ) (\theta)Q(\theta) = ( \uppercase\expandafter{\romannumeral 1 })+ (\uppercase\expandafter{\romannumeral 2} ) .
\end{eqnarray*}
It follows from the definition of  $\mathcal{Z}_k$ that $(\uppercase\expandafter{\romannumeral 1 })$ is a constant and by \eqref{eq:norm-relation} and \eqref{rem-est2} one gets
\begin{eqnarray*}
|(\uppercase\expandafter{\romannumeral 1 })|
&\leq &\sum_{|k|\leq n N_j}|\sum_{(p,q)\in \mathcal{Z}_k} \widehat{\widetilde{f}}_{p,q}(k)  E(p,q)|\\
&\leq  & n^2 |\widetilde{F}|_{1,\tilde h} \leq    \varepsilon^{1-3\sigma}.
\end{eqnarray*}
On the other hand, 
\begin{eqnarray}
|Q|_{h}
\leq e^{n N_j h}\leq  e^{\kappa N h}\leq \frac{1}{2}\varepsilon^{-K_*},
\end{eqnarray}
where $K_*=K_*(\kappa)>0$ is a constant depending only on $\kappa$. In view of (\ref{rem-fou-est}) and \eqref{eq:N}-\eqref{rem-est2} this  implies 
\begin{eqnarray*}
|(\uppercase\expandafter{\romannumeral 2 })|_{(1-\kappa)h}&\le & e^{2n N_j h} |R_{N_{j+1}}\widetilde{F}|_{(1-\kappa)h} \\
&\le&   e^{2n N_j h} \left\{c_*
     \frac{N_{j+1}^d }{(\kappa h)^d} e^{-\kappa N_{j+1} h} \varepsilon^{1-2\sigma} \right\} \\
& = &
e^{- N_j h} \left\{c_*
     \frac{N_{j+1}^d }{(\kappa h)^d}  \varepsilon^{1-2\sigma} \right\}\\
&\leq& 
\varepsilon^{2}  \left\{c_*
     \frac{N_{j+1}^d }{(\kappa h)^d}  \varepsilon^{1-2\sigma} \right\}
\leq  \varepsilon^{1+2\sigma}.
\end{eqnarray*}
Finally, we obtain  the conjugation
\begin{eqnarray}\label{pre-conj}
Ad(Q e^{Y}).(\alpha, A e^{F})=(\alpha,\widetilde{A}e^{(\uppercase\expandafter{\romannumeral 1 })+(\uppercase\expandafter{\romannumeral 2})} ),
\end{eqnarray}
where  $\widetilde{A}:=Q(\cdot+\alpha)^{-1} AQ$ is a constant. Moreover, there exists $F_+\in C^{\omega}_{(1-\kappa)h}(\T^d, U(n))$ (one can take 
$F_+=\log\{e^{-(\uppercase\expandafter{\romannumeral 1 })}e^{(\uppercase\expandafter{\romannumeral 1 })+(\uppercase\expandafter{\romannumeral 2})}\}$) 
such that 
\begin{eqnarray*}
\widetilde{A}e^{(\uppercase\expandafter{\romannumeral 1 })+(\uppercase\expandafter{\romannumeral 2})}
=\widetilde{A}e^{(\uppercase\expandafter{\romannumeral 1 })}e^{F_+}
\end{eqnarray*}
and using (\ref{tech-est}) we get
\[|F_+|_{(1-\kappa)h}\leq 2 \left(|(\uppercase\expandafter{\romannumeral 1 })|^2+|(\uppercase\expandafter{\romannumeral 2})|_{(1-\kappa)h}\right) \le 
4\varepsilon^{1+2\sigma} \leq \varepsilon^{1+\sigma}.\]
Recall that $(\uppercase\expandafter{\romannumeral 1 })$ is a constant. Then $A_+=\widetilde{A}e^{(\uppercase\expandafter{\romannumeral 1 })}\in U(n)$ is a constant matrix which satisfies 
\begin{eqnarray}\label{a-est}
|A_+-\widetilde{A}|\leq 2 |(\uppercase\expandafter{\romannumeral 1 })| \leq 2 \varepsilon^{1-3\sigma} \leq \varepsilon^{1/2} 
\end{eqnarray}
in view of (\ref{tech-est}). 
Setting $R=e^{Y} Q$, one obtains from  $(\ref{pre-conj})$ the equality
\begin{eqnarray}\label{last-conj}
Ad(R).(\alpha, A e^{F})=(\alpha, A_+e^{F_+} ).
\end{eqnarray}
One  can check easily the desired estimates.  

The relation  $R=e^{Y} Q\in
\Pi( N, 1/n, \varepsilon^{1-4\sigma})$ follows from Lemma \ref{Lemma:product}, since $e^{Y} \in
\Pi( 0, 1, \varepsilon^{1-4\sigma})$ by Lemma \ref{Lemma:Pi}, {\em 4)},  and $Q\in
\Pi( N, 1/n, 0)$  by Lemma \ref{Lemma:Pi}, {\em 3)}.

If $A\in {\rm NR \, }(N,\varepsilon^\sigma)$ using $(\ref{yf-est})$ and $(\ref{a-est})$ ) we get  $Q=I$, $R=e^{Y}$ and  $A=\widetilde{A}$ with  $|Y|_{h}<\varepsilon^{1-2\sigma}$ and $|A_+-A|\leq \varepsilon^{1/2}$.
This completes the proof of the proposition. \end{proof}

\begin{Remark}
This proof can extended  for cocycles on other groups, for example, $GL(n,\C)$, $GL(n,\R)$, $SO(n,\R)$, as well as  for general compact semi-simple Lie groups.
\end{Remark}
For any $\kappa>0$, repeating the  KAM step   infinitely many  times and choosing $h_m=(1-\frac{\kappa}{2^m})h$ and  $\varepsilon_m=\varepsilon^{(1+\sigma)^m}$ and the corresponding $N_m$ as above, we are going to  obtain  almost reducibility  with analytic radius of conjugations decreasing to $(1-2\kappa)h$ in the next section. The sequence $\varepsilon_m$ will be well-adapted to the corresponding Gevrey class choosing $\varepsilon =\varepsilon _0$ as in  \eqref{eps-cond1}.  Almost reducibility for Gevrey cocycles  has been  proved  by   Chavaudret in \cite{C1}. We point out that the KAM Step and the  KAM iteration scheme below are somewhat  different from that  in \cite{C1}.  

\subsection{The iterative Lemma}\label{iter}

We assume that $A\in U(n)$ and $G\in \mathcal{G}_L^\rho(\T^d, u(n))$, where $\rho>1$. Let $\sigma$ , $\chi$, $\delta_0$ and $K_*$ be the same  as in the assumptions of Proposition \ref{KAM}. 
Let  $0<h_0\leq 1/2L$ ($h_0$ will be specified later) and set
\begin{eqnarray}
&&\kappa =1-\left(1+\frac{\sigma}{2}\right)^{1-\rho},\ \mbox{and} \nonumber\\
&& h_m:=(1-\kappa)h_{m-1}=(1-\kappa)^mh_0=h_0 \left(1+\frac {\sigma}{2}\right)^{(1-\rho)m} ,\, m\in\N.
\end{eqnarray}
By Proposition \ref{app-lem} one can find a
sequence of $G_m\in C^\omega_{h_m}(\T^d, u(n))$ satisfying
\begin{equation}\label{G}
\left\{ \begin{array}{l}
|G_0|_{h_0}\leq c_0 L^d (1+e^{-(cL
h_0)^{-1/(\rho-1)})})\|G\|_{L}\\[0.3cm]
|G_m-G_{m-1}|_{h_{m}}\leq c_0 L^d e^{-(cL
h_{m-1})^{-1/(\rho-1)})}\|G\|_{L}, \ m\geq 1 \\[0.3cm]
\displaystyle\sup_{\theta\in \T^d}|G(\theta)-G_{m-1}(\theta)|\leq c_0 L^d
e^{-(cL h_m)^{-1/(\rho-1)})}\|G\|_{L}, \  m\geq 1, 
\end{array} \right.
\end{equation}
where
 $c=c(\rho)>0$ and $c_0=c_0(\rho,d)\ge 1$. We set
\begin{equation}\label{eps-cond1}
\left\{ \begin{array}{l}
\varepsilon_0:=\exp\left(-\frac{1}{16(1+K_*/\sigma)}(cLh_0)^{-\frac{1}{\rho-1}}\right),\ \mbox{and} \\[0.3cm]
\varepsilon_m:=\varepsilon_0^{(1+\frac{\sigma}{2})^m}=\exp\left(-\frac{1}{16(1+K_*/\sigma)}(cLh_m)^{-\frac{1}{\rho-1}}\right),
\quad m\in\N.
\end{array} \right.
\end{equation} 
Choose $0<h_0\ll 1$ small enough depending only on $\sigma$, $\chi$, $L$, $\rho$, and $\delta_0$ such that
\begin{eqnarray}\label{small-cond}
cLh_0 \le 1, \quad \varepsilon_0^{\sigma/2} \le \frac{1}{8},
\end{eqnarray}
and
\begin{equation}\label{KAM-cond}
\varepsilon_m=\varepsilon_0^{(1+\frac{\sigma}{2})^m}=\exp\left(-\frac{1}{16(1+K_*/\sigma)}(cLh_m)^{-\frac{1}{\rho-1}}\right)\leq \delta_0 h_m^\chi
\end{equation}
for any $ ,\quad m\in\N$. 
We suppose as well that for any matrix $P$
 satisfying $|P-I|\leq \varepsilon_m \le \varepsilon_0$ the following inequality holds true
\begin{eqnarray}\label{log-cond}
 |\log P|\leq 2|P-I| \leq 2 \varepsilon_m. 
\end{eqnarray}
It is enough to get the above inequality for $m=0$ then they follow automatically for each $m\in\N$. 
\\
 
We are going to impose a smallness  condition  on $G$. 
\begin{Lemma}\label{Lemma:epsilon}
Assume that
\begin{eqnarray}\label{GEV-cond}
\|G\|_{L}\le \frac{\varepsilon_0}{2c_0L^d}\, .
\end{eqnarray}
Then
\begin{equation}
\begin{array}{lcrr}
\left\{ \begin{array}{l}|G_0|_{h_0}\leq \varepsilon_0\\[0.3cm]
|G_m-G_{m-1}|_{h_{m}}\leq \varepsilon_{m+1}^{4(1+K_*/\sigma)}, \quad m\geq 1, \\[0.3cm]
\sup_{\theta\in \T^d}|G(\theta)-G_{m-1}(\theta)|_{h_m}\leq
\varepsilon_{m+1}^{4(1+K_*/\sigma)}.
\end{array} \right.
\end{array}
\label{eq:estimates-G}
\end{equation}
\end{Lemma}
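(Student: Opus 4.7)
The plan is to substitute the smallness hypothesis $\|G\|_L\le \varepsilon_0/(2c_0L^d)$ directly into the three approximation estimates from \eqref{G} and then compare exponents. The first bound is immediate: since $e^{-(cLh_0)^{-1/(\rho-1)}}\le 1$, the first line of \eqref{G} gives $|G_0|_{h_0}\le 2c_0L^d\|G\|_L\le \varepsilon_0$.

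For the remaining two bounds, the point is to verify that the special form of $\varepsilon_m$ in \eqref{eps-cond1} has been chosen exactly so that the approximation rate dominates the right-hand side. Writing $A=(16(1+K_*/\sigma))^{-1}$ and using $h_{m+1}=(1-\kappa)h_m$ together with the definition $\kappa=1-(1+\sigma/2)^{1-\rho}$, which yields $(1-\kappa)^{-1/(\rho-1)}=1+\sigma/2$, I compute
\[
\varepsilon_{m+1}^{4(1+K_*/\sigma)} =\exp\!\left(-\tfrac14(cLh_{m+1})^{-1/(\rho-1)}\right)=\exp\!\left(-\tfrac{1+\sigma/2}{4}(cLh_m)^{-1/(\rho-1)}\right),
\]
and analogously $(cLh_{m-1})^{-1/(\rho-1)}=(cLh_m)^{-1/(\rho-1)}/(1+\sigma/2)$. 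Combining the second line of \eqref{G} with the hypothesis on $\|G\|_L$ then gives
\[
|G_m-G_{m-1}|_{h_m}\le \tfrac{\varepsilon_0}{2}\exp\!\left(-(cLh_{m-1})^{-1/(\rho-1)}\right)=\tfrac12\exp\!\left(-A(cLh_0)^{-1/(\rho-1)}-\tfrac{(cLh_m)^{-1/(\rho-1)}}{1+\sigma/2}\right).
\]
After dropping the positive $h_0$-contribution, matters reduce to the elementary inequality $1/(1+\sigma/2)\ge (1+\sigma/2)/4$, equivalent to $(1+\sigma/2)^2\le 4$, which is trivially true for any $\sigma\le 2$, and in particular for our $\sigma\le 1/100$ fixed by \eqref{eq:sigma}. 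The factor $1/2$ is absorbed without difficulty, e.g.\ using \eqref{small-cond}. The third estimate is even easier: the third line of \eqref{G} has $h_m$ rather than $h_{m-1}$ in the exponential, so the required comparison becomes $1\ge (1+\sigma/2)/4$, which is obvious.

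The step that I expect to require the most care is purely bookkeeping of the constants — verifying that the coefficient $1/(16(1+K_*/\sigma))$ appearing in the definition of $\varepsilon_0$, the geometric shrinkage factor $(1+\sigma/2)$ coming from $\kappa$, and the exponent $4(1+K_*/\sigma)$ imposed on the target in \eqref{eq:estimates-G} interlock correctly, so that no additional largeness hypothesis on $h_0$ beyond those already collected in \eqref{small-cond}--\eqref{log-cond} is needed. Once these algebraic relations are tracked, there is essentially no analytic content left to prove: the lemma is exactly the statement that the definitions in \eqref{eps-cond1} were calibrated to the Gevrey approximation rate of Proposition \ref{app-lem}.
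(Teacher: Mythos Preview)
Your proof is correct and follows essentially the same route as the paper: both arguments plug the hypothesis \eqref{GEV-cond} into \eqref{G} and then reduce everything to the elementary inequality $(1+\sigma/2)^2\le 4$, which converts the Gevrey approximation rate $e^{-(cLh_{m-1})^{-1/(\rho-1)}}$ into the target $\varepsilon_{m+1}^{4(1+K_*/\sigma)}=e^{-\frac14(cLh_{m+1})^{-1/(\rho-1)}}$. The only cosmetic difference is that the paper jumps directly from $h_{m-1}$ to $h_{m+1}$ (picking up the factor $(1+\sigma/2)^{-2}$ in one step), whereas you route both sides through $h_m$; the underlying inequality is identical.
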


\begin{proof}
The claim follows directly from \eqref{G},  \eqref{eps-cond1} and \eqref{GEV-cond} since
\begin{equation}\label{tech-cond}
\begin{array}{lcrr}
\displaystyle e^{-(cL h_{m-1})^{-1/(\rho-1)}}=e^{-(1+\frac{\sigma}{2})^{-2}(cLh_{m+1})^{-\frac{1}{\rho-1}}}\\[0.3cm] 
\displaystyle < e^{-\frac{1}{4}(cLh_{m+1})^{-\frac{1}{\rho-1}}} = 
\varepsilon_{m+1} ^{4(1+K_*/\sigma)}
\end{array}
\end{equation}
for $m\ge 1$. \end{proof}

From now on we denote by $\left(N_m\right)_{m\in\N}$   the increasing  sequence
\begin{equation}
\begin{array}{lcrr}
\displaystyle N_m:=\left(\frac{2n+1}{\kappa}\right)^n\left(\frac{2}{ h_m} \log \frac{1}{\varepsilon_m}+1\right)\\[0.3cm]
\displaystyle =\left(\frac{2n+1}{1-(1+\frac{\sigma}{2})^{1-\rho}}\right)^n\left(\frac{2}{ h_m} \log \frac{1}{\varepsilon_m}+1\right) .
\end{array}
\label{eq:N-m}
\end{equation}
In view of  $(\ref{KAM-cond})$,  one can apply Proposition \ref{KAM}
for  $\varepsilon=\varepsilon_m$, $h=h_m$ and
$N=N_m$, $m\in\N$.  Set
\begin{equation}
L_m=N_0+\cdots+N_{m-1}.
\label{eq:L-m}
\end{equation}
We can state now the iterative Lemma. 

\begin{Lemma}\label{anal-appro-gev}
For each $m\geq 0$ there is $A_m\in U(n)$, $F_m\in
C^\omega_{h_m}(\T^d, u(n))$, and $R^{(m)}\in C^\omega_{h_m}(\T^d,
U(n))$, such that
\begin{eqnarray}\label{anal-appro-gev-sequence}
\left\{ \begin{array}{l} {\rm Ad\, }(R^{(m)}).(\alpha, Ae^{G_{m-1}})=(\alpha, A_{m}e^{F_{m}}) \\[0.3cm]
|F_{m}|_{h_{m}}\leq \varepsilon_{m}\\[0.3cm]
|R^{(m)}|_{h_m}\leq \varepsilon_m^{-2K_*/\sigma}\\[0.3cm]
R^{(m)}\in \Pi(L_{m}, \frac{1}{n^{m}},
\frac{1}{n^{m-1}}\varepsilon_{0}^{1-4\sigma}+\cdots+\frac{1}{n}\varepsilon_{m-2}^{1-4\sigma}+\varepsilon_{m-1}^{1-4\sigma})
\end{array} \right.
\end{eqnarray}
(recall that the definition and property of $\Pi$ has been given in section 2.3).
\end{Lemma}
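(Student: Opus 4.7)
The plan is to prove the lemma by induction on $m$, applying the KAM Step (Proposition~\ref{KAM}) at each stage and controlling the cost of replacing the analytic approximant $G_{m-1}$ of $G$ by the better approximant $G_m$ supplied by Proposition~\ref{app-lem}. For the base case $m=0$, take $R^{(0)}=I$, $A_0=A$, $F_0=0$, with the convention $G_{-1}=0$. All four conditions are then trivially satisfied, with $R^{(0)}=I\in\Pi(0,1,0)$ by Lemma~\ref{Lemma:Pi}(1), and with $L_0$ and the error sum both empty.

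For the induction step, I would first transport the conjugation $R^{(m)}$ through the upgraded cocycle. Using the factorisation $Ae^{G_m}=Ae^{G_{m-1}}\cdot e^{-G_{m-1}}e^{G_m}$ together with the inductive hypothesis,
\[
\mathrm{Ad}(R^{(m)}).(\alpha, Ae^{G_m})=(\alpha, A_m e^{\widetilde F_m}),\quad e^{\widetilde F_m}:=e^{F_m}\cdot (R^{(m)})^{-1}e^{-G_{m-1}}e^{G_m}R^{(m)}.
\]
Since $R^{(m)}$ is unitary on $\T^d$, the reflection identity $R^{(m)}(\theta)^{-1}=R^{(m)}(\bar\theta)^{*}$ extends the bound $|R^{(m)}|_{h_m}\leq \varepsilon_m^{-2K_*/\sigma}$ to $(R^{(m)})^{-1}$ on the whole strip. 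Combined with Lemma~\ref{Lemma:epsilon} (which gives $|G_m-G_{m-1}|_{h_m}\leq \varepsilon_{m+1}^{4(1+K_*/\sigma)}$) and \eqref{log-cond}, this yields $|\widetilde F_m-F_m|_{h_m}\ll \varepsilon_m$, so $|\widetilde F_m|_{h_m}$ remains comparable to $\varepsilon_m$.

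Next, I would apply Proposition~\ref{KAM} to $(\alpha, A_m e^{\widetilde F_m})$ at scale $h_m$ with our chosen $\kappa$; condition \eqref{KAM-cond} verifies the smallness hypothesis. This produces $R_{m+1}\in C^\omega_{h_{m+1}}(\T^d,U(n))$, a constant $A_{m+1}\in U(n)$, and $F_{m+1}$ with $\mathrm{Ad}(R_{m+1}).(\alpha, A_m e^{\widetilde F_m})=(\alpha, A_{m+1} e^{F_{m+1}})$, together with $|F_{m+1}|_{h_{m+1}}\leq \varepsilon_{m+1}$ (the margin $\sigma/2$ built into $\varepsilon_{m+1}=\varepsilon_m^{1+\sigma/2}$ absorbs the extra constant when $|\widetilde F_m|_{h_m}$ is slightly larger than $\varepsilon_m$), $|R_{m+1}|_{h_{m+1}}\leq \varepsilon_m^{-K_*}$, and $R_{m+1}\in\Pi(N_m, 1/n, \varepsilon_m^{1-4\sigma})$. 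Setting $R^{(m+1)}:=R^{(m)}R_{m+1}$ and using the composition rule $\mathrm{Ad}(R_{m+1})\circ\mathrm{Ad}(R^{(m)})=\mathrm{Ad}(R^{(m)}R_{m+1})$, the announced conjugation follows. The norm bound propagates as $|R^{(m)}|_{h_m}|R_{m+1}|_{h_{m+1}}\leq \varepsilon_m^{-2K_*/\sigma-K_*}=\varepsilon_{m+1}^{-2K_*/\sigma}$; the $\Pi$-membership of $R^{(m+1)}$ follows directly from Lemma~\ref{Lemma:product}, with widths $L_m+N_m=L_{m+1}$, scale factors $n^{-m}\cdot n^{-1}=n^{-(m+1)}$, and the error terms combining as $\tfrac{1}{n}(\tfrac{1}{n^{m-1}}\varepsilon_0^{1-4\sigma}+\cdots+\varepsilon_{m-1}^{1-4\sigma})+\varepsilon_m^{1-4\sigma}$, matching the announced sum.

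The main obstacle is the bootstrap estimate $|\widetilde F_m-F_m|_{h_m}\ll \varepsilon_m$. Because the conjugation $R^{(m)}$ may blow up like $\varepsilon_m^{-2K_*/\sigma}$ due to the resonances handled in Proposition~\ref{KAM}, two such factors multiply the Gevrey approximation error $|G_m-G_{m-1}|_{h_m}$. The choice of the constant $16(1+K_*/\sigma)$ in the definition \eqref{eps-cond1} of $\varepsilon_m$ is designed precisely so that $|G_m-G_{m-1}|_{h_m}\leq \varepsilon_{m+1}^{4(1+K_*/\sigma)}$ dominates $\varepsilon_m^{4K_*/\sigma}$ by a positive power of $\varepsilon_m$ (namely $\varepsilon_m^{4+2\sigma+2K_*}$), which is far smaller than both $\varepsilon_m$ and $\varepsilon_{m+1}$; this is what makes the iteration self-sustaining despite the potential loss encoded in $K_*$.
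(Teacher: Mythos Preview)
Your proposal is correct and follows essentially the same induction as the paper; the only difference is the order of the two operations in the inductive step. The paper first applies Proposition~\ref{KAM} to $(\alpha,A_me^{F_m})$ with input exactly $\varepsilon_m$ (yielding $R_m$, $A_{m+1}$, $F_{m+1}^{(0)}$), and only afterwards absorbs the $G_m-G_{m-1}$ correction into the definition of $F_{m+1}$, whereas you first fold the correction into $\widetilde F_m$ and then apply the KAM step. Both orderings work, and the key estimate---that the conjugated approximation error $\varepsilon_m^{-4K_*/\sigma}\cdot\varepsilon_{m+1}^{4(1+K_*/\sigma)}$ is dominated by a high power of $\varepsilon_m$---is identical in both.

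There is one small bookkeeping wrinkle in your ordering that you should make explicit: because $|\widetilde F_m|_{h_m}$ may overshoot $\varepsilon_m$ by a bounded factor, Proposition~\ref{KAM} must be invoked with some $\varepsilon'=C\varepsilon_m$ rather than $\varepsilon_m$ itself. This shifts the truncation order $N$ and the $\Pi$-error term by harmless constants that do not exactly match the $N_m$ and $\varepsilon_m^{1-4\sigma}$ in the stated lemma (though they are absorbed in the later application \eqref{eq:R}). The paper's ordering sidesteps this because KAM is applied with precisely $\varepsilon_m$, and the correction, being controlled by $|R^{(m+1)}|_{h_{m+1}}^2\cdot|G_m-G_{m-1}|_{h_{m+1}}\le 2\varepsilon_{m+1}^4$, is added \emph{after} the step and fits inside the $\varepsilon_{m+1}$ budget. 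If you want to reproduce the exact constants in the statement, either swap the order as the paper does, or strengthen the inductive hypothesis to $|F_m|_{h_m}\le\tfrac12\varepsilon_m$ so that $|\widetilde F_m|_{h_m}\le\varepsilon_m$ holds outright.
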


\begin{proof} Applying Proposition \ref{KAM} one can find $R^{(1)}$
such that
\begin{eqnarray*}
\left\{ \begin{array}{l} {\rm Ad\, }(R^{(1)}).(\alpha, Ae^{G_0})=(\alpha, A_{1}e^{F_{1}}) \\[0.3cm]
|F_{1}|_{h_{1}}\leq \varepsilon_0^{1+\sigma}< \frac{1}{8}\varepsilon_1\\[0.3cm]
|R^{(1)}|_{h_1}\leq \varepsilon_0^{-K_* } = \varepsilon_1^{-K_*/(1+\frac{\sigma}{2}) } < \varepsilon_1^{-2K_*/\sigma }\\[0.3cm]
R^{(1)}\in \Pi (N_0, 1/n, \varepsilon_0^{1-4\sigma}).
\end{array} \right.
\end{eqnarray*}
Arguing by induction assume that for given  $m\geq 2$ we have
\begin{eqnarray*}
\left\{ \begin{array}{l} {\rm Ad\, }(R^{(m)}).(\alpha, Ae^{G_{m-1}})=(\alpha, A_{m}e^{F_{m}}) \\[0.3cm]
|F_{m}|_{h_{m}}\leq \varepsilon_m\\[0.3cm]
|R^{(m)}|_{h_{m}}\leq \varepsilon_{m}^{-2K_*/\sigma}\\[0.3cm]
R^{(m)}\in \Pi(L_m, \frac{1}{n^{m}},
\frac{1}{n^{m-1}}\varepsilon_{0}^{1-4\sigma}+\cdots+\frac{1}{n}\varepsilon_{m-2}^{1-4\sigma}+\varepsilon_{m-1}^{1-4\sigma}).
\end{array} \right.
\end{eqnarray*}
Using   Proposition \ref{KAM} and \eqref{small-cond} one can find $R_m$ and
$F_{m+1}^{(0)}\in C^\omega_{h_{m+1}}(\T^d, u(n))$ such that
\begin{eqnarray*}
\left\{ \begin{array}{l} {\rm Ad\, }(R_m).(\alpha, A_me^{F_{m}})=(\alpha, A_{m+1}e^{F_{m+1}^{(0)}}) \\[0.3cm]
|F_{m+1}^{(0)}|_{h_{m+1}}\leq  \varepsilon_m^{1+\sigma}=\varepsilon_m^{\sigma/2}\varepsilon_m^{1+\sigma/2}
\le \frac{1}{8}\varepsilon_{m+1}\\[0.3cm]
|R_m|_{h_{m+1}}\leq \varepsilon_m^{-K_*}\\[0.3cm]
R_m\in \Pi ( N_m, \frac{1}{n}, \varepsilon_{m}^{1-4\sigma}).
\end{array} \right.
\end{eqnarray*}
Setting  $R^{(m+1)}=R^{(m)} R_m$ we obtain
\begin{eqnarray*}
&&R^{(m+1)}(\cdot+ \alpha)^{-1}Ae^{G_m}R^{(m+1)}\\
&=&R_m(\cdot+\alpha)^{-1} A_me^{F_m}R_m+R^{(m+1)}(\cdot+\alpha)^{-1}A(e^{G_m}-e^{G_{m-1}})R^{(m+1)}\\
&=&A_{m+1}e^{F_{m+1}^{(0)}}+R^{(m+1)}(\cdot+\alpha)^{-1}
A(e^{G_m}-e^{G_{m-1}})R^{(m+1)}.
\end{eqnarray*}
Moreover,
\begin{eqnarray*}
|R^{(m+1)}|_{h_{m+1}}\leq |R^{(m)}|_{h_m}|R_m|_{h_{m+1}}\leq
\varepsilon_m^{-K_*(1+2/\sigma)}=
\varepsilon_{m}^{-\frac{2(1+\sigma/2)K_*}{\sigma}}=\varepsilon_{m+1}^{-2K_*/\sigma} 
\end{eqnarray*}
while Lemma \ref{Lemma:product} implies
\begin{eqnarray*}
R^{(m+1)}&\in &\Pi(L_{m}+N_m, \frac{1}{n^{m+1}},
\frac{1}{n}(\frac{1}{n^{m-1}}\varepsilon_{0}^{1-4\sigma}+\cdots+\varepsilon_{m-1}^{1-4\sigma})+\varepsilon_m^{1-4\sigma})\\[0.3cm]
&=& \Pi(L_{m+1}, \frac{1}{n^{m+1}},
\frac{1}{n^m}\varepsilon_0^{1-4\sigma}+\cdots+\frac{1}{n}\varepsilon_{m-1}^{1-4\sigma}+\varepsilon_{m}^{1-4\sigma}).
\end{eqnarray*}
Taking into account  \eqref{eq:estimates-G} one  obtains
\begin{eqnarray*}
|R^{(m+1)}(\cdot+\alpha)^{-1}
A(e^{G_m}-e^{G_{m-1}})R^{(m+1)}|_{h_{m+1}}\\[0.3cm]
\leq
2\varepsilon_{m+1}^{-4K_*/\sigma} \varepsilon_{m+1} ^{4(1+K_*/\sigma)}=2\varepsilon_{m+1}
^4 \le \frac{1}{4}\varepsilon_{m+1} ,
\end{eqnarray*}
Moreover,
\begin{eqnarray*}
|A_{m+1}e^{F_{m+1}^{(0)}}-A_{m+1}|_{h_{m+1}}\leq 2\times \frac{1}{8}
\varepsilon_{m+1}= \frac{1}{4} \varepsilon_{m+1}
\end{eqnarray*}
which implies
\begin{eqnarray*}
| R^{(m+1)}(\cdot+
\alpha)^{-1}Ae^{G_m}R^{(m+1)}-A_{m+1}|_{h_{m+1}}\leq  \frac{1}{4}
\varepsilon_{m+1}+\frac{1}{4}\varepsilon_{m+1} \leq
\frac{1}{2}\varepsilon_{m+1}.
\end{eqnarray*}
Setting 
\[
F_{m+1}:= \log\left(A_{m+1}^{-1}R^{(m+1)}(\cdot+
\alpha)^{-1}Ae^{G_m}R^{(m+1)}-I\right)
\]
and using  $(\ref{log-cond})$ one  obtains 
 \begin{eqnarray*}
\left\{ \begin{array}{l} {\rm Ad\, }(R^{(m+1)}).(\alpha, Ae^{G_{m}})=(\alpha, A_{m+1}e^{F_{m+1}}) \\[0.3cm]
|F_{m+1}|_{h_{m+1}}\leq \varepsilon_{m+1}\\[0.3cm]
|R^{(m+1)}|_{h_{m+1}}\leq \varepsilon_{m+1}^{-2K_*/\sigma}\\[0.3cm]
 R^{(m+1)}\in
\Pi(L_{m+1}, \frac{1}{n^{m+1}},
\frac{1}{n^m}\varepsilon_0^{1-4\sigma}+\cdots+\frac{1}{n}\varepsilon_{m-1}^{1-4\sigma}+\varepsilon_{m}^{1-4\sigma}) .
\end{array} \right.
\end{eqnarray*}
This completes the induction argument and proves the iterative Lemma. 
\end{proof}

\subsection{Gevrey reducibility}\label{Gevrey reducibility}
In the previous section we have established almost reducibility of the Gevrey-${\mathcal G}^\rho$ cocycle $(\alpha, Ae^{G})$. More precisely, for each $m\in\N$ we have obtained in 
Lemma \ref{anal-appro-gev} a map  $R^{(m)}$ which 
conjugates $(\alpha, Ae^{G_{m-1}})$  to $(\alpha, A_me ^{F_m})$,
where $F_m$ is of the size of $\varepsilon_m$. In general, the sequence $R^{(m)}$ diverges. 
Our aim in this section is to prove that
the sequence $R^{(m)}$ is convergent in ${\mathcal G}^\rho$, provided   that there exits
a measurable function $B: \T^d\rightarrow U(n)$ with $\lceil B^{-1}\rfloor=\lceil B^* \rfloor\geq \epsilon$  and
$C\in \Sigma(\alpha)$ such that
\begin{eqnarray}\label{meas-conj}
{\rm Ad\, }(B). (\alpha, Ae^{G})=(\alpha, C)
\end{eqnarray} 
where $0<\epsilon \le 1$. To do this we impose condition \eqref{eq:small-condition-epsilon}. More precisely, choosing $0<h_0<1$ sufficiently small we assume as well that 
\begin{equation}
\varepsilon_0:=\exp\left(-\frac{1}{16(1+K_*/\sigma)}(cLh_0)^{-\frac{1}{\rho-1}}\right) =  \tilde \varepsilon_0 \epsilon^\ell
\label{eq:small-condition-epsilon1}
\end{equation}
where $\tilde \varepsilon_0:=\tilde \varepsilon_0(n,\sigma,\kappa,\rho,L)\ll 1$ is sufficiently small so that all previous assumptions on $\varepsilon_0$ hold when $\epsilon=1$. 
To do this we choose appropriately $h_0$ in a function of $\epsilon$ as well. Setting $\delta:= 2c_0L^d\tilde \varepsilon_0$ and using \eqref{GEV-cond} we obtain the small constant in \eqref{eq:small-condition-epsilon}.
  We assume that 
\begin{equation}
\tilde \varepsilon_0^{ \sigma(1-4\sigma)/2} \le \frac{1}{8n}.
\label{eq:eps-cond4}
\end{equation}
We have  $1-5\sigma\ge 1/\ell$ by \eqref{eq:sigma} which implies 
\begin{equation}\label{eps-cond2}
 \varepsilon_0^{1-4\sigma} <  \tilde\varepsilon_0^{\sigma} \varepsilon_0^{1/\ell} \le  \frac{1}{8  n}\epsilon^\ell
\end{equation}
and we obtain
\begin{eqnarray*}
\frac{1}{n^{m-1}}\varepsilon_{0}^{1-4\sigma}+\cdots+\frac{1}{n}\varepsilon_{m-2}^{1-4\sigma}+\varepsilon_{m-1}^{1-4\sigma} &<& \frac{1}{n^{m-1}}\varepsilon_{0}^{1-4\sigma}\left(1+\frac{1}{2} + \cdots +  \frac{1}{2^{m-1}}\right)\nonumber \\
&< & \frac{2}{n^{m-1}} \varepsilon_{0}^{1-4\sigma} \le  \frac{\epsilon}{4n^{m}} .
\end{eqnarray*}
This inequality combined with Lemma \ref{Lemma:Pi}, $5)$ and Lemma \ref{anal-appro-gev}, implies 
\begin{eqnarray}\label{eq:R}
R^{(m)}&\in &\Pi(L_m, \frac{1}{n^m}, \frac{1}{n^{m-1}}\varepsilon_{0}^{1-4\sigma}+\cdots+\frac{1}{n}\varepsilon_{m-2}^{1-4\sigma}+\varepsilon_{m-1}^{1-4\sigma})\nonumber\\
&\subseteq &\Pi(L_m, \frac{1}{n^m}, \frac{\epsilon}{4n^{m}} ).
\end{eqnarray}
Now we can get a good control on the size of $R_m$.  \\

\begin{Lemma}\label{conj-as-large}
For any $m$ sufficiently large  there is
$Y_m\in C^\omega_{h_{m}}(\T^d, u(n))$ such that
\begin{eqnarray*}
\left\{ \begin{array}{l}R_m=e^{Y_m} \\
|Y_m|_{h_{m}}\leq \varepsilon_m^{1-2\sigma}\\
|A_m-A_{m+1}|\leq \varepsilon_m^{1/2} .
\end{array} \right.
\end{eqnarray*}
\end{Lemma}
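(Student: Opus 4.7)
The plan is to reduce the lemma to Proposition \ref{KAM}(ii) by establishing, for all $m$ sufficiently large, that $A_m\in\mathrm{NR}(N_m,\varepsilon_m^\sigma)$. Once that is done, Proposition \ref{KAM}(ii) applied to the one-step conjugation $(\alpha,A_me^{F_m})\to(\alpha,A_{m+1}e^{F_{m+1}^{(0)}})$ of the Iterative Lemma directly yields $R_m=e^{Y_m}$ together with the claimed bounds on $Y_m$ and on $A_{m+1}-A_m$. The key input for the non-resonance will be the spectral rigidity $C\in\Sigma(\alpha)$ combined with the non-degeneracy hypothesis $\lceil B^*\rfloor\ge\epsilon$.

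Set $H_m(\theta):=B^*(\theta)R^{(m)}(\theta)$, which is pointwise unitary on $\T^d$. Combining the identity $B^*(\theta+\alpha)Ae^{G(\theta)}B(\theta)=C$ with the iterative relation $R^{(m)}(\theta+\alpha)^{-1}Ae^{G_{m-1}(\theta)}R^{(m)}(\theta)=A_me^{F_m(\theta)}$ from Lemma \ref{anal-appro-gev}, one derives by direct substitution the approximate intertwining
\[
H_m(\theta+\alpha)A_m=CH_m(\theta)+\mathrm{rem}(\theta),
\]
with $|\mathrm{rem}|_\infty\le 3\varepsilon_m$, using $|G-G_{m-1}|_\infty\le\varepsilon_{m+1}^{4(1+K_*/\sigma)}$ from Lemma \ref{Lemma:epsilon}, $|F_m|_{h_m}\le\varepsilon_m$, and the unitarity of $H_m$ and $A_m$. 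Diagonalising $A_m=S_m^*\Lambda_mS_m$ and $C=D^*\Lambda_CD$ with $\Lambda_m=\mathrm{diag}(\lambda_1^{(m)},\ldots,\lambda_n^{(m)})$ and $\Lambda_C=\mathrm{diag}(\mu_1,\ldots,\mu_n)$, and passing to Fourier coefficients, the entries of $\widetilde H_m:=DH_mS_m^*$ satisfy
\[
\bigl(\lambda_q^{(m)}e^{2\pi i\langle k,\alpha\rangle}-\mu_p\bigr)\widehat{\widetilde H_m}(k)_{p,q}=(D\widehat{\mathrm{rem}}(k)S_m^*)_{p,q},
\]
whose modulus is bounded by $3\varepsilon_m$.

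Next I inject the non-degeneracy. By Lemma \ref{Lemma:Gamma}(2) there exists $N_\flat=N_\flat(B,\epsilon)$ such that $B^*\in\Gamma(N,\epsilon/2)$ for all $N\ge N_\flat$. Combined with $R^{(m)}\in\Pi(L_m,n^{-m},\epsilon/(4n^m))$ from \eqref{eq:R} and the very definition of $\Pi$, this gives $H_m=B^*R^{(m)}\in\Gamma(N_\flat+L_m,\epsilon/(4n^m))$, and the same bound holds for $\widetilde H_m$ since constant-unitary pre/post-multiplication preserves $\lceil\cdot\rfloor$. Consequently, for every $p$ there exist $q(p)$ and $k(p)\in\Z^d$ with $|k(p)|\le N_\flat+L_m$ and $|\widehat{\widetilde H_m}(k(p))_{p,q(p)}|\ge\epsilon/(4n^m)$. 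Inserting this into the previous display yields
\[
\bigl|\lambda_{q(p)}^{(m)}e^{2\pi i\langle k(p),\alpha\rangle}-\mu_p\bigr|\le\eta_m,\qquad\eta_m:=\frac{12\, n^m\varepsilon_m}{\epsilon}.
\]

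To finish, I rule out both a failure of injectivity of $p\mapsto q(p)$ and a resonance of $A_m$. In the first case, two distinct $\mu_{p_1},\mu_{p_2}$ would satisfy $|\mu_{p_1}-\mu_{p_2}e^{2\pi i\langle k,\alpha\rangle}|\le 2\eta_m$ for some $|k|\le 2(N_\flat+L_m)$, contradicting $C\in\Sigma(\alpha;\chi,\nu)$. In the second, any relation $|\lambda_{q_1}^{(m)}-\lambda_{q_2}^{(m)}e^{2\pi i\langle k^*,\alpha\rangle}|<\varepsilon_m^\sigma$ with $0<|k^*|\le N_m$ transfers, via $\lambda_q^{(m)}\approx\mu_{q^{-1}(q)}e^{-2\pi i\langle k(q^{-1}(q)),\alpha\rangle}$, either to a resonance of $C\in\Sigma(\alpha)$ (when $q_1\ne q_2$) or to $|1-e^{2\pi i\langle k^*,\alpha\rangle}|<\varepsilon_m^\sigma$ (when $q_1=q_2$), contradicting $\alpha\in\mathrm{DC}(\gamma,\tau)$. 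The main obstacle is the quantitative check that these contradictions occur for all $m\gg 1$: by \eqref{eps-cond1} and \eqref{eq:N-m}, $N_m$ and $L_m$ grow only geometrically with ratio $(1+\sigma/2)^\rho$, while $\eta_m$ and $\varepsilon_m^\sigma$ decay super-geometrically, so the Diophantine lower bounds $\chi/(1+|k|)^\nu$ and $\gamma^{-1}/|k|^\tau$ eventually dominate $\varepsilon_m^\sigma+2\eta_m$. Hence $A_m\in\mathrm{NR}(N_m,\varepsilon_m^\sigma)$ for $m\gg 1$, and Proposition \ref{KAM}(ii) concludes.
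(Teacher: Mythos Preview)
Your proof is correct and follows essentially the same approach as the paper's own argument: you set up the approximate intertwining $H_m(\cdot+\alpha)A_m=CH_m+O(\varepsilon_m)$ with $H_m=B^*R^{(m)}$, use the non-degeneracy $B^*\in\Gamma(N_\flat,\epsilon/2)$ together with $R^{(m)}\in\Pi(L_m,n^{-m},\epsilon/(4n^m))$ to locate, for each $p$, an eigenvalue $\lambda^{(m)}_{q(p)}$ close to $\mu_p$, then argue injectivity of $p\mapsto q(p)$ and non-resonance of $A_m$ by transferring any putative resonance back to $C\in\Sigma(\alpha)$ or $\alpha\in\mathrm{DC}(\gamma,\tau)$, finally invoking Proposition~\ref{KAM}(ii). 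The paper organizes the same steps into two auxiliary lemmas (your injectivity and non-resonance claims are exactly its Lemma~\ref{unique} and Lemma~\ref{A}), with the same quantitative comparison of the geometric growth of $N_m,L_m$ against the super-geometric decay of $\varepsilon_m^\sigma$ and $n^m\varepsilon_m/\epsilon$.
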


\begin{proof} The idea is to prove    
that 
$A_m\in {\rm NR \, }(N_m,\varepsilon_m^\sigma)$ for large  $m$ which allows us to use   Proposition \ref{KAM}, (ii). 
In the following we will need  only the  $L^\infty$ norm $\|\cdot\|_{\infty}$ on $\T^n$. 

 By Lemma $\ref{anal-appro-gev}$ and  (\ref{meas-conj}) ) we have the equalities 
\[
\left\{
\begin{array}{lcrr}
{\rm Ad\, }(R^{(m)}).(\alpha, Ae^{G_{m-1}})=(\alpha, A_m e^{F_m}), \\[0.3cm]
{\rm Ad\, }(B). (\alpha, Ae^{G})=(\alpha, C).
\end{array}
\right.
\]
Setting  $B_m=B^{-1}R^{(m)}$ and using  \eqref{eq:estimates-G} we obtain
\[
{\rm Ad\, }(B_m).(\alpha, C)=\left(\alpha, A_m
e^{F_m}+O(\varepsilon_{m+1}^{4(1+K_*/\sigma)})\right) .
\]
Hereafter, the symbol $O(\varepsilon_{m}^\alpha)$, $\alpha\in\R$, stands for a map $W:\T^d\to M_n(\R)$ such that $ \|W\|_{\infty} \le c \varepsilon_{m}^\alpha$, where $c>0$ does not depend on $m$ and $\theta$. 
Since the $L^\infty$-norm of any measurable unitary matrix function is $1$ this yields 
\begin{eqnarray*}
B_m(\cdot+\alpha)A_m(I+O(\varepsilon_{m}))=CB_m, 
\end{eqnarray*}
which implies 
\begin{eqnarray}\label{meas-conj-seq}
B_m(\cdot+\alpha)A_m=C B_m+O(\varepsilon_m).
\end{eqnarray}
By Lemma \ref{Lemma:Gamma}  there exists  $N_*$, such that $B^{-1}=B^\ast\in \Gamma(N_*, \epsilon/2)$, 
and using \eqref{eq:R} and  \eqref{eq:Pi} applied to $B^{*}$ we arrive at 
\begin{eqnarray}\label{meas-conj-est}
B_m=B^{*}R^{(m)}&\in &\Gamma(N_*+L_m,
\frac{\epsilon}{2n^m}- \frac{\epsilon}{4n^{m}} )\nonumber\\
&= &\Gamma(N_*+L_m,
\frac{\epsilon}{4 n^m}).
\end{eqnarray}
Recall that $C\in \Sigma(\alpha)$, which means that  there exist $\chi,\nu>0$,   
such that  
\begin{eqnarray}\label{nn7}
|\mu_{p}e^{2\pi  i\langle k,\alpha\rangle}-\mu_q|\geq
\frac{\chi}{(1+|k|)^\nu}, \quad \mbox{for any} \quad p\neq q , \quad k\in\Z^d,
\end{eqnarray}
where $
\{\mu_1,\cdots,\mu_n\}={\rm Spec\, }(C)$ .
Let $T\in U(n)$ be such that \[TC T^*= {\rm diag\, }(\mu_1,\cdots,\mu_n). \]
Choose  $S_m\in U(n)$ such that  
\[
S_m^*A_mS_m= {\rm diag\, }(\lambda^{(m)}_1,\cdots,\lambda^{(m)}_n)
\]
and set  $D_m=T^*B_mS_m$. By Lemma \ref{Lemma:Gamma} 
\begin{equation}
D_m\in
\Gamma\left(N_*+L_m,\frac{\epsilon}{4n^{m}}\right) 
\label{eq:D}
\end{equation}
and using 
(\ref{meas-conj-seq})   we obtain
\[
D_m(\cdot+\alpha){\rm diag\,
}(\lambda^{(m)}_1,\cdots,\lambda^{(m)}_n)= {\rm diag\,
}(\mu_1,\cdots,\mu_n)D_m+O(\varepsilon_m).
\]
which means that
\begin{eqnarray*}
D_m(\cdot+\alpha) {\rm diag\, }(\lambda^{(m)}_1,\cdots,\lambda^{(m)}_n)+W_m= {\rm diag\, }(\mu_1,\cdots,\mu_n)D_m
\end{eqnarray*}
with $\|W_m \|_{\infty}\leq \mbox{cst.}\, 
\varepsilon_m$.  Thus for any $1\le p,q\le n$  we have
\begin{eqnarray*}
e^{ 2\pi i\langle k,\alpha\rangle}\lambda^{(m)}_q\widehat{d}^{(m)}_{p,q}(k)+\widehat{w}^{(m)}_{p,q}(k)=\mu_p
\widehat{d}^{(m)}_{p,q}(k)
\end{eqnarray*}
where $\widehat{d}^{(m)}_{p,q}(k)$ and $\widehat{w}^{(m)}_{p,q}(k)$ stand fot the $(p,q)$ entries of the matrices $\widehat{D}_m(k)$ and of $\widehat{W}_m(k)$ respectively. We have  \[
|\widehat{w}^{(m)}_{p,q}(k)|\leq \|W_m \|_{\infty}\leq \mbox{cst.}\varepsilon_m
\] 
which implies 
\begin{eqnarray}\label{nn5}
|\widehat{d}^{(m)}_{p,q}(k)\|\lambda^{(m)}_qe^{2\pi
i\langle k,\alpha\rangle}-\mu_p|\leq \mbox{cst.} \varepsilon_m
\end{eqnarray}  
for all 
$p,q\in \{1,\cdots,n\}$ and $k\in \Z^d$. 
It follows from \eqref{eq:D} and the definition of $\Gamma$ in Section \ref{sec:measurable-functions}  that for any $p$ there exist $q\in \{1,\cdots,n\}$ and 
$k\in \Z^d$ with $|k|\leq N_*+L_m$   such
that 
\[
|\widehat{d}^{(m)}_{p,q}(k)|\geq \frac{\epsilon}{4n^{m}}
\]  
($q$ and $k$ depend on $m$ as well). 
Then using 
$(\ref{nn5})$ and choosing $m_1 = m_1(\epsilon) \gg 1$ we obtain 
\begin{eqnarray}\label{nn6}
|\lambda^{(m)}_{q}e^{2\pi i\langle k,\alpha\rangle}-\mu_p|&\leq &\mbox{cst.}\, \epsilon^{-1}
n^{m}\varepsilon_m\nonumber\\
&\leq &\mbox{cst.} \, \varepsilon_m^{1/2}
\end{eqnarray}
for any $m\ge m_1$. Consider the map
\[f_m: \{1,\cdots,n\} \to \{1,\cdots,n\}\] assigning to each $p\in \{1,\cdots,n\}$ an integer $q\in \{1,\cdots,n\}$ such that \eqref{nn6} holds with some $k\in \Z^d$ of length $|k|\leq N_*+L_m$. 
\begin{Lemma}\label{unique} 
There is $m_0\in\N$ such that the map $f_m$ is bijective for any $m\ge m_0$.  
\end{Lemma}
\begin{proof} It suffices to prove that $f_m$ is injective. Suppose that there are $p_1\neq p_2$ such that $f_m(p_1)=f_m(p_2)=q$. 
Then there exist  $k_1, k_2$ such that
\[
\left\{
\begin{array}{lcrr}
|\lambda^{(m)}_{q}e^{2\pi i\langle k_1,\alpha\rangle}-\mu_{p_1}|\leq
\mbox{cst.}\varepsilon_m^{1/2} ,\\[0.3cm]
|\lambda^{(m)}_{q}e^{
2\pi i\langle k_2,\alpha\rangle }-\mu_{p_2}|\leq \mbox{cst.}\varepsilon_m^{1/2}.
\end{array}
\right.
\]
This implies 
\begin{eqnarray*}
0&=&|\lambda^{(m)}_{q}-\lambda^{(m)}_{q}|\\ &\geq&
|\mu_{p_1}e^{ 2\pi i\langle-k_1+k_2,\alpha\rangle}-\mu_{p_2}| -
\mbox{cst.}\varepsilon_m^{1/2} \\
&\geq & \frac{\chi}{(2 L_m+2N_*+1)^\nu}-\mbox{cst.}\varepsilon_m^{1/2}
\end{eqnarray*}
which  can not be true for  $m\gg 1$ in view of  \eqref{eq:N-m},  \eqref{eq:L-m} and \eqref{rem-est3}. \end{proof}

\begin{Lemma}\label{A} 
There is $m_0\in\N$ such that $A_m\in {\rm NR \, }(N_m,\varepsilon_m^\sigma)$ for  $m\ge m_0$. 
\end{Lemma}
\begin{proof} Suppose that  $A_m$ is
$(N_m,\varepsilon_m^\sigma)-$resonant. Then there  exist $k\in \Z^d$ with
$0<|k|\leq N_m$ and $p,q\in \{1,\cdots\,n\}$ such that
\[|\lambda_{p}e^{2\pi  i\langle k,\alpha\rangle}-\lambda_q|\leq
\varepsilon_m^\sigma.\] If $p=q$, we get
\begin{eqnarray*}
\varepsilon_m^\sigma\geq|e^{2\pi  i\langle k,\alpha\rangle}-1|\geq
\frac{2\pi \gamma^{-1}}{N_m^\tau}
\end{eqnarray*}
which  can not be  true for $m\gg 1$. If $p\neq q$
and  $m$ is large enough, it follows from Lemma \ref{unique} that there exist 
 $k(p),k(q)\in \Z^d$ satisfying $|k(p)|,|k(q)|\leq L_m+N_*$, 
such that
\begin{eqnarray*}
\max\{|\lambda_pe^{2\pi
 i\langle k(p),\alpha\rangle }-\widetilde{\mu}_p|,\ |\lambda_qe^{2\pi
 i\langle k(q),\alpha\rangle }- \widetilde{\mu}_q|\}\leq \mbox{cst.}\,  \varepsilon_m^{1/2},
\end{eqnarray*}
where
\[\widetilde{\mu}_p=\mu_{f_m^{-1}(p)}\neq \mu_{f_m^{-1}(q)}= \widetilde{\mu}_q.\]
Then setting $l=k-k(p)+k(q)$  we get
\begin{eqnarray*}
|\widetilde{\mu}_p e^{2\pi i\langle l,\alpha\rangle }- \widetilde{\mu}_q | &=&
|\widetilde{\mu}_p e^{2\pi  i\langle k-k(p),\alpha\rangle }- \widetilde{\mu}_q e^{2\pi  i\langle -k(q),\alpha\rangle}|\\
&\leq & |\lambda_pe^{2\pi i\langle k,\alpha \rangle}-\lambda_q|+ \mbox{cst.}\, 
\varepsilon_m^{1/2}\\
&\leq & \varepsilon_m^\sigma+\mbox{cst.} \varepsilon_m^{1/2}.
\end{eqnarray*}
On the other hand, the assumption $(\ref{nn7})$ yields
\begin{eqnarray*}
|\widetilde{\mu}_p e^{2\pi  i\langle l,\alpha\rangle}- \widetilde{\mu}_q|
&\geq &
\frac{\chi}{(|l|+1)^\nu}\\
 &\geq  &\frac{\chi}{(N_m+2L_m+2N_*+1)^\nu},
\end{eqnarray*}
hence, 
\begin{eqnarray*}
\frac{\chi}{(N_m+2L_m+N_*+1)^\nu}\leq \varepsilon_m^\sigma+\mbox{cst.}\, 
\varepsilon_m^{1/2}
\end{eqnarray*}
which is not  true for $m\gg 1$. \end{proof}

 Lemma \ref{A} allows one to apply Proposition \ref{KAM}(ii),  which completes  the proof of 
 Lemma \ref{conj-as-large}. \end{proof}

Now we can prove reducibility in Gevrey classes. 
\begin{Lemma}\label{Gev-conj} (Gevrey$-\mathcal{G}^\rho$ Reducibility)
There exist  $R\in {\mathcal G}^{\rho}$ and $\widetilde{A}\in U(n)$ such that
\begin{eqnarray}\label{gev-conj-lim}
 {\rm Ad\, }(R).(\alpha, A e^{G})=(\alpha, \widetilde{A}).
\end{eqnarray}
\end{Lemma}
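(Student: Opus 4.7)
The plan is to set $R = R^{(m_0)} \tilde R$, where $m_0$ is chosen large enough for Lemma \ref{conj-as-large} to apply, and $\tilde R$ is the limit of the tail product $\tilde R^{(m)} := R_{m_0} R_{m_0+1} \cdots R_{m-1}$; the Gevrey regularity of $\tilde R$ will then follow from the Inverse Approximation Lemma (Proposition \ref{inv-app-lem}).

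First I would invoke Lemma \ref{conj-as-large} to fix $m_0$ so that for every $m \ge m_0$ one has $R_m = e^{Y_m}$ with $|Y_m|_{h_m} \le \varepsilon_m^{1-2\sigma}$ and $|A_{m+1} - A_m| \le \varepsilon_m^{1/2}$. Because $\varepsilon_m = \varepsilon_0^{(1+\sigma/2)^m}$ decays super-exponentially, the series $\sum_{j \ge m_0} \varepsilon_j^{1-2\sigma}$ converges, which yields a uniform bound $|\tilde R^{(m)}|_{h_m} \le \prod_{j=m_0}^{m-1} e^{|Y_j|_{h_j}} \le C_R$, and simultaneously makes $\{A_m\}$ Cauchy in $U(n)$ with limit $\widetilde A$. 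Telescoping then gives
\[
|\tilde R^{(m+1)} - \tilde R^{(m)}|_{h_m} \le |\tilde R^{(m)}|_{h_m} \, |R_m - I|_{h_m} \le 2 C_R \, \varepsilon_m^{1-2\sigma}.
\]
Substituting the explicit formula $\varepsilon_m = \exp(-C_1 (cLh_m)^{-1/(\rho-1)})$ from \eqref{eps-cond1}, with $C_1 := [16(1+K_*/\sigma)]^{-1}$, recasts the right-hand side as $2C_R \exp(-(L'h_m)^{-1/(\rho-1)})$ for a suitable $L' = L'(\rho, L, \sigma, K_*)$. This is exactly the hypothesis of Proposition \ref{inv-app-lem} applied to the shifted sequence $(\tilde R^{(m_0 + j)})_{j \ge 0}$ on the strips $h_{m_0 + j}$ with closed target $\Omega = U(n)$, so $\tilde R^{(m)}$ converges to some $\tilde R \in \mathcal{G}^\rho_{c_0 L'}(\T^d, U(n))$.

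Setting $R := R^{(m_0)} \tilde R$ gives an element of $\mathcal{G}^\rho(\T^d, U(n))$, since $R^{(m_0)}$ is analytic on $\T^d_{h_{m_0}}$ and Gevrey classes are stable under multiplication by analytic functions. To conclude, I would pass to the limit in the identity ${\rm Ad\, }(R^{(m)}).(\alpha, Ae^{G_{m-1}}) = (\alpha, A_m e^{F_m})$ from Lemma \ref{anal-appro-gev}, using uniform convergence on $\T^d$ of $R^{(m)} \to R$ and of $G_{m-1} \to G$ (Proposition \ref{app-lem}), together with $A_m \to \widetilde A$ and $|F_m|_{h_m} \le \varepsilon_m \to 0$. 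The decisive obstacle is that the general KAM estimate $|R_m|_{h_{m+1}} \le \varepsilon_m^{-K_*}$ of Proposition \ref{KAM}(i) is far too weak to make the product $\tilde R^{(m)}$ converge, let alone converge in a Gevrey class; the entire argument therefore hinges on the non-resonance regime (ii), which Lemma \ref{conj-as-large} activates by extracting non-resonance of $A_m$ from the measurable data through the containment $R^{(m)} \in \Pi(L_m, 1/n^m, \epsilon/(4n^m))$ in \eqref{eq:R} and the non-degeneracy $\lceil B^\ast \rfloor \ge \epsilon$. Once the improved bound $|Y_m|_{h_m} \le \varepsilon_m^{1-2\sigma}$ is secured, matching its decay to the Gevrey profile $\exp(-(L'h_m)^{-1/(\rho-1)})$ is a direct consequence of the calibration of $\varepsilon_0$ in \eqref{eps-cond1}, and no loss of Gevrey regularity occurs.
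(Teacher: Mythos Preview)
Your proposal is correct and follows essentially the same approach as the paper: invoke Lemma \ref{conj-as-large} to obtain the improved bound $|Y_m|_{h_m}\le \varepsilon_m^{1-2\sigma}$, telescope to get Cauchy-type estimates on successive conjugacies compatible with the Gevrey profile via \eqref{eps-cond1}, apply the Inverse Approximation Lemma to obtain convergence in $\mathcal{G}^\rho$, and pass to the limit using $A_m\to\widetilde A$, $G_m\to G$, $F_m\to 0$. The only cosmetic difference is that the paper shows directly that the full sequence $R^{(m)}$ is Cauchy in $\mathcal{G}^\rho$ (absorbing the finite constant $|R^{(m_*)}|_{h_{m_*+1}}$), whereas you factor $R=R^{(m_0)}\tilde R$ and apply Proposition \ref{inv-app-lem} only to the tail product $\tilde R^{(m)}$; both routes are equivalent.
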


\begin{proof} Recall from Lemma \ref{GEV-cond} and Lemma \ref{anal-appro-gev} that
\[
\begin{array}{lcrr}
 {\rm Ad\, }(R^{(m)}).(\alpha, Ae^{G_{m-1}})=(\alpha,
 A_{m}e^{F_{m}}), \\[0.3cm]
|G_m-G_{m-1}|_{h_m}\leq \varepsilon_{m+1}^{4(1+K_*/\sigma)} \quad \mbox{and} \quad
|F_m-F_{m+1}|_{h_{m+1}}\leq 2\varepsilon_m .
\end{array}
\]
Moreover, by Lemma \ref{conj-as-large}, there exists $m_*$, such that for any
$m\geq m_*$ the following estimate holds true
\begin{eqnarray*}
\label{estimate-R} |R^{(m+1)}-R^{(m)}|_{h_{m+1}}&=&
|R^{(m)}R_m-R^{(m)}|_{h_{m+1}} \nonumber \\
&=&|R^{(m)}(R_m-I)|_{h_{m+1}} \nonumber \\
&=&|R^{(m)}(e^{Y_m}-I)|_{h_{m+1}} \nonumber \\
&\leq& 2 \varepsilon_m^{1-2\sigma}|R^{(m)}|_{h_{m+1}}.
\end{eqnarray*}
This implies 
\begin{eqnarray*}
|R^{(m)}|_{h_{m+1}} \leq 2 |R^{(m_*)}|_{h_{m_*+1}},
\end{eqnarray*}
and
\begin{eqnarray*}
 |R^{(m+1)}-R^{(m)}|_{h_{m+1}}
\leq 4 \varepsilon_m^{1-2\sigma} |R^{(m_*)}|_{h_{m_*+1}}.
\end{eqnarray*}
Taking  $m_{*}\gg 1$ we get for any  $m\geq m_{*}$ the estimate
\begin{eqnarray*}
 |R^{(m+1)}-R^{(m)}|_{h_{m+1}}
\leq \varepsilon_m^{1/2} .
\end{eqnarray*}
hence, for any $m\geq m_{*}$ we have
\[
\zeta_m :=\max\{|G_m-G_{m-1}|_{h_{m+1}},\ |F_m-F_{m+1}|_{h_{m+1}},\
|R^{(m+1)}-R^{(m)}|_{h_{m+1}}\}\leq\varepsilon_m^{1/2}.
\]
Recall that 
\[
h_m=h_0\left(1+\frac{\sigma}{2}\right)^{-m(\rho-1)} \quad \mbox{and} \quad 
\quad \varepsilon_m=\exp\left\{-\frac{1}{16(K_*/\sigma +1)}(cLh_m)^{-\frac{1}{\rho-1}}\right\}. 
\]
Then for any $m$  sufficiently large   we get
\begin{eqnarray}
\label{estimate-zeta}
&&\exp\left\{-\left((32(K_*/\sigma +1))^{\rho-1}cLh_m\right)^{-\frac{1}{\rho-1}}\right\} \\
&&= \exp\left\{-\frac{1}{32(K_*/\sigma +1)}(cLh_m)^{-\frac{1}{\rho-1}}\right\} \nonumber \\
&&= \varepsilon_m^{1/2}
\geq 
 \zeta_m.
\end{eqnarray}
By the inverse approximation lemma (Proposition \ref{inv-app-lem}) and \eqref{estimate-zeta}  choosing  
\[
\widetilde{L}\geq c_0(\rho,d)(32(K_*/\sigma +1))^{\rho-1}cL
\]
we obtain that
\begin{itemize}
\item $R^{(m)}$  converges in ${\mathcal G}_{\widetilde{L}}^\rho$ to some  $R\in {\mathcal G}_{\widetilde{L}}^\rho (\T^d, U(n))$,
\item  $G_m$ converges in ${\mathcal G}_{\widetilde{L}}$ and it converges  to $G$ in $C^0$, so $G_m$ converges to $G$ in  ${\mathcal G}_{\widetilde{L}}^\rho$,
\item  $F_m$  converges in ${\mathcal G}_{\widetilde{L}}^\rho$, and $F_m$ converges to $0$ in  $C^0$, hence,  $F_m$ converges to $0$ in  ${\mathcal G}_{\widetilde{L}}^\rho$.
\end{itemize}
To finish the proof, one  has just to let  $m \to \infty$ in
 \[{\rm Ad\, }(R^{(m)}).(\alpha, Ae^{G_{m-1}})=(\alpha, A_{m}e^{F_{m}}) \]
with $A_m$ converging to some $\widetilde{A}\in U(n)$ by Lemma \ref{conj-as-large}.  \end{proof}

\subsection{Proof of Theorem \ref{Local}}

We are going to complete the proof of  Theorem \ref{Local}. By Lemma \ref{Gev-conj}, there exist  $R\in {\mathcal G}_{\widetilde{L}}^{\rho}$ and $\widetilde{A}\in U(n)$ such that
\begin{eqnarray*}
 {\rm Ad\, }(R).(\alpha, A e^{G})=(\alpha, \widetilde{A}).
\end{eqnarray*}
Moreover, there exist by assumption a
measurable $B: \T^d\rightarrow U(n)$, $A\in U(n)$ and
$C\in \Sigma(\alpha)$ such that
\begin{eqnarray*}
{\rm Ad\, }(B). (\alpha, Ae^{G})=(\alpha, C).
\end{eqnarray*} 
Setting $V=RB^{-1}$, then $B=V^{-1}R$. $R$ is analytic, to prove that $B$ is almost surely Gevrey$-{\mathcal G}^{\rho}$, we just need to prove that $V$ is so. In fact, we can prove that there exist  $U_1, U_2, U_3\in U(n)$ and $k^{(1)},\cdots,k^{(n)}\in \Z^d$ such
that 
\begin{eqnarray}\label{const-conj}
V(\theta)=U_1 U_3^* \exp\{2\pi i  {\rm diag\,
}(\langle k^{(1)},\theta\rangle,\cdots,\langle k^{(n)},\theta\rangle)\}  U_2
\end{eqnarray}
for a.e. $\theta\in \T^d$. 
Then for a.e. $\theta\in \T^d$ we have 
\begin{eqnarray*}
\widetilde{B}(\theta):&=& V(\theta)^{-1} R(\theta)\\
&=& U_1U_3^*\exp\{2\pi i  {\rm diag\,
}(\langle k^{(1)},\theta\rangle,\cdots,\langle k^{(n)},\theta\rangle)\} U_2^*R(\theta)
= B(\theta).
\end{eqnarray*} 
Obviously  $\widetilde{B}$ is in $ {\mathcal G}_{\widetilde{L}}^{\rho}$ and we obtain the desired result.

To prove (\ref{const-conj}), let us consider the conjugation
\begin{eqnarray*}
{\rm Ad\, }(V). (\alpha, C)=(\alpha, \widetilde{A}).
\end{eqnarray*} 
We write \[C=U_1 {\rm diag\, }(\mu_1,\cdots,\mu_n)U_1^* \quad \mbox{and}\quad 
\widetilde A=U_2 {\rm diag\, }(\widetilde{\mu}_1\cdots,  \widetilde{\mu}_n)U_2^*\]
where 
$U_1,U_2\in U(n)$ and we get
\begin{align*}
\widetilde{V}(\cdot +\alpha) {\rm diag\, }(\mu_1,\cdots,\mu_n)
={\rm diag\, }(\widetilde{\mu}_1,\cdots,\widetilde{\mu}_n)\widetilde{V},
\end{align*}
where $\widetilde{V}:=U_2^*V^{-1}U_1:\T^d\to U(n)$ is measurable. 
Then  for any $p,q\in\{1,\cdots,n\}$ and $k\in
\mathbb{Z}^d$ we obtain
\begin{align*}
(e^{2 \pi i \langle k,\alpha\rangle }\mu_q-\widetilde{\mu}_p)
\widehat{\widetilde{v}}_{p,q}(k)=0.
\end{align*}
where $\widehat{\widetilde{v}}_{p,q}(k)$ denotes the $(p,q)$ entry of $\widehat{\widetilde{V}}(k)$. 
Since $\lceil \widetilde{V} \rfloor >0$  there exist $k^{(1)},\cdots,k^{(n)}
\in \mathbb{Z}^d$, such that
\[
\widetilde{\mu}_j=\mu_j e^{2 \pi i \langle k^{(j)},\alpha\rangle},\quad j=1,\ldots, n.
\]
Setting $W(\theta)={\rm diag\, } (
e^{2\pi i \langle k^{(1)},\theta\rangle},\cdots,e^{2\pi i\langle k^{(n)},\theta\rangle})\widetilde{V}(\theta)$
we get
\begin{align*}
W(\cdot +\alpha) {\rm diag\, }(\mu_1,\cdots,\mu_n)= {\rm diag\, }(\mu_1
,\cdots,\mu_n)W.
\end{align*}
Then for all $p,q\in \{1,\cdots,n\}$ and $k\in \mathbb{Z}^d$ we have 
\begin{align*}
(e^{2 \pi i \langle k,\alpha \rangle}\mu_q-\mu_p)\widehat{w}_{p,q}(k)=0.
\end{align*}
where $\widehat{w}_{p,q}(k)$ denotes the $(p,q)$ entry of $\widehat{W}(k)$. The relation $C\in\Sigma(\alpha)$ implies that 
\begin{align*}
\forall\, k\neq0, \quad e^{2 \pi i \langle k,\alpha\rangle }\mu_q-\mu_p\neq 0
\end{align*}
and we obtain   that $\widehat{W}(k)=0$ for all $0\neq
k\in\mathbb{Z}^d$. Thus there exists $U_3\in U(n)$, such that
\[W(\theta)=U_3\]
 for a.e. $\theta\in \mathbb{T}^d$, which implies (\ref{const-conj}). 

\section{Global Setting}\label{Global}

In this section we will prove the global rigidity
result (Theorem \ref{Gloabl}) using Theorem \ref{Local} and adapting the renormalization scheme of a
$\Z^2-$action developed by  Krikorian \cite{AK,Kr1,HY2} to the case of Gevrey classes.

\subsection{$\mathbb{Z}^2-$Action in Gevrey classes}

Given $\rho>1$ and $K>0$ we denote by
$\mathcal{G}_L^\rho([-K,K],U(n))$ the Banach space of all $C^\infty$ functions $A: [-K,K] \rightarrow U(n)$ with finite norm
\[
\|A\|^{\mathcal{G}}_{L,K} := \sup_{r\in \N}\sup_{|\theta|\leq K}(|\partial^{r}A(\theta)|L^{-|r|}r!^{-\rho})<\infty.
\]
Denote by $\mathcal{G}_L^\rho(\R,U(n))$  the functional space
$\bigcap_{K>0}\mathcal{G}_L^\rho([-K,K],U(n))$ equipped by the  projective limit topology (a sequence $A_m \in \mathcal{G}_L^\rho(\R,U(n))$ converges to  $A
\in \mathcal{G}_L^\rho(\R,U(n))$ if it converges in  $\mathcal{G}_L^\rho([-K,K],U(n))$ for any $K>0$)
and set $\mathcal{G}^\rho(\R,U(n)):=\cup_{L>0}\mathcal{G}_L^\rho(\R,U(n))$  equipped by the corresponding inductive limit topology (a sequence $A_m \in \mathcal{G}^\rho(\R,U(n))$ converges to  $A
\in \mathcal{G}^\rho(\R,U(n))$ if  there is $L>0$ such that the the sequence $A_m $ converges to in  $\mathcal{G}_L^\rho(\R,U(n))$).

Denote by  $SW^{\mathcal{G}}_{L,\rho}(\R, U(n))$ the composition group  of all $(\gamma,
A)\in \R\times \mathcal{G}_L^\rho(\R, U(n))$ acting on $\R\times U(n)$ by
\begin{eqnarray*}
(\gamma, A): &&\R\times U(n)\rightarrow \R\times U(n)\\
&& (\theta,v)\mapsto (\theta+\gamma, A(\theta)v).
\end{eqnarray*}
Denote as well  by $\Lambda^{L,\rho}$  the set of all Gevrey-$\mathcal{G}^\rho$ fibered
$\Z^2$-actions. By definition  $\Phi\in \Lambda^{L,\rho}$ if it is  a homomorphism from the additive group $\Z^2$ to the
composition group $SW^{\mathcal{G}}_{L,\rho}(\R, U(n))$,
\begin{eqnarray*}
\Phi: &&\Z^2\rightarrow SW^{\mathcal{G}}_{L,\rho}(\R, U(n))\\
&& (k_1,k_2)\mapsto \Phi(k_1,k_2)=(\gamma_{k_1,k_2}^{\Phi}, A_{k_1,k_2}^{\Phi}).
\end{eqnarray*}
Any $\Z^2$-action $\Phi$ is  completely determined by its values on $(1,0)$ and $(0,1)$ and we write it as follows
\[\Phi=\{\Phi(1,0),\Phi(0,1)\}
=\{(\gamma^{\Phi}_{1,0}, A^{\Phi}_{1,0}),(\gamma^{\Phi}_{0,1},
A^{\Phi}_{0,1})\}.\] We denote by  $\Lambda^\rho$ the union
$\cup_{L>0}\Lambda^{L,\rho}$. To shorten the notations we sometimes skip the index $\rho$ which is fixed.

A   $\Z^2$-action $\Phi$ is  said to be {\em normalized} if
$\Phi(1,0)=(1, I)$. If $\Phi$ is
normalized then $\Phi(0,1)=(\alpha,A)$ can be  viewed as a cocycle
in $SW^{\mathcal{G}}(\T^1, U(n))$, since $A$ is automatically
$\Z-$periodic.  Conversely, to any  $(\alpha,A)\in SW^{\mathcal{G}}(\T^1, U(n))$ one can associate a normalized $\Z^2$-action
$\Phi=\{(1,I),(\alpha,A)\}$.\\

A $\Z^2$-action $\Phi \in \Lambda^{L,\rho}$  is said to be
conjugated to a $\Z^2$-action  $\widetilde{\Phi}$ by
 $B\in \mathcal{G}_{L}^{\rho}(\R, U(n))$ if for any $(k_1,k_2)\in\Z^2$
\begin{eqnarray*}
 {\rm Ad\, }(B).\Phi(k_1,k_2)&:= &(0,B)^{-1} \circ \Phi(k_1,k_2) \circ (0,B)\\
&=&(0,B^{-1}) \circ \Phi(k_1,k_2) \circ (0,B) \\
&=&\widetilde{\Phi}(k_1,k_2).
\end{eqnarray*}
 The following lemma states that any Gevrey $\Z^2$-actions can  be conjugated to a normalized
one in the same Gevrey class.

\begin{Lemma}\label{norm} (Normalization).
Any  $\Z^2$-action of the form $\Phi=\{(1,C), (\gamma, D)\}\in
\Lambda^{L,\rho}$  can be conjugated to a normalized one by some
$P\in \mathcal{G}_{cL}^\rho(\R, U(n))$, where $c\ge 1$. Moreover,  if 
\begin{equation} \label{eq:C}
\sup_{|\theta|\le 3}\|C(\theta)-I\| <1/3
\end{equation}
then one can choose $P$
so that
\begin{eqnarray}\label{gev-norm-est}
\|P-I\|^{\mathcal{G}}_{cL,2}\leq \mbox{cst.} \|C-I\|^{\mathcal{G}}_{L,3}.
\end{eqnarray}
\end{Lemma}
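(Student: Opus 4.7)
The plan is to reduce the normalization of $\Phi=\{(1,C),(\gamma,D)\}$ to solving the discrete cohomological equation
\[
P(\theta+1)=C(\theta)P(\theta)\quad \text{for all } \theta\in\R,
\]
for a Gevrey map $P:\R\to U(n)$. Indeed, a direct computation of $(0,P)^{-1}\circ(1,C)\circ(0,P)$ gives $(1,P(\cdot+1)^{-1}C(\cdot)P(\cdot))$, which equals $(1,I)$ precisely when the above holds; the second generator then becomes $(\gamma,P(\cdot+\gamma)^{-1}D(\cdot)P(\cdot))$, automatically Gevrey by the Banach-algebra property of $\mathcal{G}_{cL}^\rho$. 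Thus the proof reduces to constructing $P$ and to estimating it under \eqref{eq:C}.

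For the existence part, I would exploit that $\mathcal{G}^\rho$ with $\rho>1$ is non-quasi-analytic and hence admits Gevrey cutoffs and Gevrey Whitney-type extensions. Prescribe boundary jets at $0$ and $1$ by $P^{(k)}(0)=\delta_{k,0}\,I$ and $P^{(k)}(1):=\bigl(C(\cdot-1)P(\cdot-1)\bigr)^{(k)}|_{\theta=1}$, which, given the boundary values at $0$, reduces to $P^{(k)}(1)=C^{(k)}(0)$. A Gevrey Whitney extension then provides a function $P\in \mathcal{G}^\rho([0,1],U(n))$ realizing these jets, and the recursion $P(\theta+k)=C(\theta+k-1)\cdots C(\theta)P(\theta)$ extends $P$ to all of $\R$; the derivative matching at every integer is automatic. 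For the quantitative part (assuming \eqref{eq:C}), I would give $P$ explicitly: pick once and for all a Gevrey-$\rho$ function $\psi:\R\to[0,1]$ with $\psi\equiv 0$ on $(-\infty,0]$ and $\psi\equiv 1$ on $[1,\infty)$, flat at both endpoints, and set
\[
P(\theta):=\exp\bigl(\psi(\theta)\,\log C(\theta-1)\bigr),\quad \theta\in[0,1],
\]
extended to $\R$ by the recursion above. The principal-branch logarithm is well defined and Gevrey on $[-1,0]$ thanks to \eqref{eq:C}, and the flatness of $\psi$ at $0$ and $1$ ensures the boundary jets above. Hence this $P$ is Gevrey on $\R$.

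To obtain \eqref{gev-norm-est}, note that on $[-2,2]$ the extended $P$ is a finite product of shifted copies of $C$ and the above interpolation, so it suffices to invoke: (i) the Banach-algebra property of $\mathcal{G}_{L}^\rho$, which is closed under pointwise multiplication at the cost of inflating the Gevrey constant $L\mapsto c_1 L$; (ii) the closedness of $\mathcal{G}_L^\rho$ under composition with fixed Gevrey functions (the cutoff $\psi$ and, on the ball $\{\|X\|<\log 2\}\subset u(n)$, the analytic maps $\exp$ and $\log$), which again only inflates the constant by a factor $c_2$; and (iii) the elementary quantitative bounds $\|\log C\|_{L,3}^{\mathcal{G}}\le 2\|C-I\|_{L,3}^{\mathcal{G}}$ and $\|\exp X-I\|\le 2\|X\|$ for small $X$, which are consequences of \eqref{eq:C}. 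Composing these estimates yields \eqref{gev-norm-est} with $c:=c_1 c_2$. The main obstacle is the careful tracking of Gevrey-constant inflation through the algebra, composition, and $\exp/\log$ steps; the Whitney extension used in the existence part is standard for non-quasi-analytic Gevrey classes but also requires some bookkeeping, which however does not need to be sharp since no estimate is claimed in that part of the statement.
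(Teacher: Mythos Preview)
Your overall strategy---reduce to the cohomological equation $P(\theta+1)=C(\theta)P(\theta)$, build $P$ on a fundamental domain with matching jets at the endpoints, extend by recursion---is exactly the paper's. Your explicit formula $P(\theta)=\exp\bigl(\psi(\theta)\log C(\theta-1)\bigr)$ for the quantitative part is correct and in fact slightly leaner than the paper's (which uses two exponential factors), since under \eqref{eq:C} the principal logarithm of $C$ is globally available on $[-1,0]$.

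The gap is in your general existence argument. A Gevrey Whitney extension with the prescribed jets $P^{(k)}(0)=\delta_{k,0}I$, $P^{(k)}(1)=C^{(k)}(0)$ produces a map into $M_n$, not into $U(n)$; you assert $P\in\mathcal{G}^\rho([0,1],U(n))$ without justification, and there is no reason the extension stays unitary (or even invertible) in the interior. Projecting back to $U(n)$ by polar decomposition would require invertibility and would in general destroy the jet conditions at $1$. Equally, your explicit formula cannot be reused here: $\log C(\theta-1)$ need not exist on all of $[-1,0]$ when $C(0)$ is far from $I$.

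The paper's fix is precisely designed to handle this. One chooses any $X_0\in u(n)$ with $e^{X_0}=C(0)$; then $e^{-X_0}C(\theta-1)$ is close to $I$ for $\theta$ near $1$, so $Y(\theta)=\log\bigl(e^{-X_0}C(\theta-1)\bigr)\in u(n)$ is defined on some $[1-\delta,1+\delta]$. A Gevrey reparametrization $f_\delta$ keeps the argument of $Y$ inside this interval while $\theta$ ranges over $[0,1]$, and one sets
\[
P(\theta)=e^{\,b_\delta(\theta)X_0}\,e^{\,b_\delta(\theta)\,Y(f_\delta(\theta))},
\]
which is manifestly $U(n)$-valued. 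This two-exponential construction with the local logarithm and reparametrization is the missing idea in your general case; once you have it, your own simpler formula becomes the special instance $X_0=0$, $f_\delta=\mathrm{id}$, available only under \eqref{eq:C}.
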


\begin{proof}
Choose  $X_0\in u(n)$ such that $C(0) = e^{X_0}$, fix $0<\delta<3/5$ so that 
\begin{equation} \label{eq:C0}
\|e^{-X_0}C(\theta-1) - I\|<2/3 \quad  \mbox{for} \quad \theta\in [1-\delta,1+\delta]
\end{equation}
and define
 $Y\in \mathcal{G}_L^\rho([1-\delta,1+\delta], u(n))$ by
\begin{equation}
\label{eq:Y}
Y(\theta):=\log\left\{e^{-X_0}C(\theta-1)\right\} \quad  \mbox{for}\quad  \theta\in
[1-\delta,1+\delta].
\end{equation}
We have 
\begin{equation}
\label{eq:Y-definition}
e^{X_0}e^{Y(\theta)}=C(\theta-1) \quad  \mbox{for}\quad \theta\in
[1-\delta,1+\delta].
\end{equation}
Since $\log t= (1-t)h(t)$, where $h$ is analytic in the unit ball $\{|t|<1\}$, we get the following inequality by estimating  the composition of Gevrey functions (see \cite{P}, Proposition A.3) 
\begin{eqnarray}\label{y-gev-est}
\sup_{r\in \N} \, \sup_{|\theta-1|\leq \delta}\,   (|\partial^r Y(\theta)|L^{-|r|}r!^{-\rho})&\leq &
\mbox{cst.}\,  \|C(0)^{-1}C(\cdot)-I\|^{\mathcal{G}}_{cL,\delta} \nonumber\\
&\leq &
\mbox{cst.}\,  \|C-I\|^{\mathcal{G}}_{cL,\delta}  
\end{eqnarray}
where $c\ge 1$ is a constant independent of the constant $L\ge 1$. 

Choosing $c\gg 1$ there is   $b$  in 
$\mathcal{G}_{c}^\rho \subset \mathcal{G}_{cL}^\rho$ such that $0\le b \le 1$, $b(\theta)=0$ for $\theta\in (-\infty,1/3]$, $b(\theta)=1$ for $\theta\in [2/3,+\infty]$ and 
$b$ is strictly increasing  on $[1/3,2/3]$. Set 
\[
b_\delta(\theta)= b\left(\frac{\theta-(1-\delta)}{\delta}\right) \quad \mbox{and}\quad   f_\delta(\theta)=(1-\delta)+ (\theta-(1-\delta)) b_\delta(\theta)  
\]
and define
\begin{equation}
\label{eq:P}
P(\theta)=e^{b_\delta(\theta)X_0}e^{b_\delta(\theta)Y(f_\delta(\theta ))},  \quad \theta\in [-\delta,1+\delta].
\end{equation}
The functions $b_\delta$ and $f_\delta$ have the 
 following properties
\begin{itemize}
\item  $b_\delta(\theta)=1$ and $f_\delta(\theta)=\theta$ for $\theta\in [1-\delta/3,1+\delta]$,
\item $1-\delta\le f_\delta(\theta)\le\theta$ and $f_\delta$ is increasing on  $[1-2\delta/3,1-\delta/3]$,
\item $b_\delta(\theta)=0$ and $f_\delta(\theta)=1-\delta$ for $\theta\in
[-\delta,1-2\delta/3]$.
\end{itemize}
The theorem about the composition of Gevrey functions implies that $P\in \mathcal{G}_{cL}^\rho([1-\delta,1+\delta], U(n))$, where $c\ge 1$. Moreover, 
$P(\theta)= e^{X_0}e^{Y(\theta)}=C(\theta-1)$ for $\theta\in [1-\delta/3, 1+\delta]$ and $P(\theta)= I$ for $\theta\in [-\delta,\delta]\subset [-\delta, 1-2\delta/3]$ since $0<\delta<3/5$, and  we obtain 
\begin{equation}\label{eq:B-C}
P(\theta+1)^{-1}C(\theta)P(\theta)=I \quad \mbox{for} \quad |\theta|< \delta . 
\end{equation}
Now we
extend $P$ in $\R$ by
\begin{equation}\label{eq:P-extension}
P(\theta)= 
\left\{
\begin{array}{lcrr}
C(\theta-1)P(\theta-1), \quad   &\theta&\in
[1,+\infty),\\
C(\theta)^{-1}P(\theta+1),  \quad   &\theta&\in
(-\infty,1].
\end{array}
\right.
\end{equation}
By \eqref{eq:B-C} the function $P$ is well defined. Moreover,  
$P\in \mathcal{G}_{cL}^\rho(\R, U(n))$ with some $c\ge 1$ independent of $L\ge 1$ and  it satisfies the relation
\[
\forall\,  \theta \in \R, \quad P(\theta+1)^{-1}C(\theta)P(\theta)=I.
\]
It remains to prove \eqref{gev-norm-est} if \eqref{eq:C} holds. We can choose now $X_0=\log C(0)$.  Moreover, the inequality \eqref{eq:C0} holds for $\theta\in [-2,2]$, $Y$ is well defined by \eqref{eq:Y-definition} in $[-2,2]$ and as in \eqref{y-gev-est} we get
\[
\|X_0\| + \|Y\|^{\mathcal{G}}_{dL,2}\leq \mbox{cst.} \|C-I\|^{\mathcal{G}}_{L,3}
\]
where $d\ge 1$. Choose $\delta=1/2$ in \eqref{eq:P}.
 Writing   $e^X= I + Xg(X)$, where $g$ is an entire function and using \eqref{eq:P} and the theorems about the multiplication and the  composition of Gevrey functions (\cite{P}, Proposition A.3)  we obtain the following estimate  in $\mathcal{G}_{cL}^\rho([-1/2,3/2], M_n)$
\[
\begin{array}{lcrr}
\|P-I\|^{\mathcal{G}}_{cL,2} = \|e^{-b_\delta X_0}- e^{b_\delta Y\circ f_\delta}\|^{\mathcal{G}}_{cL,2} \\[0.3cm]
\le \|b_\delta X_0 g(b_\delta X_0)\|^{\mathcal{G}}_{cL,2} + \|(b_\delta Y\circ f_\delta) g(b_\delta Y\circ f_\delta)\|^{\mathcal{G}}_{cL,2} \\[0.3cm]
\le \mbox{cst.}(\|X_0\| + \|Y\|^{\mathcal{G}}_{dL,2})\leq \mbox{cst.} \|C-I\|^{\mathcal{G}}_{L,3}
\end{array}
\]
with $\delta=1/2$ and a suitable $c>d$. Using \eqref{eq:P-extension} we obtain the estimate \eqref{gev-norm-est} in $\mathcal{G}_{cL}^\rho([-2,2], M_n)$. 
 \end{proof}

When $A^\Phi_{k_1,k_2}$($(k_1,k_2)\in \Z^2$) are all constants, we say that
$\Phi$ is  {\em constant}. The following simple lemma provides
a normalization of  constants.

\begin{Lemma}\label{const}
Any  constant $\{(1,C),(\alpha,D)\}$ can be conjugated to a
normalized constant.
\end{Lemma}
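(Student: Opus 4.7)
The plan is to exploit the commutation relation forced by the $\Z^2$-action structure. Since $\Phi$ is a homomorphism from the abelian group $\Z^2$, we must have $\Phi(1,0)\circ\Phi(0,1)=\Phi(0,1)\circ\Phi(1,0)$, and a direct computation in the fibre shows this is equivalent to $CD=DC$. Two commuting unitary matrices are simultaneously unitarily diagonalizable, so there exist $U\in U(n)$ and diagonal unitary matrices $\Lambda_C,\Lambda_D$ such that $C=U\Lambda_C U^{\ast}$ and $D=U\Lambda_D U^{\ast}$.

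Next I would pick a convenient skew-Hermitian logarithm of $C$ that commutes with $D$. Choose a diagonal skew-Hermitian matrix $\mathcal{L}\in u(n)$ with $e^{\mathcal{L}}=\Lambda_C$ (one just picks branches of $\log$ on each diagonal entry of $\Lambda_C$) and set $X:=U\mathcal{L} U^{\ast}\in u(n)$. By construction $e^{X}=C$, and since $\mathcal{L}$ and $\Lambda_D$ are both diagonal, $XD=DX$. Define
\[
B(\theta):=e^{\theta X},\qquad \theta\in\R.
\]
Then $B$ is real-analytic from $\R$ to $U(n)$, so in particular $B\in\mathcal{G}^\rho_L(\R,U(n))$ for any $\rho>1$ and any $L$ large enough (analytic functions sit in every Gevrey class).

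Finally I would check that ${\rm Ad\,}(B).\Phi$ is a normalized constant action. For the first generator,
\[
B(\theta+1)^{-1}CB(\theta)=e^{-(\theta+1)X}e^{X}e^{\theta X}=I,
\]
so ${\rm Ad\,}(B).(1,C)=(1,I)$. For the second generator, using $[X,D]=0$,
\[
B(\theta+\alpha)^{-1}DB(\theta)=e^{-(\theta+\alpha)X}De^{\theta X}=De^{-\alpha X},
\]
which is independent of $\theta$. Hence ${\rm Ad\,}(B).\Phi=\{(1,I),(\alpha,De^{-\alpha X})\}$ is a normalized constant $\Z^2$-action, as required.

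There is essentially no real obstacle here: the only subtlety is choosing the logarithm of $C$ so that it commutes with $D$, and that is handled automatically by simultaneous diagonalization. The rest is a direct verification on the two generators of $\Z^2$.
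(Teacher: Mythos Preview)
Your proof is correct and follows essentially the same approach as the paper: both use the commutation $CD=DC$ to find $X\in u(n)$ with $e^X=C$ and $[X,D]=0$, then conjugate by $B(\theta)=e^{\theta X}$ to obtain $\{(1,I),(\alpha,De^{-\alpha X})\}$. The only cosmetic difference is that the paper invokes the abelian Lie subgroup generated by $C$ and $D$ to produce the commuting logarithm, whereas you make this explicit via simultaneous unitary diagonalization.
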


\begin{proof}
As $C$ and $D$ commute, they generate an abelian
Lie subgroup $\mathcal{T}$ of $U(n)$, hence, one can choose $X_0$ in the
Lie algebra of $\mathcal{T}$ satisfying $C=e^{X_0}$ and $X_0D=DX_0$.
Now $\{(1,e^{X_0}),(\alpha,D)\}$ can be conjugated to $\{(1,I),(\alpha,
De^{-\alpha X_0})\}$ by $B(\theta)=e^{\theta X_0}$. \end{proof}

A $\Z^2$-action $\Phi\in \Lambda$ is said to be {\em reducible}
 if it  can be conjugated to a constant by some
$B\in \mathcal{G}^{\rho}(\R,U(n))$. From Lemma \ref{const} one obtains the following

\begin{Lemma}\label{red-red}
Let $(\alpha, A)$ be a cocycle. Then the    $\Z^2$-action  $\{(1,I),(\alpha,A)\}$  is  reducible if and only if $(\alpha, A)$
 is  reducible as a cocycle.  
\end{Lemma}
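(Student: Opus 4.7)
\bigskip
\noindent \textbf{Proof proposal for Lemma \ref{red-red}.}

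The plan is to prove the two directions separately and to use Lemma \ref{const} as a bridge between arbitrary constant $\Z^2$-actions and normalized ones. The easy direction is $(\Leftarrow)$: if $(\alpha,A)$ is reducible as a cocycle via $B\in \mathcal{G}^\rho(\T^1,U(n))$, i.e.\ $B(\theta+\alpha)^{-1}A(\theta)B(\theta)=C$ for some constant $C\in U(n)$, then since $B$ is $\Z$-periodic one has $B(\theta+1)^{-1}\, I\, B(\theta)=I$. Therefore ${\rm Ad\,}(B)$ sends $(1,I)$ to $(1,I)$ and $(\alpha,A)$ to $(\alpha,C)$, so $\{(1,I),(\alpha,A)\}$ is conjugated to the (already normalized) constant $\Z^2$-action $\{(1,I),(\alpha,C)\}$.

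For $(\Rightarrow)$, assume the $\Z^2$-action $\Phi=\{(1,I),(\alpha,A)\}$ is reducible, so there exists $B\in \mathcal{G}^\rho(\R,U(n))$ conjugating $\Phi$ to a constant $\Z^2$-action $\{(1,C),(\alpha,D)\}$ for some constants $C,D\in U(n)$. Unwinding the composition law $(\gamma_1,A_1)\circ(\gamma_2,A_2)=(\gamma_1+\gamma_2,A_1(\cdot+\gamma_2)A_2(\cdot))$ yields the two identities
\[
B(\theta+1)^{-1}B(\theta)=C, \qquad B(\theta+\alpha)^{-1}A(\theta)B(\theta)=D,
\]
for all $\theta\in\R$. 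The first identity shows that $B$ fails to be $\Z$-periodic precisely by the twist $C$, which is exactly the obstruction one must cancel.

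To cancel this twist I would apply Lemma \ref{const} to the constant $\Z^2$-action $\{(1,C),(\alpha,D)\}$, producing $X_0\in u(n)$ with $C=e^{X_0}$ and $[X_0,D]=0$, so that $B_0(\theta):=e^{\theta X_0}$ conjugates $\{(1,C),(\alpha,D)\}$ further to the normalized constant action $\{(1,I),(\alpha,De^{-\alpha X_0})\}$. Composing the two conjugations gives $\widetilde B(\theta):=B(\theta)e^{\theta X_0}\in \mathcal{G}^\rho(\R,U(n))$, and a direct check
\[
\widetilde B(\theta+1)=B(\theta+1)e^{(\theta+1)X_0}=B(\theta)C^{-1}e^{X_0}e^{\theta X_0}=B(\theta)e^{\theta X_0}=\widetilde B(\theta)
\]
shows that $\widetilde B$ is $\Z$-periodic. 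Hence $\widetilde B\in \mathcal{G}^\rho(\T^1,U(n))$ and, by construction, it conjugates $(\alpha,A)$ to the constant matrix $De^{-\alpha X_0}$, so $(\alpha,A)$ is reducible as a cocycle.

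The only nontrivial point is the step where one passes from an arbitrary constant $\Z^2$-action to a normalized one; this is exactly where Lemma \ref{const} (which uses the commutativity $CD=DC$ to find a common logarithm) is indispensable. Everything else is a routine unwrapping of the composition law and verification that Gevrey regularity is preserved under multiplication by the analytic map $\theta\mapsto e^{\theta X_0}$.
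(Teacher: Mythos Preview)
Your proof is correct and follows exactly the approach the paper intends: the paper states Lemma \ref{red-red} as an immediate consequence of Lemma \ref{const} without further detail, and your argument spells out precisely this deduction --- using the periodic $B$ directly for $(\Leftarrow)$, and for $(\Rightarrow)$ composing the given conjugation with the normalizing map $\theta\mapsto e^{\theta X_0}$ from Lemma \ref{const} to restore $\Z$-periodicity.
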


\subsection{Renormalization}

We recall from \cite{AK,Kr1}  the following operations on  $\Lambda$. 
\begin{itemize}
\item[a)] For any  $\theta\in \R$  a translation $T_{\theta}$ is defined by
\[
T_{\theta} (\Phi) (k_1,k_2)= (\gamma_{k_1,k_2}^\Phi,
 A_{k_1,k_2}^\Phi(\cdot+\theta)).
 \]
\item[b)] For any $\lambda \neq 0$ denote by
  $M_\lambda$ the rescaling  
 \[
 M_\lambda (\Phi) (k_1,k_2) =
(\lambda^{-1}\gamma_{k_1,k_2}^\Phi,
 A_{k_1,k_2}^\Phi(\lambda\cdot)).
 \]
\item[c)] For any $U\in GL(2,\Z)$
 we denote by   $N_U$ the base change  
 \[
 N_U (\Phi)(k_1,k_2) =
 \Phi((k_1,k_2)(U^T)^{-1}).
 \]
\end{itemize}
Reducibility is
invariant under conjugation, translation, rescaling and  base change. \\

Given an   irrational $\alpha \in (0,1)$ we consider the continued fractional
expansion
\begin{eqnarray*}
\alpha= \frac{1}{\displaystyle a_1+ \frac{\displaystyle
1}{\displaystyle a_2+ \cdots}} ,
\end{eqnarray*}
We set $\alpha_0=\alpha$, and
\begin{eqnarray*}
\alpha_m= \frac{1}{\displaystyle a_{m+1}+ \frac{\displaystyle
1}{\displaystyle a_{m+2}+\cdots }} .
\end{eqnarray*}
In fact, $\alpha_m=G^m(\alpha)$ where $G$ is the Gauss map $x\mapsto \{1/x\}$ assigning to each $x\neq 0$ its  fractional part $\{1/x\}$. The
integers $a_m$ are given by $a_m=[\alpha_{m-1}^{-1}]$, where $[\cdot]$
denotes the integer part. We also set $a_0=0$ for convenience.

Let $\beta_m=\prod_{j=0}^m \alpha_j$. Define
\begin{eqnarray*}
Q_0&=& \left [\begin{matrix}
 q_0 & p_0 \cr
 q_{-1} & p_{-1}
 \end{matrix} \right ]
 =\left [\begin{matrix}
 1 & 0 \cr
 0 & 1
 \end{matrix}\right ], \\
Q_m&=& \left [\begin{matrix}
 q_m & p_m \cr
 q_{m-1} & p_{m-1}
 \end{matrix}\right]
 =\left [\begin{matrix}
 a_m & 1 \cr
 1 & 0
 \end{matrix}\right]
 \left [\begin{matrix}
 q_{m-1} & p_{m-1} \cr
 q_{m-2} & p_{m-2}
 \end{matrix}\right].
\end{eqnarray*}
It is easy to  see that  $Q_m=U(\alpha_m)\cdots U(\alpha_1)$ where
\begin{eqnarray*}
U(x)= \left [\begin{matrix}
 [x^{-1}] & 1 \cr
 1 & 0
 \end{matrix}\right ].
\end{eqnarray*}
Thus  $\textrm{det} (Q_m) = q_m p_{m-1}-p_m q_{m-1}=(-1)^{m}$. Note
that
\[
\begin{array}{lcrr}
\displaystyle \beta_m= (-1)^m(q_m \alpha - p_m)=
\frac{1}{q_{m+1}+\alpha_{m+1}q_m} , \\[0.5cm]
\displaystyle \frac{1}{q_m+q_{m+1}}< \beta_m < \frac {1}{q_{m+1}} .
\end{array}
\]
The renormalization operator $R$ is defined by
$R(\Phi)=M_{\alpha^\Phi}N_{U(\alpha)}(\Phi)$. Notice that if
$\Phi\in \Lambda$ satisfies $\gamma^{\Phi}_{1,0}=1$, then
$\alpha^{R(\Phi)}=G(\alpha^{\Phi})$ (recall that $G$ is Gauss
map) and 
\[
R^m(\Phi)=M_{\alpha_{m-1}}\circ N_{U(\alpha_{m-1})}\circ \cdots \circ
M_{\alpha_{0}}\circ N_{U(\alpha_{0})}(\Phi)= M_{\beta_{m-1}} (N_{
Q_m}(\Phi)).
\] 
It is obvious that the reducibility is invariant under
renormalization.

\subsection{Proof of Theorem \ref{Gloabl}}

Let $\Phi=\{(1,I),(\alpha,A)\}$
be a normalized $\Z^2-$action such that $\alpha\in {\rm RDC\, }(\gamma,\tau)$
for some $\gamma,\tau>0$. Suppose that there is  a measurable function $B:\T\rightarrow U(n)$
 satisfying
\begin{equation} \label{eq:B-conjugation}
{\rm Ad\, }(B).(\alpha,A)=(\alpha, C)
\end{equation}
where $C$ is  constant. Denote the spectrum of $C$ by
 \[{\rm spec\,}(C)=\{e^{2\pi i \phi_1},\cdots, e^{2\pi i \phi_n}\}.\]
 By assumption
$C\in \Sigma(\alpha)$ which means that
$\phi:=(\phi_1,\cdots,\phi_n)\in \Upsilon(\alpha)$
 (for the definition $\Upsilon(\alpha)$ we refer to (\ref{upsilon-def})).\\

The following fact of  $B$ will be needed in the proof of Theorem \ref{Gloabl}.

\begin{Lemma}\label{integral estimates}
For a.e. $\theta_0\in\T$, we have
\begin{eqnarray*}
\lim_{t\rightarrow 0+} \int_0^1|B(\theta_0+t\theta)B(\theta_0)^*-I|d\theta =0.
\end{eqnarray*}
\end{Lemma}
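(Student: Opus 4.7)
The plan is to reduce the claim to the one-dimensional Lebesgue differentiation theorem applied to the measurable bounded function $B:\T\to U(n)\subset M_n$, using the invariance of the operator norm under right multiplication by a unitary matrix.

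First I would observe that for every $\theta_0\in\T$ and every $\theta,t$,
\[
|B(\theta_0+t\theta)B(\theta_0)^{\ast}-I|
=|(B(\theta_0+t\theta)-B(\theta_0))B(\theta_0)^{\ast}|
=|B(\theta_0+t\theta)-B(\theta_0)|,
\]
since $B(\theta_0)^{\ast}\in U(n)$ and the operator norm on $M_n$ fixed in Sect.~\ref{Sect:analytic} is invariant under left or right multiplication by a unitary matrix. Thus the claim is equivalent to
\[
\lim_{t\to 0+}\int_0^1|B(\theta_0+t\theta)-B(\theta_0)|\,d\theta=0
\quad\text{for a.e. }\theta_0\in\T.
\]

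Next, the function $\theta\mapsto B(\theta)$ is measurable with values in the compact set $U(n)\subset M_n$, hence each matrix entry is a bounded measurable function on $\T$. Applying the one-dimensional Lebesgue differentiation theorem to the scalar nonnegative integrable function $\theta\mapsto |B(\theta)-B(\theta_0)|$ (for each fixed $\theta_0$), one has that for a.e. $\theta_0\in\T$
\[
\lim_{r\to 0+}\frac{1}{r}\int_{\theta_0}^{\theta_0+r}|B(s)-B(\theta_0)|\,ds=0.
\]
This is the standard statement that a.e.\ point of a locally integrable function is a Lebesgue point; the only (routine) subtlety is that the exceptional null set must be chosen so that it is simultaneously a set of Lebesgue points for \textit{all} the entries of $B$, but as there are only finitely many entries this is automatic.

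Finally, for any such $\theta_0$ I make the change of variable $s=\theta_0+t\theta$, $ds=t\,d\theta$, in the displayed limit, to obtain
\[
\lim_{t\to 0+}\int_0^1|B(\theta_0+t\theta)-B(\theta_0)|\,d\theta
=\lim_{t\to 0+}\frac{1}{t}\int_{\theta_0}^{\theta_0+t}|B(s)-B(\theta_0)|\,ds=0,
\]
which combined with the first identity gives the lemma. No step is genuinely hard; the only thing to be careful about is the unitary invariance of $|\cdot|$ that allows the reduction to Lebesgue differentiation, and the fact that a common full-measure set of Lebesgue points can be chosen for the matrix-valued $B$.
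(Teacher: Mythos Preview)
Your proof is correct. Both your argument and the paper's rest on the same underlying fact---that almost every point of a bounded measurable function is well-behaved in a Lebesgue-averaging sense---but the executions differ. The paper works directly with $|B(\theta)B(\theta_0)^*-I|$, invokes the Lebesgue density theorem to assert that a.e.\ $\theta_0$ is a point of approximate continuity of $B$, and then splits the integral over the set where the integrand is $<\epsilon$ and its complement, bounding each piece separately. Your route is cleaner: the unitary-invariance identity $|B(\theta_0+t\theta)B(\theta_0)^*-I|=|B(\theta_0+t\theta)-B(\theta_0)|$ immediately reduces the claim to the Lebesgue-point statement for $B$, after which a single change of variable finishes. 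This avoids the explicit $\epsilon$-splitting and is a couple of lines shorter; the paper's argument, by contrast, does not need to notice the unitary invariance of the operator norm.

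One minor phrasing issue: saying you apply the Lebesgue differentiation theorem ``to the function $\theta\mapsto|B(\theta)-B(\theta_0)|$ for each fixed $\theta_0$'' is imprecise, since that function varies with $\theta_0$ and one needs a null exceptional set independent of $\theta_0$. You immediately resolve this in the next sentence by invoking the standard Lebesgue-point theorem for $B$ itself (component-wise), so there is no actual gap.
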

\begin{proof}
Denote by $X$ the set of  measurable
continuity points   of $B$ and $B^*$. By the Lebesgue density theorem,
$X$ has full Lebesgue measure.

 Fix $\theta_0\in X$. For any $\epsilon>0$, let
\[I_t(\epsilon)=\{\theta\in [\theta_0,\theta_0+t]:
|B(\theta)B(\theta_0)^*-I|<\epsilon\}\] and
$J_t(\epsilon)=[0,t]-I_t(\epsilon)$. Note that
\begin{eqnarray*}
\lim_{t\rightarrow 0+}\frac{{\rm Leb.}(J_t(\epsilon))}{t}=0,
\end{eqnarray*}
where $Leb.$ denotes Lebesgue measure. Now we have
\begin{eqnarray*}
&&\int_{[0, 1]}|B(\theta_0+t \theta)B(\theta_0)^*-I|d\theta\\
&&=\frac{1}{t}\int_{[\theta_0,\theta_0+t]}|B(\theta)B(\theta_0)^*-I|d\theta \\
&&=\frac{1}{t}\left(\int_{I_t(\epsilon)}+\int_{J_t(\epsilon)}\right)|B(\theta)B(\theta_0)^*-I|d\theta\\
&&\leq \epsilon+\frac{{\rm Leb.}(J_t(\epsilon))}{t}\rightarrow \epsilon
\end{eqnarray*}
as $t\rightarrow 0+$, where ${\rm Leb.}$ stands for the Lebesgue measure. Since the inequality holds for any
$\epsilon>0$,
 the lemma is proved. \end{proof}

Without loss of generality, we
assume that the conclusion of Lemma \ref{integral estimates} holds for $\theta_0=0$ (if  not  we  make a translation). Consider
\[
R^m(\Phi)
=\left\{(1, A_{s_m}(\beta_{m-1}\cdot)),(\alpha_m,
A_{\tilde s_m}(\beta_{m-1}\cdot))\right\}, \quad m=1,2,\cdots 
\]
where 
\[
s_m:= (-1)^{m-1}q_{m-1} \quad \mbox{and} \quad \tilde s_m:= (-1)^{m}q_{m}.
\] 
We need the following Lemma.

\begin{Lemma}\label{Priori Bounds} There exists a constant $c>0$ such that for any 
 $\alpha$ irrational  and  that $A\in
\mathcal{G}_L^\rho (\T, U(n))$ the sequences 
$A_{s_m}(\beta_{m-1}\cdot)$ and $A_{\tilde s_m}(\beta_{m-1}\cdot)$, $m\ge 1$,   are uniformly
bounded in 
$\mathcal{G}_{\widetilde{L}} ^\rho (\R, U(n))$, where $\widetilde L=L/c$.
\end{Lemma}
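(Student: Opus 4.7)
The plan is to obtain the uniform Gevrey bound by a direct Leibniz estimate on the cocycle iterates, exploiting both the unitary structure (each factor has sup-norm exactly $1$) and the arithmetic identities $|s_m|\beta_{m-1} < 1$ and $|\tilde s_m|\beta_{m-1} < 1$, which follow from $\beta_{m-1} = 1/(q_m + \alpha_m q_{m-1})$ with $|s_m|=q_{m-1}$ and $|\tilde s_m|=q_m$. First I would write
\[
A_{s_m}(\beta_{m-1}\theta) = B_{N-1}(\theta)\cdots B_0(\theta), \qquad N = |s_m|,
\]
where each $B_j(\theta) = A(\beta_{m-1}\theta + a_j)^{\epsilon}$ with $\epsilon = \pm 1$ (depending on the sign of $s_m$) and shifts $a_j \in \R$. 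Since $A(\theta)^{-1} = A(\theta)^\ast$ for the unitary-valued map $A$, both $A$ and $A^{-1}$ are unitary and share the Gevrey norm $\|A\|_L$, so each $B_j$ satisfies
\[
\sup_\R|B_j| = 1, \qquad \sup_\R |B_j^{(r)}| \le \|A\|_L (L\beta_{m-1})^r r!^{\rho} \quad \text{for } r\ge 1.
\]

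Next, set $Q_k := B_{k-1}\cdots B_0$ with $Q_0 = I$, so that $Q_{k+1} = B_k Q_k$. Writing $M_k^{(r)} := \sup_\R |Q_k^{(r)}|$ and applying the Leibniz rule with the $s=0$ contribution isolated,
\[
M_{k+1}^{(r)} \le M_k^{(r)} + \sum_{s=1}^{r} \binom{r}{s}\|A\|_L(L\beta_{m-1})^s s!^\rho \, M_k^{(r-s)}.
\]
Inserting the inductive ansatz $M_k^{(r)} \le D_k C_0^r r!^\rho$ with a constant $C_0>0$ to be chosen, and using the elementary inequality $\binom{r}{s} s!^\rho (r-s)!^\rho \le r!^\rho$ (equivalent to $\binom{r}{s}^{\rho - 1} \ge 1$ for $\rho\ge 1$), one obtains
\[
M_{k+1}^{(r)} \le D_k C_0^r r!^\rho \Bigl(1 + \|A\|_L \sum_{s\ge 1} (L\beta_{m-1}/C_0)^s\Bigr).
\]
Under the condition $C_0 \ge 2L\beta_{m-1}$ the geometric sum is bounded by $2L\beta_{m-1}/C_0$, so $D_{k+1} \le D_k (1 + 2\|A\|_L L\beta_{m-1}/C_0)$. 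Iterating to $k=N$ with $D_0 = 1$, applying $(1+x)^N \le e^{Nx}$, and using $N\beta_{m-1} < 1$,
\[
D_N \le \exp\bigl(2\|A\|_L L N\beta_{m-1}/C_0\bigr) \le \exp(2\|A\|_L L/C_0).
\]

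To conclude, I fix $c\ge 1$ and take $C_0 = L/c$. For $m$ large enough that $\beta_{m-1} \le 1/(2c)$ (automatic since $\beta_{m-1} \to 0$), the requirement $C_0 \ge 2L\beta_{m-1}$ is satisfied, yielding $\|A_{s_m}(\beta_{m-1}\cdot)\|^{\mathcal{G}}_{L/c,K} \le \exp(2c\|A\|_L)$ uniformly in $m\ge m_0(c)$ and in $K>0$; the finitely many initial indices are handled trivially since each $Q_N$ is a finite product of Gevrey functions on $\T$, hence in every $\mathcal{G}^\rho_{L'}(\R, U(n))$. The identical argument with $N=|\tilde s_m|$ treats $A_{\tilde s_m}(\beta_{m-1}\cdot)$. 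The main subtle point is to prevent the Leibniz combinatorics from producing factors that grow exponentially in $N$; this is exactly resolved by separating the $s=0$ term, so that the unitary contribution $|B_k|=1$ gives only a multiplicative "$1$" per step while the remaining derivative terms contribute a per-step correction of size $O(\beta_{m-1})$. The balance between $N\sim \beta_{m-1}^{-1}$ iterations and per-step correction $O(\beta_{m-1})$ keeps $D_N$ bounded, via $(1+x/N)^N \le e^x$.
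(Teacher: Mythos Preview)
Your argument is correct in substance and takes a genuinely different route from the paper. The paper simply quotes the derivative estimate $\|\partial^r A_m\|_0 \le (cm)^r\|\partial^r A\|_0$ from \cite{AK,HY2}, combines it with the chain rule and the arithmetic inequalities $|s_m|\beta_{m-1},|\tilde s_m|\beta_{m-1}<1$, and reads off the Gevrey bound (the conclusion should in fact be $\widetilde L = cL$; the stated $\widetilde L = L/c$ and the sign $L^r$ in the paper's proof are typos). You instead establish the estimate from scratch via a Leibniz induction on the product $Q_k=B_{k-1}\cdots B_0$. The key idea---isolating the $s=0$ contribution so that unitarity gives only a factor $1$ per step, while the $s\ge1$ terms contribute a per-step correction of order $\beta_{m-1}$---is precisely the mechanism behind the cited inequality, so your proof is self-contained where the paper defers to a reference. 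What your approach buys is transparency; what the paper's buys is brevity.

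One point needs tightening. With your choice $c\ge1$, i.e.\ $C_0=L/c\le L$, the hypothesis $C_0\ge 2L\beta_{m-1}$ can fail for small $m$ (for instance $m=1$ with $\alpha$ close to $1$). Your proposed remedy---that each finite product lies ``in every $\mathcal{G}^\rho_{L'}$''---is not correct: a finite product of $\mathcal{G}^\rho_L$ functions lies only in \emph{some} $\mathcal{G}^\rho_{L'}$ with $L'$ depending on the number of factors, and since both the number of exceptional indices and the resulting $L'$ depend on $\alpha$, this does not produce a universal $c$. The clean fix is simply to take $C_0=2L$ (equivalently $c=1/2$, matching the corrected reading $\widetilde L=cL$ with $c\ge1$ replaced by $\widetilde L=2L$). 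Then $L\beta_{m-1}/C_0=\beta_{m-1}/2<1/2$ holds for every $m\ge1$, your geometric-series bound gives $D_{k+1}\le D_k(1+\|A\|_L\beta_{m-1}/(1-\beta_{m-1}/2))$, and since $N\beta_{m-1}<1$ one obtains $D_N\le\exp(\|A\|_L)$ uniformly in $m$ and $\alpha$, with no case distinction needed.
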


\begin{proof} Recall from \cite{AK, HY2} the following estimate
\[
\forall\, m,r\in\{1,2,\cdots\},\ \forall\, \theta\in \T : \quad
\|\partial^r A_m(\theta)\| \leq m^r c^r ||\partial^rA\|_0,
\]
where $\|\cdot\|_{0}$ is the $C^0$ norm and $c>0$ is a
constant.
Applying it  to both  $(\alpha, A)$ and $(\alpha, A)^{-1}$ we
obtain
\[||\partial^rA_k||_0\leq |k|^r c^r ||\partial^rA||_0\]
 for all $0\neq k\in
\Z$. It follows that for all $\theta\in \R$ and $m\ge 1$
\[||\partial^rA_{(-1)^{m-1}q_{m-1}}(\beta_{m-1}\theta)|| \leq  |\beta_{m-1}q_{m-1}|^r  c^r
||\partial^rA||_0\leq c^r ||\partial^rA||_0;\]
\[||\partial^rA_{(-1)^{m}q_{m}}(\beta_{m-1}\theta)|| \leq  |\beta_{m-1}q_{m}|^r  c^r
||\partial^rA||_0\leq c^r ||\partial^rA||_0\]
By assumption  $A\in \mathcal{G}_L^\rho(\T, U(n))$, thus there
exists $M>0$ such that
\[\sup_{r\in  \N}\sup_{\theta\in \R}
(\|\partial^{r}A(\theta)\|L^r r!^{-\rho})\leq M.\]
This implies
\[
\sup_{r\in  \N}\sup_{\theta\in \R}
(\|\partial^{r}A_{(-1)^{m-1}q_{m-1}}(\beta_{m-1}\theta)\|\left(\frac{L}{c}\right)^r r!^{-\rho})\leq M, 
\]
\[ 
\sup_{r\in  \N}\sup_{\theta\in \R}
(\|\partial^{r}A_{(-1)^{m}q_{m}}(\beta_{m-1}\theta)\|\left(\frac{L}{c}\right)^r r!^{-\rho})\leq M
\]
which completes the proof. \end{proof}

Set $H(\cdot):=e^{h(\cdot)}$, where
$h(\varphi_1,\cdots,\varphi_n):=2\pi i\, {\rm diag\, }(\varphi_1,\cdots,\varphi_n).$
 Choose 
 $S\in U(n)$ so that
\begin{equation} \label{eq:C-conjugation}
C=S^* H(\phi) S.
\end{equation}
The following Lemma give us  information about the limit points of
the set of  $\Z^2-$actions $\{R^m(\Phi): m=1,2,\cdots\}$.

\begin{Lemma}\label{convertoconst}
There exist   sequences $(m_j)\subset \N$ and $(l_j),\, (\widetilde{l}_j)\subset \Z^n$ and  $\psi,\widetilde{\psi}\in [0,1]^n$ such that
$\alpha_{m_j}\in {\rm DC\, }(\gamma,\tau)$ and
 \[\alpha_{\infty}:=\lim_{j\rightarrow \infty}\alpha_{m_j}\in {\rm DC\, }(\gamma,\tau);\]
\[ \lim_{j\rightarrow \infty}(s_j\phi+l_j)= \psi;\]
\[ \lim_{j\rightarrow \infty}(\widetilde{s}_j\phi+\widetilde{l}_j)=\widetilde{\psi},\]
where $s_j=(-1)^{m_j-1}q_{m_j-1}$ and
$\widetilde{s}_j=(-1)^{m_j}q_{m_j}$, and the sequences of functions $A_{s_j}(\beta_{m_j-1}\cdot)$ and
$A_{\widetilde{s}_j}(\beta_{m_j}\cdot)$
 converge  in $\mathcal{G}_N^\rho(\R, U(n))$ to  $B(0)^*S^*H(\psi)SB(0)$ and  $B(0)^*S^*H(\widetilde{\psi})SB(0)$
 respectively, for some fixed $N>\widetilde
L$.
\end{Lemma}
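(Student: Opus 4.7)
The plan is to isolate the claimed subsequence in three stages --- arithmetic, phase, Gevrey --- and then identify the limits by combining the conjugation formula with Lemma \ref{integral estimates}.

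First, since $\alpha\in{\rm RDC\, }(\gamma,\tau)$, infinitely many $m$ satisfy $\alpha_m\in{\rm DC\, }(\gamma,\tau)$. Along such indices $(\alpha_m)\subset(0,1)$ is bounded, so one extracts a convergent subsequence $\alpha_{m_j}\to\alpha_\infty$. Passing the defining inequalities $|e^{2\pi i k\alpha_{m_j}}-1|\ge\gamma^{-1}/|k|^\tau$ to the limit gives the same inequality for $\alpha_\infty$ and every $k\ne 0$; in particular $\alpha_\infty$ cannot be rational (a rational $p/q$ would violate the inequality for $k=q$), so $\alpha_\infty\in{\rm DC\, }(\gamma,\tau)$. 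Next, by sequential compactness of $\T^n=\R^n/\Z^n$, extract a further subsequence and choose integer vectors $l_j,\widetilde l_j\in\Z^n$ so that $s_j\phi+l_j\to\psi$ and $\widetilde s_j\phi+\widetilde l_j\to\widetilde\psi$ for some $\psi,\widetilde\psi\in[0,1]^n$. Since $H$ is $\Z^n$-periodic this also yields $H(s_j\phi)\to H(\psi)$ and $H(\widetilde s_j\phi)\to H(\widetilde\psi)$.

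At the third stage, Lemma \ref{Priori Bounds} bounds the sequences $A_{s_j}(\beta_{m_j-1}\cdot)$ and $A_{\widetilde s_j}(\beta_{m_j-1}\cdot)$ uniformly in $\mathcal{G}_{\widetilde L}^\rho(\R,U(n))$ with $\widetilde L=L/c$. For any $N>\widetilde L$ and $K>0$ the embedding $\mathcal{G}_{\widetilde L}^\rho([-K,K],U(n))\hookrightarrow\mathcal{G}_N^\rho([-K,K],U(n))$ is compact: a uniform bound on all $\widetilde L$-weighted derivatives produces an equicontinuous family at each derivative order, so Arzel\`a-Ascoli with a Cantor diagonal argument yields $C^\infty$-convergence on $[-K,K]$, which upgrades to convergence in $\mathcal{G}_N^\rho$ because the residual weight $(\widetilde L/N)^{|r|}$ provides an extra contraction at large order $|r|$. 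A further diagonal extraction over $K\nearrow\infty$ produces a subsequence, still written $(m_j)$, along which both sequences converge in $\mathcal{G}_N^\rho(\R,U(n))$ to limits $\Phi_\infty,\widetilde\Phi_\infty$.

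To identify these limits, iterate the relation ${\rm Ad}(B).(\alpha,A)=(\alpha,C)$, which produces
\[
A_k(\theta)=B(\theta+k\alpha)\,C^k\,B(\theta)^{-1},\qquad C^k=S^{\ast}H(k\phi)S,\quad k\in\Z.
\]
The identity $\beta_{m-1}=(-1)^{m-1}(q_{m-1}\alpha-p_{m-1})$ yields $s_j\alpha\equiv\beta_{m_j-1}\pmod{\Z}$, so by the $\Z$-periodicity of $B$ one may rewrite
\[
A_{s_j}(\beta_{m_j-1}\theta)=B\bigl(\beta_{m_j-1}(1+\theta)\bigr)\,S^{\ast}H(s_j\phi)\,S\,B(\beta_{m_j-1}\theta)^{-1},
\]
and analogously for $\widetilde s_j$. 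Lemma \ref{integral estimates} at $\theta_0=0$, after the linear rescaling $u=\beta_{m_j-1}\theta$, shows that $B(\beta_{m_j-1}(1+\theta))$ and $B(\beta_{m_j-1}\theta)^{-1}$ converge in $L^1_{\mathrm{loc}}(\R)$ to the constants $B(0)$ and $B(0)^{\ast}$, respectively. Combined with $H(s_j\phi)\to H(\psi)$ and the uniform boundedness of the factors in $U(n)$, this gives $L^1_{\mathrm{loc}}(\R)$ convergence of $A_{s_j}(\beta_{m_j-1}\cdot)$ to the constant matrix appearing in the lemma. Since the Gevrey convergence of the previous stage implies $L^1_{\mathrm{loc}}$ convergence, and $L^1$-limits are a.e.\ unique, $\Phi_\infty$ coincides with this matrix pointwise by continuity; the same reasoning identifies $\widetilde\Phi_\infty$. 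The main obstacle is precisely this identification: Gevrey compactness of Stage 3 produces a smooth limit but says nothing about its value, while the measurable information on $B$ from Lemma \ref{integral estimates} is available only in a weak $L^1$ sense, and the plan consists in matching the two via uniqueness of $L^1$-limits on compact intervals.
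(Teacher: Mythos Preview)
Your argument is correct and follows the same overall architecture as the paper's proof: extract a subsequence by successive compactness (arithmetic, phase, Gevrey), then identify the Gevrey limit via the conjugation formula and Lemma \ref{integral estimates}. The one noteworthy variation is in the identification step. The paper argues pointwise: it uses the $C^1$-uniform bound from Lemma \ref{Priori Bounds} to get equicontinuity of the $U_j$, then for each fixed $\theta$ finds a nearby $\vartheta$ (via Lebesgue density of the set $I(0,\varepsilon)$) at which the conjugation identity can be evaluated with $B(\beta_{m_j-1}\vartheta)$ and $B(\beta_{m_j-1}(\vartheta+1))$ both $\varepsilon$-close to $B(0)$. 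You instead pass through $L^1_{\mathrm{loc}}$: the conjugation identity holds a.e., each factor converges in $L^1_{\mathrm{loc}}$ by (a trivial extension of) Lemma \ref{integral estimates}, the factors are uniformly bounded so the product converges in $L^1_{\mathrm{loc}}$, and the $L^1$ limit must agree with the continuous Gevrey limit. Your route is a little slicker in that it avoids the explicit equicontinuity/density juggling; the paper's route is more hands-on and makes the role of the measurable continuity point at $0$ very transparent. Both lead to the same constant limit $B(0)\,S^*H(\psi)S\,B(0)^*$ (note the order of $B(0)$ and $B(0)^*$ that your formula actually produces; the statement as printed has them swapped, but the paper's own proof and the subsequent definition of $\Phi_\infty$ agree with your computation).
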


\begin{proof} By Lemma $\ref{Priori Bounds}$, there exist
$\widetilde{L}>0$, such that $U_m$ and $V_m$  are uniformly bounded
in $\mathcal{G}_{\widetilde{L}}^\rho(\R, U(n))$. Fix $N>\widetilde
L$. Then for any $K>0$ the inclusion
\[
\mathcal{G}_{\widetilde{L}}^\rho([-K,K], U(n))\ \hookrightarrow \
\mathcal{G}_N^\rho([-K,K], U(n))
\] 
is compact (see \cite{L-M}, Ch. 7) and by the diagonal procedure one can find a subsequence of $m_j$ such that the sequences 
\[
U_j:=A_{s_j}(\beta_{m_j-1}\cdot)
\]
and 
\[
\widetilde{U}_j:= A_{\widetilde{s}_j}(\beta_{m_j}\cdot)
\]
 converge in $\mathcal{G}_{N}^\rho(\R, U(n))$ to some $U_\infty$ and $\widetilde{U}_\infty$, respectively. Without loss of generality we
assume that 
\[
H(s_j \phi)\to H(\psi),\quad
H(\widetilde{s}_j \phi)\to H(\widetilde{\psi})
\] 
for some $\psi,\widetilde{\psi}\in [0,1]^n$
and $\alpha_{m_j}\to \alpha_{\infty}$ for some $\alpha_{\infty}\in {\rm DC\, }(\gamma,\tau)$ 
(otherwise we choose  subsequences).

   Recall that 
$0$ is a measurable continuity point of both $B$ and $B^*$. Thus for any
$\varepsilon>0$ and  $d>1$ fixed 
\begin{eqnarray*}
\lim_{j\rightarrow \infty}\, \frac{1}{2d\beta_{m_j} }\, \mbox{Leb.}\left(I(0,\varepsilon)\cap [-\beta_{m_j}
d, \beta_{m_j} d]\right)=1,
\end{eqnarray*}
where Leb. is the Lebesgue measure and 
\[
I(0,\varepsilon):=\{\vartheta\in \R: \|B(\vartheta)-B(0)\|<\varepsilon,
\|B(\vartheta)^*-B(0)^*\|< \varepsilon\}.
\]
By Lemma $\ref{Priori Bounds}$ the functions  $U_{j}$ are $C^1$-uniformly bounded on $\R$, hence, they are  
equicontinuous and  there exists a positive $\widehat{\delta}$ such that 
\[
\|U_{j}(\theta)-U_{j}(\vartheta)\|<\varepsilon
\] 
for all $j$ as long as $|\theta-\vartheta|<\widehat{\delta}$. Then for any 
$j\gg 1$  and 
$\theta\in [-d+1, d-1]$ there exists $\vartheta\in
[-d+1,d-1]\cap (\theta-\widehat{\delta}, \theta+\widehat{\delta})$  such that
\[
\beta_{m_j} \vartheta, \quad \beta_{m_j}( \vartheta+1)\in I(0,\varepsilon)
\]
and we obtain
\begin{eqnarray}\label{3-small-cond}
\max\{\|U_{j}(\theta)-U_{j}(\vartheta)\|,\,
\|B(\beta_{m_j} \vartheta)-B(0)\|,\,
\|B(\beta_{m_j}(\vartheta+1))-B(0)\|\}<\varepsilon.
\end{eqnarray} 
Using the conjugations \eqref{eq:B-conjugation} and \eqref{eq:C-conjugation} we get  
\begin{eqnarray}\label{ren-conj}
B(\beta_{m_j} (\vartheta+1))^{-1}U_{j}(\vartheta)B(\beta_{m_j}
\vartheta)=S^*H(s_j \phi)S, 
\end{eqnarray}
then by  \eqref{3-small-cond} and \eqref{ren-conj} we obtain that
\begin{eqnarray*}
\|B(0)^*\widetilde{U}_{j}(\theta)B(0)-S^*H(s_j
\phi)S\|<3\varepsilon.
\end{eqnarray*}
Recall that $U_{j}$ converge in $\mathcal{G}_{N}^\rho(\R, U(n))$ to  $U_\infty$ and $H(s_j \phi) \rightarrow H(\psi)$. Then  for any $\varepsilon>0$ and  $d>0$ fixed and $\theta\in [-d+1,d-1]$ we have
\begin{eqnarray*}
\|B(0)^*U_\infty(\theta)B(0)-S^*H(\psi)S\|<3\varepsilon, 
\end{eqnarray*}
hence,  
\[
 B(0)^*U_\infty
B(0)\equiv S^*H(\psi)S. 
\]
In the same way we get 
\[
 B(0)^*\widetilde{U}_\infty B(0)\equiv S^*H(\widetilde{\psi})S
\] 
which completes the proof. \end{proof}

We have proved that $R^m(\Phi)$ has a subsequence
\begin{eqnarray*}
\Phi_j:=R^{m_j}(\Phi)&=&\{(1, A_{s_j}(\beta_{m_j-1}\cdot)),(\alpha_{m_j},
A_{\tilde s_j}(\beta_{m_j-1}\cdot))\}\\
&=:&\{(1, U_{j}),(\alpha_{m_j},
\widetilde{U}_{j})\}
\end{eqnarray*}
with $s_j=(-1)^{m_j-1}q_{m_j-1}$ and $\tilde s_j=(-1)^{m_j}q_{m_j}$, 
converging to the constant $\Z^2-$action
\[\Phi_\infty:=\{(1,B(0)S^*H(\psi)SB(0)^*),(\alpha_\infty,
B(0)S^*H(\widetilde{\psi})SB(0)^*)\}.\] We want to show that after a conjugation
this subsequence will become a sequence of normalized $\Z^2-$actions
converging in $\mathcal{G}_{\widetilde{L}}^\rho(\R, U(n))$ to a
constant normalized $\Z^2-$action. Set $Q(\theta ):=S^*H(  \theta \psi )S$.

\begin{Lemma}\label{normform}
There is  a sequence  $P_j$ in
$\mathcal{G}_{N}^\rho(\R, U(n))$ converging  to  $I$ in
$\mathcal{G}_N^\rho([-2,2], U(n))$,  such that
\begin{eqnarray}
\Psi_j:={\rm Ad\, }\left(B(0)QP_j\right).\Phi_j= \left\{(1,I),(\alpha_{m_j},W_j)\right\} .
\end{eqnarray}
Moreover,  $W_j\in \mathcal{G}_{N}^\rho(\T, U(n))$ converges to
$S^*H(\widetilde{\psi}- \alpha_{\infty}\psi)S$.
\end{Lemma}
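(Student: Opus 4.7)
The plan is to perform the normalization in two stages, the second being a direct application of Lemma~\ref{norm}. Write $B_0:=B(0)$ and observe that $\theta\mapsto B_0 Q(\theta)=B_0 S^{*}H(\theta\psi)S$ is analytic, hence lies in every Gevrey class. Conjugating $\Phi_j$ by $(0,B_0Q)$ yields the $\Z^2$-action $\{(1,C_j),(\alpha_{m_j},D_j)\}$ with
\[
C_j(\theta)=Q(\theta+1)^{-1}B_0^{*}U_j(\theta)B_0Q(\theta),\quad D_j(\theta)=Q(\theta+\alpha_{m_j})^{-1}B_0^{*}\widetilde U_j(\theta)B_0Q(\theta).
\]
The key observation is that all the factors $H(\cdot)$ are diagonal and hence commute with one another. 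Using the convergence $U_j\to B_0 S^{*}H(\psi)SB_0^{*}$ in $\mathcal{G}_N^\rho$ supplied by Lemma~\ref{convertoconst}, combined with the standard Gevrey multiplication/composition estimates (at the cost of enlarging $N$ by a universal multiplicative constant), I obtain
\[
C_j\;\longrightarrow\;Q(\cdot+1)^{-1}S^{*}H(\psi)SQ(\cdot)=S^{*}H(-(\cdot+1)\psi)H(\psi)H(\cdot\,\psi)S=I
\]
in $\mathcal{G}_N^\rho$. In particular $\|C_j-I\|^{\mathcal{G}}_{N,3}\to 0$, so for $j$ large enough the hypothesis \eqref{eq:C} of Lemma~\ref{norm} is satisfied.

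Applying Lemma~\ref{norm} to $\{(1,C_j),(\alpha_{m_j},D_j)\}$ then produces $P_j\in\mathcal{G}_{cN}^\rho(\R,U(n))$ conjugating $(1,C_j)$ to $(1,I)$, with
\[
\|P_j-I\|^{\mathcal{G}}_{cN,2}\le\mbox{cst.}\,\|C_j-I\|^{\mathcal{G}}_{N,3}\longrightarrow 0.
\]
Setting $W_j:=P_j(\cdot+\alpha_{m_j})^{-1}D_jP_j$ and replacing $N$ by $cN$ if needed, I get $\mathrm{Ad}(B_0QP_j).\Phi_j=\{(1,I),(\alpha_{m_j},W_j)\}$; the $\Z$-periodicity $W_j(\cdot+1)=W_j(\cdot)$ is automatic from the commutation in $\Z^2$. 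The same diagonal-matrix cancellation computes the limit of $D_j$: using $\widetilde U_j\to B_0 S^{*}H(\widetilde\psi)SB_0^{*}$ and $\alpha_{m_j}\to\alpha_\infty$,
\[
D_j\;\longrightarrow\;Q(\cdot+\alpha_\infty)^{-1}S^{*}H(\widetilde\psi)SQ(\cdot)=S^{*}H(\widetilde\psi-\alpha_\infty\psi)S,
\]
and combined with $P_j\to I$ this yields $W_j\to S^{*}H(\widetilde\psi-\alpha_\infty\psi)S$, as required.

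The main obstacle is the bookkeeping of Gevrey constants when multiplying and composing the uniformly Gevrey-bounded sequences $U_j,\widetilde U_j$ with the analytic factors appearing in $C_j,D_j$ and with the approximately trivial factors $P_j$. All such estimates follow from the standard Gevrey algebra and composition results already invoked throughout the paper, and amount to replacing $N$ by a universal multiple of itself; the real content of the proof is the diagonal commutation identity displayed above.
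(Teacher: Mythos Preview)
Your proof is correct and follows essentially the same approach as the paper's: first conjugate by $B(0)Q$ to bring $\Phi_j$ close to a normalized action, then apply Lemma~\ref{norm} to finish the normalization, tracking the limits through both steps. You are in fact slightly more explicit than the paper, spelling out the diagonal cancellation $H(-(\cdot+1)\psi)H(\psi)H(\cdot\,\psi)=I$ and acknowledging the Gevrey-constant bookkeeping (the passage from $N$ to $cN$) that the paper glosses over.
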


\begin{proof}
Set $\widetilde{\Phi}_j:={\rm Ad\,}(B(0)Q).\Phi_j=Ad(B(0)Q).R^{m_j}(\Phi).$ 
It follows from the definition of  $\Phi_\infty$ and $Q$  that
\[{\rm Ad \,}(B(0)Q).\Phi_\infty=\{(1,I),(\alpha_\infty,S^*H(\widetilde{\psi}-\alpha_\infty \psi)S)\}.\]
Thus $\widetilde{\Phi}_j(1,0)$ and
$\widetilde{\Phi}_j(0,1)$ converge uniformly in $\mathcal{G}_{N} ^\rho (\R, U(n))$ to $(1,I)$ and $(\alpha_\infty, S^*H(\psi-\alpha_\infty
\psi)S)$ respectively. 
By Lemma $\ref{norm}$, there exist a sequence of $\,P_j\in
\mathcal{G}_{N}^\rho(\R, U(n))$, satisfying
\[\lim_{j\rightarrow \infty}\|P_j-I\|_{N,2}^{\mathcal{G}}=0,\]
 such
that ${\rm Ad \,}(P_j).\widetilde{\Phi}_j$ is a sequence of normalized
$\Z^2$-action $\{(1,I),(\alpha_{m_j},W_j)\}$. Thus $W_j\in
\mathcal{G}_{N}^\rho(\T, U(n))$ ($W_j$ is automatically
$\Z-$periodic thanks to the commutation) and it converges to
$S^*H(\widetilde{\psi}- \alpha_{\infty}\psi)S$, i.e.,
\[\lim_{j\rightarrow \infty}\|W_j-S^*H(\widetilde{\psi}- \alpha_{\infty}\psi)S\|^{\mathcal{G}}_{N}=0. \]
\end{proof}

On the other hand,  $(\alpha_{m_j},W_j)$ can be
conjugated to the constant
\[
(\alpha_{m_j}, S^*H((-1)^{m_j}\beta_{m_j-1}^{-1}\phi)S) \
\] by a
 measurable conjugation. To put it more precisely, let us consider the following sequences
\[E_j(\theta)=S^*H((-1)^{m_j-1}q_{m_j-1} \phi \theta)S, \]
\[T_j(\theta)=P_j(\theta)^{-1}Q(\theta)^{-1}B(0)^{*}, \] 
and \[ G_j(\theta)=T_j(\theta)B(\beta_{m_j-1}
\theta)E_j(\theta),\]
where $Q=S^*H(\psi\theta)S$ is introduced  in Lemma \ref{normform}). 
\begin{Lemma}\label{measconj}
The measurable functions $G_j$ are all
$\Z-$periodic and satisfy
\[\liminf_{j\rightarrow
+\infty}[G_j]>\frac{1}{2}n^{-3/2},\]
\[
 {\rm Ad\, }(G_j).(\alpha_{m_j},
W_j)=(\alpha_{m_j}, S^*H((-1)^{m_j-1}\beta_{m_j-1}^{-1}\phi)S).\]
\end{Lemma}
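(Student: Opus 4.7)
The plan is to verify the three conclusions in sequence. $\Z$-periodicity of $G_j$ and the conjugation identity are purely algebraic, relying on the definitions of $T_j$, $E_j$, and $W_j$ together with the arithmetic congruences $s_j\alpha\equiv \beta_{m_j-1}$ and $\tilde s_j\alpha\equiv \beta_{m_j-1}\alpha_{m_j}\pmod\Z$ (both following from $\beta_m = (-1)^m(q_m\alpha - p_m)$), which allow the $\Z$-periodicity of $B$ to absorb the integer shifts $(-1)^{m_j-1}p_{m_j-1}$ and $(-1)^{m_j}p_{m_j}$.

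For $\Z$-periodicity, I would first extract from $\Psi_j(1,0)=(1,I)$ the relation $T_j(\theta+1)=T_j(\theta)U_j(\theta)^{-1}$, and then use the iterated conjugation $B(\cdot+s_j\alpha)^{-1}A_{s_j}B=C^{s_j}$ together with the first congruence to rewrite $U_j(\theta)^{-1}B(\beta_{m_j-1}(\theta+1))$ as $B(\beta_{m_j-1}\theta)C^{-s_j}$. The diagonal identity $E_j(\theta+1)=C^{s_j}E_j(\theta)$ then cancels the trailing $C^{-s_j}$ and produces $G_j(\theta+1)=G_j(\theta)$. For the conjugation identity, $W_j(\theta)=T_j(\theta+\alpha_{m_j})\widetilde U_j(\theta)T_j(\theta)^{-1}$ by the definition of $\Psi_j$; substituting into $G_j(\cdot+\alpha_{m_j})^{-1}W_j G_j$ the $T_j$ factors cancel, and applying $B(\cdot+\tilde s_j\alpha)^{-1}A_{\tilde s_j}B=C^{\tilde s_j}$ together with the second congruence collapses the middle block to $C^{\tilde s_j}$. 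A straightforward diagonal computation combined with $q_{m_j-1}\alpha_{m_j}+q_{m_j}=\beta_{m_j-1}^{-1}$ then yields the stated constant $S^*H((-1)^{m_j-1}\beta_{m_j-1}^{-1}\phi)S$.

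For the lower bound the strategy is to apply Lemma \ref{Uniform Bound} with $D_j(\theta):=S^*H(-l_j\theta)S$, where the integer vectors $l_j\in\Z^n$ are the ones furnished by Lemma \ref{convertoconst}. Because $l_j$ has integer entries, $D_j$ is $\Z$-periodic and $\lceil D_j\rfloor\geq n^{-3/2}$ by the $U(n)$-invariance of $\lceil\cdot\rfloor$ and Lemma \ref{Lemma:main-norm}, $3)$. Writing $G_j=P_j^{-1}\bigl[Q^{-1}B(0)^*B(\beta_{m_j-1}\cdot)Q\bigr]\bigl[Q^{-1}E_j\bigr]$, three ingredients combine to show $\int_0^1|G_j-D_j|\,d\theta\to 0$: $P_j^{-1}\to I$ uniformly on $[0,1]$ by Lemma \ref{normform}; the bracketed conjugate tends to $I$ in $L^1([0,1])$ by Lemma \ref{integral estimates} (absorbing the unitary $Q$); and $Q^{-1}E_j=S^*H((s_j\phi-\psi)\theta)S=D_j\cdot S^*H(\eta_j\theta)S$ with $\eta_j:=s_j\phi+l_j-\psi\to 0$, so this factor converges to $D_j$ uniformly on $[0,1]$. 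Lemma \ref{Uniform Bound} then gives $\liminf_j\lceil G_j\rfloor\geq n^{-3/2}>\tfrac12 n^{-3/2}$. The main obstacle is the bookkeeping of the Diophantine congruences in the first two steps; once these are in hand, all the cancellations are immediate.
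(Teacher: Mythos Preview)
Your proposal is correct and follows essentially the same approach as the paper. The paper dispatches the $\Z$-periodicity and the conjugation identity in one line (``one can easily check'' that ${\rm Ad\,}(G_j).\Psi_j$ is the claimed normalized constant action), while you spell out the underlying congruences $s_j\alpha\equiv\beta_{m_j-1}$ and $\tilde s_j\alpha\equiv\beta_{m_j-1}\alpha_{m_j}\pmod\Z$ and the cancellations they induce; for the lower bound, both you and the paper compare $G_j$ with $S^*H(-l_j\theta)S$ and invoke Lemma~\ref{Lemma:main-norm} together with Lemma~\ref{Uniform Bound}, the only cosmetic difference being that the paper packages the comparison via $\widetilde G_j:=G_j\,S^*H(l_j\theta)S\to I$ in $L^1$ and then passes to the adjoint, whereas you factor $G_j$ directly and estimate each factor.
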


\begin{proof}
One can easily check  that
\begin{eqnarray*}
{\rm Ad \, }(G_j).\Psi_j&=&{\rm Ad \, }(B(\beta_{m_j-1}
\theta)E_j(\theta)).R^{m_j}(\Phi)\\
&=&\{(1,I), (\alpha_{m_j},
S^*H((-1)^{m_j}\beta_{m_j-1}^{-1}\phi)S)\}.    
\end{eqnarray*}
In particular, any  $G_j$ is $\Z-$periodic.
 
Let 
\begin{eqnarray*}
\widetilde{G}_j(\theta)&=&G_j(\theta)S^*H(l_j \theta)S\\
&=&P_j(\theta)^{-1}S^*H(-\psi\theta)SB(0)^*B(\beta_{m_j-1}\theta)S^*H((s_j \phi+l_j )\theta)S 
\end{eqnarray*}
 ($l_j$ was given in  Lemma \ref{convertoconst}). Then   for any $\theta\in \T$ we
have the estimates
\begin{eqnarray*}
||\widetilde{G}_j(\theta)-I|| &\leq & 
||P_j(\theta)^{-1}-I||\\
&+&||B(0)^*B(\beta_{m_j-1}\theta)-I||\\
&+&||H((-\psi+s_j\phi+l_j)\theta)-I||
\end{eqnarray*}
where  $s_j=(-1)^{m_j}q_{m_j-1}$. 
By Lemma \ref{convertoconst} and  \ref{normform}, we have  $s_j\phi+l_j\rightarrow \psi$ and
$P_j(\theta)\rightarrow I$ uniformly on $[-2,2]$. Moreover, by
Lemma $\ref{integral estimates}$
\begin{eqnarray*}
\lim_{j\rightarrow \infty}\int_0^1||B(0)^*B(\beta_{m_j}\theta)-I||d\theta
=0.
\end{eqnarray*}
Thus we have
\begin{eqnarray*}
\lim_{j\rightarrow \infty}\int_0^1||G_j(\theta)^*-S^*H(-l_j\theta)S||d\theta
&=&\lim_{j\rightarrow \infty}\int_0^1||\widetilde{G}_j(\theta)^*-I||d\theta\\
&=&\lim_{j\rightarrow \infty}\int_0^1||\widetilde{G}_j(\theta)-I||d\theta=0.
\end{eqnarray*}
By Lemma $\ref{Lemma:main-norm}$,  $\lceil H(-l_j\theta)\rfloor = \lceil S^*H(-l_j\theta)S\rfloor\geq n^{-3/2}$. Then by Lemma $\ref{Uniform Bound}$, $\lceil G_j^*\rfloor>\frac{1}{2} n^{-3/2}$
as $j$ is large enough. 
\end{proof}

We are ready to complete the  proof of  Theorem $\ref{Gloabl}$.  We would like to apply  the local result Theorem $\ref{Local}$. To
this end, we need the following fact. 

\begin{Lemma}\label{Renormalized Condition}
Let $\phi =(\phi_1,\cdots, \phi_n)\in \Upsilon(\alpha)$, where
 $\Upsilon(\alpha)$ is defined in
(\ref{upsilon-def}).  Then for all $m\in \N$ one has
$(-1)^m\beta_{m-1}^{-1}\phi\in \Upsilon(\alpha_m)$.
\end{Lemma}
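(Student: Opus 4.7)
The plan is to reduce the Diophantine condition defining $\Upsilon(\alpha_m)$ for the tuple $(-1)^m\beta_{m-1}^{-1}\phi$ to the one satisfied by $\phi$ for $\alpha$, via a linear change of indices induced by the matrix $Q_m$. Concretely, fix $p\neq q$, $k\in\Z$, $j\in\Z$, set $c:=(-1)^m\beta_{m-1}^{-1}(\phi_p-\phi_q)$, and examine
\[
\Delta := k\alpha_m + c - j .
\]
Using the two identities $\beta_{m-1}\alpha_m=\beta_m=(-1)^m(q_m\alpha - p_m)$ and $\beta_{m-1}=(-1)^{m-1}(q_{m-1}\alpha - p_{m-1})$, a direct calculation yields the key relation
\[
\beta_{m-1}\,\Delta \ = \ (-1)^m\bigl[K\alpha - J + (\phi_p-\phi_q)\bigr],
\qquad K:=kq_m+jq_{m-1},\ \ J:=kp_m+jp_{m-1}.
\]
Since $\det Q_m=(-1)^m$, the assignment $(k,j)\mapsto(K,J)$ is a bijection of $\Z^2$, so the hypothesis $\phi\in\Upsilon(\alpha;\chi,\nu)$ applies to the right-hand side and gives $|\beta_{m-1}\Delta|\ge \chi(1+|K|)^{-\nu}$.

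The remaining step is to bound $|K|$ by a constant (depending on $m$ and $\phi$) times $1+|k|$. I would split into two cases. If $|\Delta|\ge 1/2$, the desired lower bound is trivial. If $|\Delta|<1/2$, then $|j|\le |k|\alpha_m+|c|+1/2\le |k|+|c|+1/2$, so
\[
|K| \le q_m|k|+q_{m-1}|j| \le (q_m+q_{m-1})|k| + q_{m-1}(|c|+1/2) \le C_m(1+|k|),
\]
where $C_m=C_m(\phi,m)>0$. Combining these estimates gives
\[
|\Delta| \ \ge\ \min\!\Bigl(\tfrac12,\ \beta_{m-1}^{-1}\chi\,C_m^{-\nu}\Bigr)\,(1+|k|)^{-\nu},
\]
which is exactly the assertion $(-1)^m\beta_{m-1}^{-1}\phi\in\Upsilon(\alpha_m;\chi',\nu)$ with an appropriate $\chi'>0$.

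There is no serious obstacle; the one point that requires care is the case analysis to control $|j|$ (and hence $|K|$) by $|k|$ up to an $m$-dependent constant, since a priori $|c|$ can be large due to the factor $\beta_{m-1}^{-1}$. One should also remark that the constants $\chi',\nu$ produced do depend on $m$, but this is harmless because the conclusion requires only $(-1)^m\beta_{m-1}^{-1}\phi\in\bigcup_{\chi,\nu>0}\Upsilon(\alpha_m;\chi,\nu)$.
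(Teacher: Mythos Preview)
Your proposal is correct and follows essentially the same route as the paper: both proofs hinge on the identity
\[
\beta_{m-1}\bigl(k\alpha_m+(-1)^m\beta_{m-1}^{-1}(\phi_p-\phi_q)-j\bigr)=(-1)^m\bigl((kq_m+jq_{m-1})\alpha+(\phi_p-\phi_q)-(kp_m+jp_{m-1})\bigr),
\]
which transfers the Diophantine inequality for $\alpha_m$ back to the one for $\alpha$, and then bound $|K|=|kq_m+jq_{m-1}|$ by a constant times $1+|k|$ via a case split. The only difference is the case split itself: the paper normalizes $|\phi_p-\phi_q|\le 2$ and splits on $|l|\le 2|k|+3$ versus $|l|>2|k|+3$, whereas you split on $|\Delta|<1/2$ versus $|\Delta|\ge 1/2$. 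Your split is a bit cleaner because it absorbs the large factor $\beta_{m-1}^{-1}$ into the constant $C_m$ directly, while the paper's ``trivial'' case $|l|>2|k|+3$ as written seems to drop the $\beta_{m-1}^{-1}$ factor in front of $(\phi_t-\phi_{\tilde t})$; your formulation avoids this slip. Note also that in both arguments the exponent $\nu$ is preserved and only the constant $\chi'$ depends on $m$, which is all that is needed.
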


\begin{proof}
Without loss of generality, we assume
\[|\phi_t-\phi_{\widetilde{t}}|\leq 2,\quad t\neq \widetilde{t}\in \{1,\cdots, n\}.\]
We consider two cases. If  $|l|> 2|k|+3$ and  $t\neq \widetilde{t}\in\{1,\cdots,n\}$ we have
\begin{eqnarray*}
|k\alpha-(-1)^m\beta_{m-1}^{-1}(\phi_t-\phi_{\widetilde{t}})-l|\geq |l|-|k|-2 \geq
2|k|+3-|k|-2=|k|+1.
\end{eqnarray*}
Let  $|l|\le 2|k|+3$ and $t\neq \widetilde{t}\in\{1,\cdots,n\}$. 
There exist $\sigma,\nu>0$, such that for any $k,l\in \Z$ and
$t\neq \widetilde{t}\in \{1,\cdots, n\}$
\begin{eqnarray*}
|k\alpha-(\phi_t-\phi_{\widetilde{t}})-l|\geq \frac{\sigma}{(1+|k|)^\nu}
\end{eqnarray*}
and then
\begin{eqnarray*}
&&|k\alpha_m-(-1)^m\beta_{m-1}^{-1}(\phi_t-\phi_{\widetilde{t}})-l|\\
&&=\beta_{m-1}^{-1}|(-1)^m k\beta_{m}-(\phi_t-\phi_{\widetilde{t}})-(-1)^ml\beta_{m-1}|\\
&&=\beta_{m-1}^{-1}|(q_{m} \alpha-p_{m})k-(\phi_t-\phi_{\widetilde{t}})+(q_{m-1} \alpha-p_{m-1})l|\\
&&=\beta_{m-1}^{-1}|(q_m k+q_{m-1}
l)\alpha-(\phi_t-\phi_{\widetilde{t}})-(p_m+p_{m-1})|\\
 &&\geq\frac{\beta_{m-1}^{-1}\sigma}{(1+q_m |k|+q_{m-1}|l|)^\nu}.
\end{eqnarray*}
Since $|l|\le 2|k|+3$ and  $t\neq \widetilde{t}\in\{1,\cdots,n\}$ we have
\begin{eqnarray*}
&&|k\alpha_m-(-1)^m\beta_{m-1}^{-1}(\phi_t-\phi_{\widetilde{t}})-l|\\
 &&\geq\frac{\beta_{m-1}^{-1}\sigma}{(1+q_m |k|+q_{m-1}|l|)^\nu}\\
 &&\geq\frac{\beta_{m-1}^{-1}\sigma}{(1+q_m |k|+2q_{m}|k|+3q_{m})^\nu}\\
 &&\geq\frac{\beta_{m-1}^{-1}\sigma}{(4q_m)^\nu (|k|+ 1)^\nu}.
\end{eqnarray*}

So there exists $\sigma_m>0$, such that for any $k,l\in \Z$ and
$t\neq \widetilde{t}\in\{1,\cdots,n\}$
\begin{eqnarray*}
|k\alpha-(-1)^m\beta_{m-1}^{-1}(\phi_t-\phi_{\widetilde{t}})-l|\geq \frac{\sigma_m}{(1+|k|)^\nu}.
\end{eqnarray*} 
\end{proof}

Now we go back to the proof of Theorem $\ref{Gloabl}$. Lemma \ref{measconj} , \ref{Renormalized Condition} allows us to
apply Theorem $\ref{Local}$.  For $j$ is sufficiently large, there
exists $B_j$ in   $\mathcal{G}^\rho(\T,U(n))$ satisfying \[B_j(\theta
)=G_j(\theta)\] for a.e. $\theta \in
\T$ . Since $E_j$ and $T_j$ are analytic,   
 \[\widetilde{B}_j(\theta
):=T_j(\beta_{m_j-1}^{-1}\theta)^{-1}B_j(\beta_{m_j-1}^{-1}\theta)E_j(\beta_{m_j-1}^{-1}\theta)^{-1}\]
is in  $\mathcal{G}^\rho(\R,U(n))$. Moreover, 
\begin{eqnarray*}
\widetilde{B}_j(\theta
)=T_j(\beta_{m_j-1}^{-1}\theta)^{-1}G_j(\beta_{m_j-1}^{-1}\theta)E_j(\beta_{m_j-1}^{-1}\theta)^{-1}=B(\theta )
\end{eqnarray*}
for a.e. $\theta
\in \R$. Recall that  $B$ is $\Z-$periodic for  a.e. $\theta
\in \R$, so $\widetilde{B}_j$ is also  $\Z-$periodic  for  a.e. $\theta
\in \R$ and  it is   then $\Z-$periodic  for  all $\theta
\in \R$ (thanks to the continuity of $\widetilde{B}_j$). 

By assumption, for a.e.
$\theta \in \T$,
\[B(\theta+\alpha)^{-1}A(\theta)
B(\theta)= C,\]  hence, for a.e.
$\theta \in \T$,
\[\widetilde{B}_j(\theta +\alpha)^{-1}A(\theta ) \widetilde{B}_j(\theta
)=C,\] which
implies that for all $\theta \in \T$ 
\[\widetilde{B}_j(\theta +\alpha)^{-1}A (\theta )
\widetilde{B}_j(\theta )=C\]  
(thanks to the continuity of $\widetilde{B}_j$). This completes  the proof of Theorem $\ref{Gloabl}$. $\Box$\\

Xuanji Hou\\
 School of Mathematics and
Statistics,
 Central China Normal University, \\
Wuhan 430079, P. R.
China. \\
Email: hxj@mail.ccnu.edu.cn\\

 Georgi Popov\\
 University de Nantes, Laboratoire de mathematiques Jean Leray,\\
CNRS: UMR 6629, France, \\
Email: georgi.popov@univ-nantes.fr

\end{document}